\newtheorem{theorem}{Theorem}[section]
\newtheorem{lemma}[theorem]{Lemma}
\newtheorem{corollary}[theorem]{Corollary}
\theoremstyle{definition}
\newtheorem{definition}[theorem]{Definition}
\theoremstyle{remark}
\newtheorem{remark}[theorem]{Remark}
\begin{document}
	\title[$N$-Body Dielectric Spheres Problem. Part I.]{An Integral Equation Formulation of the $N$-Body Dielectric Spheres Problem. Part I: Numerical Analysis}
	%
	\author{$\text{Muhammad Hassan}^{\dagger}$}
	\address[$\dagger$]{Center for Computational Engineering Science, Department of Mathematics, RWTH Aachen University, Germany}
	\email{hassan@mathcces.rwth-aachen.de}
	
	\author{$\text{Benjamin Stamm}^{\dagger}$}
	\email{stamm@mathcces.rwth-aachen.de}
	\date{\today}
	\begin{abstract} 
		In this article, we analyse an integral equation of the second kind that represents the solution of $N$ interacting dielectric spherical particles undergoing mutual polarisation. A traditional analysis can not quantify the scaling of the stability constants- and thus the approximation error- with respect to the number $N$ of involved dielectric spheres. We develop a new a priori error analysis that demonstrates $N$-independent stability of the continuous and discrete formulations of the integral equation. Consequently, we obtain convergence rates that are independent of $N$.
	\end{abstract}
	%
	%
	\subjclass{65N12, 65N15, 65N35,  65R20}
	\keywords{Boundary Integral Equations, Numerical Analysis, Error Analysis, $N$-body Problem, Polarisation}
	\maketitle
	\section{Introduction}
	
	The so-called \emph{$N$-body problem} is a general term used to describe a vast category of physical problems involving the interaction of a large number of objects. Such problems arise in a variety of contexts in fields as diverse as quantum mechanics, molecular dynamics, astrophysics and electrostatics. The origin of the $N$-body problem lies in the Principia Mathematica wherein Newton considered the motion of celestial bodies. \cite{newton1934principia}. Starting with the work of {Henri Poincar\'e} \cite{poincare}, which incidentally led to the development of chaos theory, a significant amount of evidence has been accumulated that obtaining an analytic solution to the $N$-body problem in a tractable manner is not possible (see, e.g., \cite{DongWang, Sundman}). As a consequence, there has been a great deal of interest in developing numerical methods that can efficiently compute approximate solutions to the $N$-body problem. An important benchmark to assess the quality of any such numerical method has been its ability to obtain a linear scaling, i.e., given a system composed of $N$ interacting objects, to achieve time and computational complexity of order $\mathcal{O}(N)$. Attempts to achieve this benchmark have led to the development of extremely efficient numerical algorithms such as fast multipole (FMM) and particle mesh methods, which have been applied very successfully to a variety of $N$-body problems (see, e.g., \cite{greengard2, greengard1} for an explanation of the FMM and \cite{hockney} for particle mesh methods). \vspace{2mm}
	
	In the discipline of chemical physics, the interactions between charged particles in concentrated colloidal solutions (see, e.g., \cite{Colloid}) and Coulombic crystals (see, e.g., \cite{Crystal}), or the phenomena of electrostatic self-assembly (see, e.g., \cite{Self}) and super lattices (see, e.g., \cite{Lattice}) are all examples of $N$-body problems in electrostatics, and an accurate description of the electrostatic forces between the interacting particles is necessary in order to understand the physics underlying each of these phenomena. Until quite recently however, the state-of-the-art for the computation of the electrostatic forces between a large number of charged particles was quite under-developed. Most results in the literature relied on so-called image charge methods or multipole expansion approaches (see, e.g., \cite{image1,image2, image3} for the former, \cite{multipole1,lotan2006, multipole2} for the latter and \cite{barros2014efficient, freed2014perturbative} for other approches). The key deficiency of such numerical methods is that they have often not been formulated in a manner which allows a systematic numerical analysis of the algorithm. Recently, in \cite{lindgren2018}, the authors proposed a computational method based on a Galerkin discretisation of an integral equation formulation of the second kind for the induced surface charges resulting on a large number of dielectric spheres of varying radii and dielectric constants, embedded in a homogenous dielectric medium and undergoing mutual polarisation. Numerical experiments indicated that this algorithm displayed some interesting behaviour:
	\begin{enumerate}
		\item {For a fixed number of degrees of freedom per sphere}, the average error on each sphere remained bounded when increasing the number of dielectric spheres;
		\item For a fixed number of dielectric spheres, the total error decayed exponentially when increasing the degrees of freedom per sphere;
		\item Through the use of the FMM, the numerical method achieved computational complexity that scaled linearly with respect to the number of dielectric spheres.
	\end{enumerate}
	
	\vspace{2mm}
	
	Points (1) and (2) deal with the accuracy of the method and point (3) gives a measure of the computational scalability of the numerical algorithm. Taken together, these numerical observations suggest that the method proposed in \cite{lindgren2018} is \emph{linear scaling in accuracy}, i.e., in order to obtain an approximate solution with fixed average (the total error scaled by $N$) or relative error, the computational cost of the algorithm scales as $\mathcal{O}(N)$. Consequently, the integral equation-based approach proposed by Lindgren at al. is a significant advance in the state-of-the-art for the computation of the electrostatic interactions between a large number of charged particles undergoing mutual polarisation. \vspace{2mm}

	It is now natural to ask if one can provide a rigorous mathematical justification for the behaviour exhibited by the numerical method in points (1)-(3). More precisely, can one prove that the average or relative error is bounded independent of the number of objects in the problem? And that the computational complexity of the numerical method proposed in \cite{lindgren2018} scales linearly with respect to the number of objects in the problem? The current article is the first in a series of two and focuses on the numerical analysis of the algorithm introduced in \cite{lindgren2018} in order to provide a mathematically sound answer to the first question. More specifically, we prove that
	
	\begin{enumerate}[label=(\alph*)]
		\item {For any fixed geometrical configuration of non-intersecting spherical dielectric particles,} the integral equation formulation of the second kind proposed in \cite{lindgren2018} that describes the induced surface charges resulting on these dielectric spheres undergoing mutual polarisation is well-posed;
		\item {For any fixed geometrical configuration of non-intersecting spherical dielectric particles,} the Galerkin discretisation of this second-kind integral equation is also well-posed;
		\item { For any fixed geometrical configuration of non-intersecting spherical dielectric particles,} there exists an upper bound on the relative error of the approximate solution that does not \underline{explicitly} depend on the number $N$ of dielectric spheres in the system. {Consequently, we can deduce $N$-independent error estimates for any family of geometrical configurations that satisfies certain geometrical assumptions which are described in detail later;}
		\item { For any fixed geometrical configuration of non-intersecting spherical dielectric particles,} given certain assumptions on the regularity of the exact solution, the total error of the approximate solution decays exponentially as the degrees of freedom per sphere are increased.
	\end{enumerate}
	
	A detailed complexity analysis of this numerical method which provides a mathematically sound answer to the second question is the subject of the contribution \cite{Hassan2}. \vspace{2mm}
	
	{$N$-body problems have been widely studied in the literature in the context of electromagnetic or acoustic scattering by a large number of obstacles (see, e.g., \cite{Scattering_New4, Scattering_New2, Scattering_New3, Scattering_New5, Scattering_New6, Scattering_New7, Scattering_New1}). Such scattering problems are significantly more complicated to analyse than the electrostatic interaction problem we consider here because the underlying differential operator in wave phenomena is indefinite, which causes many technical difficulties. Consequently, it is already a significant challenge to design a computationally efficient numerical algorithm that is stable with respect to a large regime of wave numbers and obstacle sizes and placements, let alone perform a comprehensive numerical analysis of the algorithm and derive rigorous convergence rates. Thus, articles such as~\cite{Scattering_New4, Scattering_New6, Scattering_New7} quoted above focus mostly on the design of efficient computational methods and use numerical tests to validate their proposals. On the other hand while articles such as \cite{Costabel1985, Scattering_New5} do establish convergence rates with respect to the degrees of freedom, these rates are not shown to be independent of the number of obstacles $N$. Incidentally, several of the articles mentioned above propose algorithms that are based on integral equations of the first kind (see \cite{Scattering_New2, Scattering_New3, Costabel1985, Scattering_New1} quoted above). This framework, while suitable for numerical analysis, suffers from a well-known problem: Galerkin discretisations of integral equations of the first kind typically lead to dense, ill-conditioned solution matrices which causes slow convergence of the iterative solvers. As a remedy, several of these articles have proposed efficient preconditioners (see e.g., the article \cite{Scattering_New2}) but the introduction of preconditioning techniques further complicates questions of scalability. This computational deficiency is precisely why Lindgren et al.\cite{lindgren2018} proposed and why we analyse an integral equation formulation of the \textit{second~kind}.} \vspace{2mm}
	
	There is an abundant literature on integral equations of the second kind (see, e.g., the books \cite{Wendland1, Schwab}, or the articles \cite{2ndkind2, 2ndkind1, Elschner1, Elschner2, 2ndkind3, Steinbach1}). In particular, the well-posedness theory of second kind integral equations is well established, and it is understood that Galerkin discretisations of second kind integral equations typically leads to well conditioned solution matrices. As a consequence, second kind integral equations have been constructed for the solutions of a variety of problems. More recently, such formulations have also been proposed for problems very similar to the $N$-body dielectric sphere problem including, for instance, acoustic and electromagnetic scattering by composite structures (see, e.g., \cite{Xavier1, Xavier2, Rokhlin, mullerEM}), and multi subdomain diffusion \cite{XavierDiff}. The key mathematical deficiency of such second kind integral formulations is that stability estimates- and thus also error estimates- are often difficult to obtain except in certain special cases.  \vspace{2mm}
	
	Therefore, obtaining stability and error estimates for our problem using the existing well-posedness analysis in the literature is not straight forward. To make matters worse, most integral equations are applied in situations where the size of the domain is \emph{fixed} so the existing analysis in the literature focuses on establishing the existence of stability and continuity constants of the boundary integral operators that are independent of the degrees of freedom, such as the mesh width or the boundary element size. Since the stability and continuity constants appear in the error estimates, it is crucial to establish that these constants are explicitly independent of \emph{the number of objects in the problem setting}. Unfortunately, this is not a priori clear and in some cases is not even true for the classical well-posedness analysis. Consequently, in order to prove points (c)-(d), we have had to introduce \emph{a new well-posedness analysis} for establishing points (a)-(b). All these issues are discussed in more detail in Section 2.5. \vspace{2mm}
	
	The remainder of this article is organised as follows. In Section 2, we describe the problem setting, state and discuss our main results, and consider the limitations of the existing classical analysis of second kind integral equations in the literature. Section 3 then contains numerical experiments that validate our theoretical results. In Section 4, we state intermediate lemmas and the proofs of our main results. Finally, in Section 5, we present our conclusion and discuss future directions of research.
	
	\section{Problem Setting and Main Results}\label{sec:2}
	Throughout this article, we will use standard results and notation from the theory of integral equations. We follow the notation of, and use as the primary reference, the book of Sauter and Schwab on boundary elements methods \cite{Schwab}.
	
	\subsection{Setting and Notation}\label{sec:2a}
	{To begin with we would like to describe precisely the types of geometrical situations we will consider in this article. As indicated in the introduction, we are interested in studying geometrical configurations that are the unions of an arbitrary number $N$ of non-intersecting open balls with varying radii in three dimensions. However, in order to be completely rigorous in our claim of $N$-independent error estimates, we must impose certain assumptions on the types of geometries we consider. To this end, let $\mathcal{I}$ denote a countable indexing set. We consider a so-called family of geometries~$\{\Omega_{\mathcal{F}}\}_{\mathcal{F} \in \mathcal{I}}$. Each element $\Omega_{\mathcal{F}} \subset \mathbb{R}^3$ in this family is the (set) union of a fixed number of non-intersecting open balls of varying locations and radii with associated dielectric constants, and therefore represents a particular physical geometric situation. It is easy to see that each element $\Omega_{\mathcal{F}}$ of this family of geometries is uniquely determined by the following four parameters:
		\begin{itemize}
			\item A non-zero number $N_{\mathcal{F}} \in \mathbb{N}$, which represents the total number of dielectric spherical particles that compose the geometry $\Omega_{\mathcal{F}}$;
			\item A collection of points $\{\bold{x}^{\mathcal{F}}_i\}_{i=1}^{N_{\mathcal{F}}} \in \mathbb{R}^3$, which represent the centres of the spherical particles composing the geometry $\Omega_{\mathcal{F}}$;
			\item A collection of positive real numbers $\{r_i^{\mathcal{F}}\}_{i=1}^{N_{\mathcal{F}}} \in \mathbb{R}$, which represent the radii of the spherical particles composing the geometry $\Omega_{\mathcal{F}}$;
			\item A collection of positive real numbers $\{\kappa^{\mathcal{F}}_i\}_{i=0}^N \in \mathbb{R}$. Here, $\kappa^{\mathcal{F}}_0$ denotes the dielectric constant of the external medium while $\{\kappa^{\mathcal{F}}_i\}_{i=1}^N$ represent the dielectric constants of each dielectric sphere.
		\end{itemize}
		
		Indeed, using the first three parameters we can define the open balls $\Omega^\mathcal{F}_i := \mathcal{B}_{r_i}(\bold{x}_i) \subset \mathbb{R}^3$, $i \in \{1, \ldots, N_{\mathcal{F}}\}$ which represent the spherical dielectric particles composing the geometry $\Omega_{\mathcal{F}}$, i.e., $\Omega_{\mathcal{F}}= \cup_{i=1}^{N_\mathcal{F}} \Omega_i^{\mathcal{F}}$. Moreover, the fourth parameter $\{\kappa^{\mathcal{F}}_i\}_{i=0}^N$ denotes the dielectric constants associated with this geometry.

		\vspace{3mm}
		We now impose the following three important assumptions on the above parameters:\\
		
		\begin{enumerate}
			
			\item[\textbf{A1:}] \textbf{[Uniformly bounded radii]} There exist constants $r^{\infty}_->0$ and $r^{\infty}_+>0$ such that 
			\begin{align*}
			\inf_{\mathcal{F} \in \mathcal{I}}\, \min_{i=1, \ldots, N_{\mathcal{F}}} r^{\mathcal{F}}_i > r^{\infty}_- \quad \text{and} \quad \sup_{\mathcal{F} \in \mathcal{I}}\, \max_{i=1, \ldots, N_{\mathcal{F}}} r^{\mathcal{F}}_i < r^{\infty}_+.
			\end{align*}
			
			\item[\textbf{A2:}] \textbf{[Uniformly bounded minimal separation]} There exists a constant $\epsilon^{\infty} > 0$ such that 
			\begin{align*}
			\inf_{\mathcal{F} \in \mathcal{I}}\, \min_{\substack{i, j=1, \ldots, N_{\mathcal{F} } \\ i \neq j}} \big(\vert \bold{x}_i^{\mathcal{F}} -\bold{x}_j^{\mathcal{F}}\vert - r^{\mathcal{F}}_i -r^{\mathcal{F}}_j\big)> \epsilon^{\infty}.
			\end{align*}
			
			\item[\textbf{A3:}] \textbf{[Uniformly bounded dielectric constants]} There exist constants $\kappa^{\infty}_->0$ and $\kappa^{\infty}_+>0$ such that 
			\begin{align*}
			\inf_{\mathcal{F} \in \mathcal{I}} \,\min_{i=1, \ldots, N_{\mathcal{F}}} \kappa^{\mathcal{F}}> \kappa^{\infty}_- \quad \text{and} \quad \sup_{\mathcal{F} \in \mathcal{I}} \, \max_{i=1, \ldots, N_{\mathcal{F}}} \kappa^{\mathcal{F}} < \kappa^{\infty}_+.
			\end{align*}
			
		\end{enumerate}
		In other words we assume that the family of geometries $\{\Omega_{\mathcal{F}}\}_{\mathcal{F} \in \mathcal{I}}$ we consider in this article describe physical situations where the radii of the dielectric spherical particles, the minimum inter-sphere separation distance and the dielectric constants are all uniformly bounded. These assumptions are necessary because, as we will show, the error estimates we derive, while explicitly independent of the number of dielectric particles $N_\mathcal{F}$, do depend on other geometrical parameters, and we would thus like to avoid situations where these geometric parameters degrade with increasing $N_\mathcal{F}$. We remark that from a practical perspective, these assumptions do not greatly limit the scope of our results. Indeed, in many physical applications one typically considers non-metallic dielectric particles which neither have vanishing or exploding dielectric constants nor vanishing or exploding radii (see, e.g., \cite{lindgren_dynamic, lee2015direct, Crystal, Self, soh2014charging}). \\ 
		
		In the remainder of this article, we will consider a fixed geometry from the family of geometries $\{\Omega_{\mathcal{F}}\}_{\mathcal{F} \in \mathcal{I}}$ satisfying the assumptions \textbf{A1)-A3)}. To avoid bulky notation we will drop the superscript and subscript $\mathcal{F}$ and denote this geometry by $\Omega^-$. The geometry is constructed as follows: Let $N \in \mathbb{N}$, let $\{\bold{x}_i\}_{i=1}^N \in \mathbb{R}^3$ be a collection of points in $\mathbb{R}^3$ and let $\{r_i\}_{i=1}^N \in \mathbb{R}$ be a collection of positive real numbers, and for each $i \in \{1, \ldots, N\}$ let $\Omega_i := \mathcal{B}_{r_i}(\bold{x}_i) \subset \mathbb{R}^3$ be the open ball of radius $r_i >0$ centred at the point $\bold{x}_i$. Then $\Omega^- \subset \mathbb{R}^3$ is defined as $\Omega^-:= \cup_{i=1}^N \Omega_i$. Furthermore, we define $\Omega^+:= \mathbb{R}^3 \setminus \overline{\Omega^-}$, and we write $\partial \Omega$ for the boundary of $\Omega^-$ and {$\eta(\bold{x})$ for the unit normal vector at $\bold{x} \in \partial \Omega$ pointing towards the exterior of $\Omega^-$}.}\\

	Next, let $\{\kappa_i\}_{i=0}^N \in \mathbb{R}$ be a collection of positive real numbers and let the function $\kappa \colon \partial \Omega \rightarrow \mathbb{R}$ be defined as $\kappa (\bold{x}):= \kappa_i ~~~\text{ for } \bold{x} \in \partial \Omega_i.$ Thus, $\kappa$ is a piecewise constant function that takes constant positive values on the boundary of each open ball $\partial \Omega_i, ~ i=1, \ldots, N$. Physically, this function represents the dielectric constant of each of these open balls while the constant $\kappa_0$ represents the dielectric constant of the medium. We observe that by definition for each $i \in \{1, \ldots, N\}$, either $\frac{\kappa-\kappa_0}{\kappa_0}\vert_{\partial \Omega_i} \geq 0$ or $\frac{\kappa-\kappa_0}{\kappa_0}\vert_{\partial \Omega_i} \in (-1, 0]$. \\
	
	Following standard practice, we write $H^1(\Omega^-):= \left\{u \in L^2(\Omega^-) \colon \nabla u \in L^2(\Omega^-)\right\}$ with the norm $\Vert u \Vert^2_{H^1(\Omega^-)}:= \sum_{i=1}^N\Vert u \Vert^2_{L^2(\Omega_i)} + \Vert \nabla u \Vert^2_{L^2(\Omega_i)}$. Moreover, inspired by the definition in \cite[Section 2.9.2.4]{Schwab} we define the weighted Sobolev space $H^1(\Omega^+)$ as the completion of $C^{\infty}_{\text{comp}}(\Omega^+)$ with respect to the norm $\Vert u \Vert^2_{H^1(\Omega^+)}:= \int_{\Omega^+} \frac{\vert v(\bold{x})\vert^2}{1 + \vert \bold{x}\vert^2}\, d\bold{x}+ \Vert \nabla v\Vert^2_{L^2(\Omega^+)}$. Note that functions that satisfy the decay conditions associated with exterior Laplace problems will belong to this space.\\


	Next, we denote by $H^{\frac{1}{2}}(\partial \Omega)$ the Sobolev space of order $\frac{1}{2}$ equipped with the Sobolev-Slobodeckij norm $\Vert \lambda \Vert^2_{H^{\frac{1}{2}}(\partial \Omega)}:=\sum_{i=1}^N \Vert \lambda\Vert^2_{L^2(\partial \Omega_i)} + \int_{\partial \Omega_i} \int_{\partial \Omega_i} \frac{\vert\lambda(\bold{x})-\lambda(\bold{y})\vert^2}{\vert \bold{x} - \bold{y}\vert^3 } \, d\bold{x} d\bold{y}$. Notice that we have chosen to define $\Vert \cdot \Vert^2_{H^{\frac{1}{2}}(\partial \Omega)}$ as a sum of local norms on each sphere. Moreover, we define $H^{-\frac{1}{2}}(\partial \Omega):=\left(H^{\frac{1}{2}}(\partial \Omega)\right)^*$ and we equip this Sobolev space with the canonical dual norm 
	\begin{align*}
	\Vert \sigma \Vert_{H^{-\frac{1}{2}}(\partial \Omega)}:= \sup_{0 \neq \psi \in {H}^{\frac{1}{2}}(\partial \Omega)} \frac{\langle \sigma, \psi \rangle_{H^{-\frac{1}{2}}(\partial \Omega) \times H^{\frac{1}{2}}(\partial \Omega)}}{\Vert \psi \Vert_{H^{\frac{1}{2}} (\partial \Omega)}} \qquad \forall \sigma \in H^{-\frac{1}{2}}(\partial \Omega).
	\end{align*}
	We remark that using the Lebesgue space $L^2(\partial \Omega)$ as a pivot space for $H^{\frac{1}{2}}(\partial \Omega)$ and $H^{-\frac{1}{2}}(\partial \Omega)$, we obtain that the duality pairing $\langle \cdot, \cdot \rangle_{H^{-\frac{1}{2}}(\partial \Omega) \times H^{\frac{1}{2}}(\partial \Omega)}$ reduces to the usual $L^2$ inner product $( \cdot, \cdot )_{L^2(\partial \Omega)}$ for sufficiently regular functions (see, e.g., \cite[Chapter 2]{Schwab}).	{For the sake of brevity, when there is no possibility of confusion, we will use the notation $\langle \cdot, \cdot \rangle_{\partial \Omega}$ to denote the duality pairing $\langle \cdot, \cdot \rangle_{H^{-\frac{1}{2}}(\partial \Omega) \times H^{\frac{1}{2}}(\partial \Omega)}$.} \\
	
	We introduce $\gamma^- \colon H^1(\Omega^{-}) \rightarrow H^{\frac{1}{2}}(\partial \Omega)$ and $\gamma^+ \colon H^1(\Omega^+) \rightarrow H^{\frac{1}{2}}(\partial \Omega)$ as the continuous, linear and surjective interior and exterior Dirichlet trace operators respectively (see, for example, \cite[Theorem 2.6.8, Theorem 2.6.11]{Schwab} or \cite[Theorem 3.38]{McLean}). Moreover, for $s \in \{+, -\}$, we define the closed subspace $\mathbb{H}(\Omega^s):= 
	\{u \in H^{1}(\Omega^s) \colon \Delta u =0 \text{ in } \Omega^s\},$
	and we write $\gamma^-_N \colon \mathbb{H}(\Omega^-) \rightarrow H^{-\frac{1}{2}}(\partial \Omega)$ and $\gamma^+_N \colon \mathbb{H}(\Omega^+) \rightarrow H^{-\frac{1}{2}}(\partial \Omega)$ for the interior and exterior Neumann trace operator respectively (see \cite[Theorem 2.8.3]{Schwab} for precise conventions). Note that both the interior and exterior Dirichlet and Neumann trace operators can be defined analogously for functions of appropriate regularity defined on $\Omega^- \cup \Omega^+$ or $\mathbb{R}^3$. {In addition, we introduce the so-called (interior) Dirichlet-to-Neumann map $\text{DtN} \colon H^{\frac{1}{2}}(\partial \Omega) \rightarrow H^{-\frac{1}{2}}(\partial \Omega)$ as follows: Given any $\lambda \in H^{\frac{1}{2}}(\partial \Omega)$, let $u_{\lambda} \in \mathbb{H}(\Omega^-)$ denote the unique harmonic function in $H^1(\Omega^-)$ such that $\gamma^- u_{\lambda} = \lambda$. Then we define $\text{DtN}\lambda:= \gamma^-_{N} u_{\lambda}$.} Note that local Dirichlet-to-Neumann maps can be defined analogously on each sphere $\partial \Omega_i, ~i=1, \ldots, N$.\\


	{Next, for each $\nu \in H^{-\frac{1}{2}}(\partial \Omega),~ \lambda \in H^{\frac{1}{2}}(\partial \Omega)$ and all $\bold{x} \in \mathbb{R}^3 \setminus \partial \Omega$ we define the functions
		\begin{align*}
		\mathcal{S}(\nu)(\bold{x})&:=\int_{\partial \Omega} \frac{\nu(\bold{y})}{4\pi\vert \bold{x}- \bold{y}\vert}\, d \bold{y},\\
		\mathcal{D}(\lambda)(\bold{x})&:=\int_{\partial \Omega} \lambda(\bold{y}) \eta(\bold{y}) \cdot\nabla_{\bold{y}}\frac{1}{4\pi\vert \bold{x}- \bold{y}\vert}\, d \bold{y}.
		\end{align*}
		
		The mappings $\mathcal{S}$ and $\mathcal{D}$ are known as the single layer and double layer potentials respectively. It can be shown (see, e.g., \cite[Chapter 2]{Schwab}) that $\mathcal{S}$ is a bounded linear operator from $H^{-\frac{1}{2}}(\partial \Omega)$ to $H^{1}_{\text{loc}}\left(\mathbb{R}^3\right)$ and $\mathcal{D}$ is a bounded linear operator from $H^{\frac{1}{2}}(\partial \Omega)$ to $H^{1}_{\text{loc}}\left(\mathbb{R}^3 \setminus \partial \Omega\right)$, and both $\mathcal{S}$ and $\mathcal{D}$ map into the space of harmonic functions on the complement $\mathbb{R}^3 \setminus \partial \Omega$ of the boundary.}\\
	
	{As the final step, we define the following linear bounded boundary integral operators:
		\begin{align*}
		\mathcal{V}&:= \hphantom{-}\big(\gamma^- \circ \mathcal{S}\big) \hspace{0.0cm}\colon \hspace{0.0cm}H^{-\frac{1}{2}}(\partial \Omega) \rightarrow H^{\frac{1}{2}}(\partial \Omega), \hspace{1.0cm} \mathcal{K}^{\hphantom{*}}:= \Big(\gamma^- \circ\mathcal{D} + \frac{1}{2}I\Big)\colon H^{\frac{1}{2}}(\partial \Omega) \rightarrow H^{\frac{1}{2}}(\partial \Omega),\\
		\mathcal{W}&:= -\big(\gamma_N^- \circ\mathcal{D}\big) \hspace{0.0cm}\colon \hspace{0.0cm}H^{\frac{1}{2}}(\partial \Omega) \rightarrow H^{-\frac{1}{2}}(\partial \Omega),
		\hspace{1cm}
		\mathcal{K}^*:= \Big(\gamma_N^- \circ\mathcal{S} - \frac{1}{2}I\Big)\colon H^{-\frac{1}{2}}(\partial \Omega) \rightarrow H^{-\frac{1}{2}}(\partial \Omega).
		\end{align*}
		Here $I$ denotes the identity operator on the relevant trace space. The mapping $\mathcal{V}$ is known as the single layer boundary operator, the mapping $\mathcal{K}$ is known as the double layer boundary operator, the mapping $\mathcal{K}^*$ is known as the adjoint double layer boundary operator and the mapping $\mathcal{W}$ is known as the hypersingular boundary operator. Detailed definitions and a discussion of the properties of these boundary integral operators can be found in \cite[Chapter 3]{Schwab}. We state three properties in particular that will be used in the sequel.}\\
	
	{{\textbf{Property 1:} \cite[Theorem 3.5.3]{Schwab} The single layer boundary operator $\mathcal{V} \colon H^{-\frac{1}{2}}(\partial \Omega) \rightarrow H^{\frac{1}{2}}(\partial \Omega)$ is Hermitian and coercive, i.e., there exists a constant $c_{\mathcal{V}} > 0$ such that for all functions $\sigma \in H^{-\frac{1}{2}}(\partial \Omega)$ it holds that}
		\begin{align*}
		\langle \sigma, \mathcal{V}\sigma \rangle_{\partial \Omega} \geq c_{\mathcal{V}} \Vert \sigma\Vert^2_{H^{-\frac{1}{2}}(\partial \Omega)}.
		\end{align*}
		
		This implies in particular that the inverse $\mathcal{V}^{-1} \colon H^{\frac{1}{2}}(\partial \Omega) \rightarrow H^{-\frac{1}{2}}(\partial \Omega)$ is also a Hermitian, coercive and bounded linear operator. Consequently, $\mathcal{V}$ induces a norm $\Vert \cdot \Vert_{\mathcal{V}}$ and associated inner product on $H^{-\frac{1}{2}}(\partial \Omega)$ and the inverse $\mathcal{V}^{-1}$ induces a norm $\Vert \cdot \Vert_{\mathcal{V}^{-1}}$ and associated inner product on $H^{\frac{1}{2}}(\partial \Omega)$.} {We emphasise here that while the coercivity constant $c_{\mathcal{V}}$ of the single layer boundary operator a priori depends on the geometry $\Omega^-$, the independence of $c_{\mathcal{V}}$ with respect to the number of open balls $N$ in the system is a key point in the present analysis and will be the subject of further discussion in Section \ref{sec:4.1} (see, in particular, Lemmas \ref{lem:Single} and \ref{lem:Hassan2})}.\\
	
	{	{\textbf{Property 2:}\cite[Theorem 3.5.3]{Schwab}} The hypersingular boundary operator $\mathcal{W} \colon H^{\frac{1}{2}}(\partial \Omega) \rightarrow H^{\frac{1}{2}}(\partial \Omega)$ is Hermitian, non-negative and coercive on a subspace of $H^{\frac{1}{2}}(\partial \Omega)$, i.e., there exists a constant $c_{\mathcal{W}} > 0$ such that for all functions $\lambda \in H^{\frac{1}{2}}(\partial \Omega)$ with $\sum_{i=1}^N\left\vert\int_{\partial \Omega_i} \lambda(\bold{x})\, d \bold{x}\right\vert=0$, it holds that
		\begin{align*}
		\langle \mathcal{W}\lambda, \lambda \rangle_{\partial \Omega} \geq c_{\mathcal{W}} \Vert \lambda\Vert^2_{H^{\frac{1}{2}}(\partial \Omega)}.
		\end{align*}
		
		{\textbf{Property 3:}\cite[Theorem 3.8.7]{Schwab}} The coercivity constants of the single layer and hypersingular boundary operators satisfy $c_{\mathcal{V}} c_{\mathcal{W}} \leq \frac{1}{4}$. Therefore the constant
		\begin{align*}
		c_{\mathcal{K}}:= \frac{1}{2} + \sqrt{\frac{1}{4}- c_{\mathcal{V}}c_{\mathcal{W}} },
		\end{align*}
		is well-defined and $c_{\mathcal{K}} \in \big[\frac{1}{2}, 1\big)$.}\\

	We are now ready to state the problem we wish to analyse.\\
	
	\subsection{Abstract Dielectric Electrostatic Interaction Problem}~
	Let $K$ denote the Coulomb constant and let $\sigma_f \in H^{-\frac{1}{2}}(\partial \Omega)$ be arbitrary. For each $s\in \{+, -\}$ find a function $\Phi^s \in \mathbb{H}(\Omega^s)$ with the property that
	\begin{equation}\label{eq:3.2}
	\begin{split}
	\gamma^+ \Phi^+-\gamma^- \Phi^-&= 0 \hspace{1.61cm}\qquad ~~~~~~~\text{in } H^{\frac{1}{2}}(\partial \Omega),\\
	\kappa \gamma_N^- \Phi^-- \kappa_0 \gamma_N^+ \Phi^+&= 4 \pi K \sigma_f \qquad \hspace{5mm}~\text{in } H^{-\frac{1}{2}}(\partial \Omega),\\
	|\Phi^+(\bold{x})| &\leq C \vert \bold{x}\vert^{-1} \hspace{1.5mm}\qquad \text{for } |\bold{x}| \to \infty.
	\end{split}
	\end{equation}
	
	\begin{remark}
		We may assume without loss of generality that $K=1$. This is, for instance, true if one picks the CGS system of units.
	\end{remark}
	
	{\begin{remark}\label{rem:sigma_f}
			In the physics literature, $\sigma_f \in H^{-\frac{1}{2}}(\partial \Omega)$ is called the free charge and is a known quantity. The unknown function $\Phi^s \in \mathbb{H}(\Omega^s)$ is the electric potential that results after the polarisation of the free charge residing on the surface of the dielectric spheres $\partial \Omega_i, ~ i=1, \ldots, N$.
	\end{remark}}
	
	\begin{remark}
		The operator equation \eqref{eq:3.2} is very similar to the abstract transmission problem for second order elliptic PDEs. A detailed overview of the transmission problem can, for example, be found in \cite[Chapter 2.9]{Schwab}.
	\end{remark}
	
	From a practical perspective, the main difficulty in solving the transmission problem \eqref{eq:3.2} is the fact that this problem is posed on the unbounded domain $\mathbb{R}^3$. The usual approach in the literature to circumventing this difficulty is to appeal to the theory of integral equations and reformulate an operator equation posed on some domain $ \Omega^- \cup \Omega^+$, such as Equation \eqref{eq:3.2}, as a so-called \emph{boundary integral equation} (BIE) posed on the interface~$\partial \Omega$ (see, for example, \cite{McLean} or \cite{Schwab}). \\

	\noindent{\textbf{Integral Equation Formulation for the Induced Charges}}~
	
	Let $\sigma_f \in H^{-\frac{1}{2}}(\partial \Omega)$. Find $\nu \in H^{-\frac{1}{2}}(\partial \Omega)$ with the property that
	\begin{align}\label{eq:3.3a}
	\nu - \frac{\kappa_0-\kappa}{\kappa_0} (\text{DtN}\mathcal{V})\nu= \frac{4\pi}{\kappa_0}\sigma_f.
	\end{align}

	\begin{remark}
		From a physical point of view, the unknown $\nu \in H^{-\frac{1}{2}}(\partial \Omega)$ in the integral equation \eqref{eq:3.3a} is the \emph{induced} surface charge on each sphere $\partial \Omega_i, ~ i=1, \ldots, N$. 
	\end{remark}
	
	\begin{remark}\label{rem:trivial}
		Consider the setting of the integral equation \eqref{eq:3.3a}. Suppose there is some open ball $\Omega_j,~j~\in~\{1, \ldots, N\}$ such that $\kappa=\kappa_0$ on $\partial \Omega_j$. Then it follows that the induced surface charge $\nu_j$ on sphere $\partial \Omega_j$ is simply given by $\nu_j = \frac{4\pi}{\kappa_0} \sigma_{f, j}$ where $\sigma_{f, j}$ denotes the free charge on $\partial \Omega_j$. Consequently, throughout the remainder of this article, we will assume that $\kappa \neq \kappa_0$ on all the spheres. Note that physically, the situation $\kappa=\kappa_0$ on $\partial \Omega_j$ corresponds to no polarisation on the sphere $\partial \Omega_j$.
	\end{remark}
	
	The boundary integral equation \eqref{eq:3.3a} can be derived from the transmission problem \eqref{eq:3.2} using a single layer ansatz. Indeed, we have the following lemma:
	
	\begin{lemma}\label{lem:equivalence}
		Let $\bold{\Phi}:= (\Phi^-, \Phi^+) \in \mathbb{H}(\Omega^-) \times \mathbb{H}(\Omega^+)$ be a solution to the transmission problem \eqref{eq:3.2}. Then $\nu:= \mathcal{V}^{-1}\big(\gamma^- \Phi^-\big)$ is a solution to the BIE \eqref{eq:3.3a}. Conversely, let $\nu \in H^{-\frac{1}{2}}(\partial \Omega)$ be a solution to the BIE \eqref{eq:3.3a}. Then $ (\Phi^-, \Phi^+):=\big(\mathcal{S}\nu \vert_{\Omega^-}, \mathcal{S}\nu \vert_{\Omega^+}\big)$ is a solution to the transmission problem \eqref{eq:3.2}.
	\end{lemma}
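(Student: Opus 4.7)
The plan is to use the single-layer ansatz $\Phi^s = \mathcal{S}\nu|_{\Omega^s}$ and to pass between the two formulations using the standard jump relations. The backbone identity is that for any $\nu \in H^{-\frac{1}{2}}(\partial\Omega)$, the function $\mathcal{S}\nu|_{\Omega^-}$ is itself the unique harmonic extension in $\mathbb{H}(\Omega^-)$ of its own trace $\mathcal{V}\nu$, so by the definition of $\text{DtN}$ one has
\[
\text{DtN}(\mathcal{V}\nu) \;=\; \gamma_N^-\mathcal{S}\nu \;=\; \bigl(\tfrac{1}{2}I + \mathcal{K}^*\bigr)\nu.
\]
Together with the complementary exterior jump $\gamma_N^+\mathcal{S}\nu = \bigl(-\tfrac{1}{2}I + \mathcal{K}^*\bigr)\nu$, this rewrites the BIE \eqref{eq:3.3a} in the equivalent algebraic form $(\kappa-\kappa_0)\mathcal{K}^*\nu + \tfrac{1}{2}(\kappa+\kappa_0)\nu = 4\pi\sigma_f$, which is also what the second transmission condition in \eqref{eq:3.2} becomes after inserting the ansatz.

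For the BIE $\Rightarrow$ transmission direction, I would take $\nu$ solving \eqref{eq:3.3a}, define $\Phi^s := \mathcal{S}\nu|_{\Omega^s}$, check that $\Phi^s \in \mathbb{H}(\Omega^s)$ and the decay $|\Phi^+(\bold{x})| = O(|\bold{x}|^{-1})$ using the mapping properties of $\mathcal{S}$ recalled in Section~\ref{sec:2a} together with the asymptotic behaviour of the Newton kernel, obtain the first transmission condition from continuity of the single-layer trace, and derive the second from the algebraic reformulation above. For the transmission $\Rightarrow$ BIE direction, I would set $\nu := \mathcal{V}^{-1}(\gamma^-\Phi^-)$, which is meaningful since Property~1 ensures $\mathcal{V}$ is invertible. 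The central step is then to show $\Phi^s = \mathcal{S}\nu|_{\Omega^s}$: by construction $\gamma^-\mathcal{S}\nu = \mathcal{V}\nu = \gamma^-\Phi^-$, so uniqueness of the interior Dirichlet problem for harmonic functions in $\mathbb{H}(\Omega^-)$ gives $\Phi^- = \mathcal{S}\nu|_{\Omega^-}$; combining with the first transmission condition yields $\gamma^+\mathcal{S}\nu = \gamma^+\Phi^+$, and uniqueness of the exterior harmonic extension in $\mathbb{H}(\Omega^+)$ then gives $\Phi^+ = \mathcal{S}\nu|_{\Omega^+}$. The same algebraic reformulation subsequently converts the second transmission condition into \eqref{eq:3.3a}.

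The main delicate point is this representation step in the transmission $\Rightarrow$ BIE direction. The interior uniqueness is straightforward via coercivity of the Dirichlet form, but the exterior uniqueness is precisely where the decay hypothesis in the third line of \eqref{eq:3.2}, together with the weighted $H^1(\Omega^+)$ framework introduced in Section~\ref{sec:2a}, is essential, as it excludes nontrivial harmonic functions in $\Omega^+$ with vanishing trace and non-decaying behaviour at infinity. Once the single-layer representation of $\bold{\Phi}$ is secured, the remainder of the argument is routine algebra with the jump relations and mapping properties already collected in Section~\ref{sec:2a}.
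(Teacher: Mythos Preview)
Your proposal is correct and, for the BIE $\Rightarrow$ transmission direction, essentially coincides with the paper's argument: define $\Phi^s := \mathcal{S}\nu|_{\Omega^s}$, invoke the mapping properties of $\mathcal{S}$ for harmonicity, trace continuity, and decay, and recover the second interface condition from the jump relations combined with the algebraic rewriting $(\kappa-\kappa_0)\mathcal{K}^*\nu + \tfrac{1}{2}(\kappa+\kappa_0)\nu = 4\pi\sigma_f$, which is exactly the BIE \eqref{eq:3.3a} after using $\text{DtN}\,\mathcal{V} = \tfrac{1}{2}I + \mathcal{K}^*$.

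For the transmission $\Rightarrow$ BIE direction, you take a genuinely different route from the paper. The paper does \emph{not} first establish $\Phi^s = \mathcal{S}\nu|_{\Omega^s}$ via interior and exterior Dirichlet uniqueness; instead it applies Green's representation theorem directly to the pair $(\Phi^-,\Phi^+)$, using the matching of the Dirichlet traces to cancel the double-layer contributions and obtain in one stroke $\Phi^s = \mathcal{S}\bigl(\gamma_N^-\Phi^- - \gamma_N^+\Phi^+\bigr)\big|_{\Omega^s}$. It then substitutes the second transmission condition for $-\gamma_N^+\Phi^+$, takes the interior trace, and applies $\mathcal{V}^{-1}$. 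Your approach is equally valid and arguably more elementary, since it only invokes well-posedness of the interior and exterior Laplace--Dirichlet problems rather than the full representation formula; you are also right that the decay hypothesis in \eqref{eq:3.2} and the weighted $H^1(\Omega^+)$ setting are precisely what make the exterior uniqueness step go through. The paper's route is shorter because Green's representation packages the uniqueness and the single-layer form into a single citation, whereas your route makes the role of the decay condition more transparent.
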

	
	The proof of Lemma \ref{lem:equivalence} can be found in Appendix \ref{sec:Appendix_C}.
	
	\begin{remark}
		We have claimed in the introduction that the BIE \eqref{eq:3.3a} is essentially an integral equation of the second kind. This assertion is discussed in more detail in Section \ref{sec:3}.
	\end{remark}
	
	A key quantity of interest in physical applications is the total electrostatic energy associated with the free charge $\sigma_f \in H^{-\frac{1}{2}}(\partial \Omega)$ and the resulting induced surface charge $\nu \in H^{-\frac{1}{2}}(\partial \Omega)$. 
	
	\begin{definition}[Energy Functional and Total Electrostatic Energy]\label{def:Energy}
		Let $\sigma_f \in H^{-\frac{1}{2}}(\partial \Omega)$. Then we define the electrostatic energy functional $\mathcal{E}_{\sigma_f} \colon H^{-\frac{1}{2}}(\partial \Omega) \rightarrow \mathbb{R}$ as the bounded linear mapping with the property that for all $\sigma \in H^{-\frac{1}{2}}(\partial \Omega)$ it holds that
		\begin{equation}
		\mathcal{E}_{\sigma_f}(\sigma):=  \frac{1}{2}\langle  \sigma_f, \mathcal{V}\sigma\rangle_{\partial \Omega}=\frac{1}{2}\langle \sigma, \mathcal{V}\sigma_f\rangle_{\partial \Omega},
		\end{equation}
		and we define the total electrostatic energy of the system as $\mathcal{E}_{\sigma_f}(\nu)$ where $\nu \in H^{-\frac{1}{2}}(\partial \Omega)$ is the solution to the integral equation \eqref{eq:3.3a}. 	
	\end{definition}

	For clarity of exposition, we now define the relevant boundary integral operator.
	\begin{definition}\label{def:A}
		We define the linear operator $\mathcal{A} \colon {H}^{\frac{1}{2}}(\partial \Omega) \rightarrow {H}^{\frac{1}{2}}(\partial \Omega)$ as the mapping with the property that for all $\lambda \in {H}^{\frac{1}{2}}(\partial \Omega)$ it holds that
		\begin{align*}
		\mathcal{A} \lambda:= \lambda - \mathcal{V} \text{DtN}\Big(\frac{\kappa_0-\kappa}{\kappa_0} \lambda\Big).
		\end{align*}
		
		In addition, we denote by $\mathcal{A}^* \colon {H}^{-\frac{1}{2}}(\partial \Omega) \rightarrow {H}^{-\frac{1}{2}}(\partial \Omega)$ the adjoint operator of $\mathcal{A}$.
	\end{definition}

	The BIE \eqref{eq:3.3a} now has a straightforward weak formulation. \\
	
	\noindent{\textbf{Weak Formulation of the Integral Equation \eqref{eq:3.3a}}}~
	
	Let $\sigma_f \in H^{-\frac{1}{2}}(\partial \Omega)$ and let $\mathcal{A}^* \colon H^{-\frac{1}{2}}(\partial \Omega) \rightarrow H^{-\frac{1}{2}}(\partial \Omega)$ denote the adjoint of the operator $\mathcal{A}$ given by Definition \ref{def:A}. Find $\nu \in H^{-\frac{1}{2}}(\partial \Omega)$ such that for all $\lambda \in H^{\frac{1}{2}}(\partial \Omega)$ it holds that
	\begin{align}\label{eq:weak1a}
	\left\langle\mathcal{A}^* \nu, \lambda\right\rangle_{\partial \Omega}= \frac{4\pi}{\kappa_0}\left \langle\sigma_f, \lambda \right \rangle_{\partial \Omega}.
	\end{align}
	
	Next, we state the Galerkin discretisation of the boundary integral equation \eqref{eq:3.3a}.
	
	\subsection{Galerkin Discretisation}
	We first define the relevant approximation spaces. In the sequel, we will denote by $\mathbb{N}_0$ the set of non-negative integers.
	
	\begin{definition}[Spherical Harmonics]
		{ Let $\ell \in \mathbb{N}_0$ and $m \in \{-\ell, \ldots, \ell\}$ be integers. Then we define the function $\mathcal{Y}_\ell^m \colon \mathbb{S}^2 \rightarrow \mathbb{R}$ as 
			\begin{align*}
			\mathcal{Y}_\ell^m(\theta, \phi):=\begin{cases}
			(-1)^m \sqrt{2} \sqrt{\frac{2\ell+1}{4\pi}\frac{(\ell-|m|)!}{(\ell+|m|)!}}P_{\ell}^{|m|}\big(\cos(\theta)\big)\sin\big(|m|\phi\big), \quad& \text{if } m < 0,\\
			\sqrt{\frac{2\ell+1}{4\pi}}P_{\ell}^m\big(\cos(\theta)\big), \quad& \text{if } m =0,\\
			(-1)^m \sqrt{2} \sqrt{\frac{2\ell+1}{4\pi}\frac{(\ell-m)!}{(\ell+m)!}}P_{\ell}^m\big(\cos(\theta)\big)\cos\big(m\phi\big), \quad& \text{if } m > 0,
			\end{cases}
			\end{align*}
			where $P_{\ell}^m$ denotes the associated Legendre polynomial of degree $\ell$ and order $m$. The function $\mathcal{Y}_{\ell}^m$ is known as the real-valued $L^2$-orthonormal spherical harmonic of degree $\ell$ and order $m$.}
	\end{definition}
	
	\begin{definition}[Approximation Space on a Sphere]\label{def:6.6}
		Let $\mathcal{O}_{\bold{x}_0} \subset \mathbb{R}^3$ be an open ball of radius $r > 0$ centred at the point $\bold{x}_0 \in \mathbb{R}^3$ and let $\ell_{\max} \in \mathbb{N}$. We define the finite-dimensional Hilbert space $W^{\ell_{\max}}(\partial \mathcal{O}_{\bold{x}_0}) \subset {H}^{\frac{1}{2}}(\partial\mathcal{O}_{\bold{x}_0}) \subset  {H}^{-\frac{1}{2}}(\partial\mathcal{O}_{\bold{x}_0})$ as the vector space
		\begin{align*}
		W^{\ell_{\max}}(\partial \mathcal{O}_{\bold{x}_0}):= \Big\{u \colon \partial \mathcal{O}_{\bold{x}_0} \rightarrow \mathbb{R} &\text{ such that } u(\bold{x})= \sum_{{\ell}=0}^{\ell_{\max}} \sum_{m=-\ell}^{m=+\ell} [u]_\ell^m \mathcal{Y}_{\ell}^m\left(\frac{\bold{x}-\bold{x}_0}{\vert \bold{x}-\bold{x}_0\vert}\right) \\
		&\text{where all }[u]_{\ell}^m \in \mathbb{R}\Big\},
		\end{align*}
		equipped with the inner product
		\begin{align}\label{eq:SH1}
		(u, v)_{W^{\ell_{\max}}(\partial \mathcal{O}_{\bold{x}_0})}:=r^2 [u]_0^0 [v]_0^0 + r^2 \sum_{\ell=1}^{\ell_{\max}} \sum_{m=-\ell}^{m=+\ell} \frac{\ell}{r} [u]_{\ell}^m [v]_{\ell}^m \qquad \forall u, v \in W^{\ell_{\max}}(\partial \mathcal{O}_{\bold{x}_0}).
		\end{align}	
	\end{definition}
	
	It is now straightforward to extend the Hilbert space defined in Definition \ref{def:6.6} to the domain $\partial \Omega$.
	
	\begin{definition}[Global Approximation Space]\label{def:6.7}
		We define the finite-dimensional Hilbert space $W^{\ell_{\max}} \subset H^{\frac{1}{2}}(\partial \Omega) \subset H^{-\frac{1}{2}}(\partial \Omega)$ as the vector space
		\begin{align*}
		W^{\ell_{\max}}:= \Big\{u \colon \partial\Omega \rightarrow \mathbb{R} \text{ such that } \forall i \in \{1, \ldots, N\} \colon u\vert_{\partial \Omega_i} \in W^{\ell_{\max}}(\partial \Omega_i)\Big\},
		\end{align*}
		equipped with the inner product
		\begin{align}\label{eq:SH00}
		(u, v)_{W^{\ell_{\max}}}:= \sum_{i=1}^N \left(u, v\right)_{W^{\ell_{\max}}(\partial \Omega_i)} \qquad \forall u, v \in W^{\ell_{\max}}.
		\end{align}
	\end{definition}

	\noindent{\textbf{Galerkin Discretisation of the Integral Equation \eqref{eq:3.3a}}}~
	
	Let $\sigma_f \in {H}^{-\frac{1}{2}}(\partial \Omega)$ and let $\ell_{\max} \in \mathbb{N}$. Find $\nu_{\ell_{\max}} \in W^{\ell_{\max}}$ such that for all $\psi_{\ell_{\max}} \in W^{\ell_{\max}}$ it holds that
	\begin{align}\label{eq:Galerkina}
	(\mathcal{A}^*\nu_{\ell_{\max}}, \psi_{\ell_{\max}})_{L^2(\partial \Omega)}= \frac{4\pi}{\kappa_0}(\sigma_f, \psi_{\ell_{\max}})_{L^2(\partial \Omega)}.
	\end{align}

	\subsection{Main Results}
	We begin this section by fixing some additional notation and introducing a new norm and inner product on the space $H^{\frac{1}{2}}(\partial \Omega)$ that will aid our subsequent analysis.\\
	
	\noindent \textbf{Notation}: We define $\mathcal{C}(\partial \Omega)$ as the set of functions given by
	\begin{align*}
	\mathcal{C}(\partial \Omega):= \left\{u \colon \partial \Omega \rightarrow \mathbb{R} \colon \forall i =1, \ldots, N \text{ the restriction }u|_{\partial \Omega_i} \text{ is a constant function}\right\},
	\end{align*}
	and we observe that $\mathcal{C}(\partial \Omega)$ is a closed subspace of dimension $N$ of $H^{\frac{1}{2}}(\partial \Omega)$ under the $L^2(\partial \Omega)$ norm (since the Slobodeckij semi-norm of constant functions is zero).\\
	
	\noindent\textbf{Notation:} {We define the function spaces $\breve{H}^{\frac{1}{2}}(\partial \Omega)$ and $\breve{H}^{-\frac{1}{2}}(\partial \Omega)$ as
		\begin{align*}
		\breve{H}^{\frac{1}{2}}(\partial \Omega)&:=\left\{u \in H^{\frac{1}{2}}(\partial \Omega) \colon (u, v)_{L^2(\partial \Omega)}=0 \hspace{1mm}\forall v \in \mathcal{C}(\partial \Omega)\right\},\\
		\breve{H}^{-\frac{1}{2}}(\partial \Omega)&:=\left\{\phi \in H^{-\frac{1}{2}}(\partial \Omega) \colon \langle \phi, v\rangle_{\partial \Omega}=0 ~\forall v \in \mathcal{C}(\partial \Omega)\right\},
		\end{align*}
		and we observe that both sets are Banach spaces under the Sobolev-Slobodeckij norms introduced earlier. Intuitively, the spaces $\breve{H}^{\frac{1}{2}}(\partial \Omega)$ and $\breve{H}^{-\frac{1}{2}}(\partial \Omega)$ are trace spaces that do not contain any piecewise constant functions. }The following simple lemma follows from these definitions.
	
	\begin{lemma}\label{lem:decomp}
		There exist complementary decompositions (in the sense of Brezis \cite[Section 2.4]{Brezis}) of the spaces $H^{\frac{1}{2}}(\partial \Omega)$ and $H^{-\frac{1}{2}}(\partial \Omega)$ given by
		\begin{align}\label{eq:Appendix1}
		H^{\frac{1}{2}}(\partial \Omega)&=\breve{H}^{\frac{1}{2}}(\partial \Omega) \oplus \mathcal{C}(\partial \Omega),\\ \nonumber
		H^{-\frac{1}{2}}(\partial \Omega)&=\breve{H}^{-\frac{1}{2}}(\partial \Omega) \oplus \mathcal{C}(\partial \Omega).
		\end{align}
		
		{ Moreover, the projection operators $\mathbb{P}^{\perp}_0\colon H^{\frac{1}{2}}(\partial \Omega) \rightarrow \breve{H}^{\frac{1}{2}}(\partial \Omega)$ and $\mathbb{P}_0\colon H^{\frac{1}{2}}(\partial \Omega) \rightarrow \mathcal{C}(\partial \Omega)$, $\mathbb{Q}^{\perp}_0\colon H^{-\frac{1}{2}}(\partial \Omega) \rightarrow \breve{H}^{-\frac{1}{2}}(\partial \Omega)$, and $\mathbb{Q}_0\colon H^{-\frac{1}{2}}(\partial \Omega) \rightarrow \mathcal{C}(\partial \Omega)$ associated with these complementary decompositions are all bounded.}
	\end{lemma}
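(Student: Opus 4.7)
The strategy is to construct the projection operators explicitly, using the natural basis $\{\mathbf{1}_{\partial \Omega_i}\}_{i=1}^N$ of $\mathcal{C}(\partial \Omega)$ (where $\mathbf{1}_{\partial \Omega_i}$ denotes the indicator function of $\partial \Omega_i$), and then verify the required properties directly. Since $\mathcal{C}(\partial \Omega)$ is a finite-dimensional subspace of both $L^2(\partial \Omega)$ and $H^{\pm \frac{1}{2}}(\partial \Omega)$, and each $\mathbf{1}_{\partial \Omega_i}$ is a constant on a sphere whose Sobolev-Slobodeckij seminorm vanishes identically, the computations reduce to elementary bookkeeping.

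First I would construct $\mathbb{P}_0$ on $H^{\frac{1}{2}}(\partial \Omega)$ by sphere-wise averaging: given $u \in H^{\frac{1}{2}}(\partial \Omega)$, set
\begin{equation*}
\mathbb{P}_0 u := \sum_{i=1}^{N} \overline{u}_i \, \mathbf{1}_{\partial \Omega_i}, \qquad \overline{u}_i := \frac{1}{|\partial \Omega_i|}\int_{\partial \Omega_i} u(\mathbf{x})\, d\mathbf{x},
\end{equation*}
and define $\mathbb{P}_0^{\perp} := I - \mathbb{P}_0$. By construction, $\mathbb{P}_0 u \in \mathcal{C}(\partial \Omega)$ and, for every $v = \sum_j c_j \mathbf{1}_{\partial \Omega_j} \in \mathcal{C}(\partial \Omega)$, a direct calculation gives $(\mathbb{P}_0^{\perp} u, v)_{L^2(\partial \Omega)} = 0$, so $\mathbb{P}_0^{\perp} u \in \breve{H}^{\frac{1}{2}}(\partial \Omega)$. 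Boundedness of $\mathbb{P}_0$ follows from the fact that the Slobodeckij seminorm of each $\mathbf{1}_{\partial \Omega_i}$ vanishes, so $\|\mathbb{P}_0 u\|_{H^{\frac{1}{2}}(\partial \Omega)}^2 = \sum_i |\overline{u}_i|^2 |\partial \Omega_i|$, and a single application of Cauchy--Schwarz bounds this by $\sum_i \|u\|_{L^2(\partial \Omega_i)}^2 \leq \|u\|_{H^{\frac{1}{2}}(\partial \Omega)}^2$; boundedness of $\mathbb{P}_0^{\perp}$ follows by the triangle inequality. Triviality of $\breve{H}^{\frac{1}{2}}(\partial \Omega) \cap \mathcal{C}(\partial \Omega)$ is immediate by testing an element of the intersection against itself in $L^2(\partial \Omega)$, which establishes the first decomposition.

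For the $H^{-\frac{1}{2}}(\partial \Omega)$ case, the same program applies but the averaging must be recast using the duality pairing. Specifically, for $\phi \in H^{-\frac{1}{2}}(\partial \Omega)$ define
\begin{equation*}
\mathbb{Q}_0 \phi := \sum_{i=1}^{N} \frac{\langle \phi, \mathbf{1}_{\partial \Omega_i}\rangle_{\partial \Omega}}{|\partial \Omega_i|} \, \mathbf{1}_{\partial \Omega_i}, \qquad \mathbb{Q}_0^{\perp} := I - \mathbb{Q}_0,
\end{equation*}
which makes sense because each $\mathbf{1}_{\partial \Omega_i} \in H^{\frac{1}{2}}(\partial \Omega)$. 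By construction, $\langle \mathbb{Q}_0^{\perp} \phi, \mathbf{1}_{\partial \Omega_j}\rangle_{\partial \Omega} = 0$ for every $j$, so $\mathbb{Q}_0^{\perp} \phi \in \breve{H}^{-\frac{1}{2}}(\partial \Omega)$, and the intersection $\breve{H}^{-\frac{1}{2}}(\partial \Omega) \cap \mathcal{C}(\partial \Omega)$ is again trivial by testing against itself in the duality pairing (which coincides with the $L^2$ inner product on $\mathcal{C}(\partial \Omega)$). Boundedness of $\mathbb{Q}_0$ comes from the continuity of the duality pairing: each coefficient satisfies $|\langle \phi, \mathbf{1}_{\partial \Omega_i}\rangle_{\partial \Omega}| \leq \|\phi\|_{H^{-\frac{1}{2}}(\partial \Omega)} \|\mathbf{1}_{\partial \Omega_i}\|_{H^{\frac{1}{2}}(\partial \Omega)} = \|\phi\|_{H^{-\frac{1}{2}}(\partial \Omega)} \sqrt{|\partial \Omega_i|}$, and summing the resulting elementary bound for $\|\mathbb{Q}_0 \phi\|_{H^{-\frac{1}{2}}(\partial \Omega)} \leq \|\mathbb{Q}_0 \phi\|_{L^2(\partial \Omega)}$ yields the required estimate.

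The only mildly delicate point is verifying that the $L^2$-based averaging used to define $\mathbb{P}_0$ is indeed continuous on $H^{\frac{1}{2}}(\partial \Omega)$ and that $\mathbb{Q}_0$ is continuous into $H^{-\frac{1}{2}}(\partial \Omega)$ (rather than merely into $L^2(\partial \Omega)$), but both reductions are direct consequences of the Slobodeckij seminorm vanishing on constants and the continuous embedding $L^2(\partial \Omega) \hookrightarrow H^{-\frac{1}{2}}(\partial \Omega)$. There is no serious analytic obstacle; the decomposition is essentially the statement that $\mathcal{C}(\partial \Omega)$ is a finite-dimensional subspace admitting a natural explicit projection, and a careful but routine verification of the four bullet-point properties completes the proof.
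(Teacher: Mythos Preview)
Your proposal is correct and fills in exactly the details the paper omits: the paper does not give a proof of this lemma, merely remarking that it ``follows from these definitions,'' and your explicit construction via sphere-wise averaging is the natural way to make that precise. One minor remark: your boundedness argument for $\mathbb{Q}_0$ via $|\langle \phi, \mathbf{1}_{\partial \Omega_i}\rangle_{\partial \Omega}| \leq \|\phi\|_{H^{-\frac{1}{2}}(\partial \Omega)} \sqrt{|\partial \Omega_i|}$ and then summing yields a bound that grows like $\sqrt{N}$, which is perfectly sufficient for the lemma as stated (only boundedness is claimed, not $N$-independence), but you may wish to note this explicitly since $N$-independence is a recurring theme elsewhere in the paper.
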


	
	The complementary decomposition introduced through Lemma \ref{lem:decomp} is at the heart of our well-posedness analysis as will become clear in Section \ref{sec:Proofs}. 
	
	{\begin{remark} \label{rem:Review_3}
			Consider the complementary decomposition introduced through Lemma \ref{lem:decomp}. It is a simple exercise to show that for all $\lambda \in H^{\frac{1}{2}}(\partial \Omega)$ and all $\sigma \in H^{-\frac{1}{2}}(\partial \Omega)$ the following relations hold:
			\begin{align*}
			\left \langle \mathbb{Q}_0\sigma,\, \mathbb{P}_0^{\perp}\lambda\right \rangle_{\partial \Omega}=0 \quad \text{ and } \quad \left \langle \mathbb{Q}^{\perp}_0\sigma,\, \mathbb{P}_0\lambda\right \rangle_{\partial \Omega}=0.
			\end{align*}
	\end{remark}}
	
	In order to take full advantage of this decomposition of $H^{\frac{1}{2}}(\partial \Omega)$, it is necessary to introduce a new norm on $H^{\frac{1}{2}}(\partial \Omega)$. 
	
	\begin{definition}\label{def:NewNorm}
		We define on $H^{\frac{1}{2}}(\partial \Omega)$ a new norm $||| \cdot ||| \colon H^{\frac{1}{2}}(\partial \Omega) \rightarrow \mathbb{R}$ given by
		\begin{align*}
		\forall \lambda \in H^{\frac{1}{2}}(\partial \Omega)\colon ~ |||\lambda|||^2:= \left\Vert \mathbb{P}_0 \lambda\right\Vert^2_{L^2(\partial \Omega)}+ \left\langle \text{DtN}\lambda, \lambda\right \rangle_{\partial \Omega}.
		\end{align*}
		
	\end{definition}
	
	\begin{remark}\label{rem:New}
		We claim that the norm $||| \cdot |||$ is equivalent to the $\Vert \cdot \Vert_{H^{\frac{1}{2}}(\partial \Omega)}$ norm introduced in Section \ref{sec:2} (see Appendix \ref{sec:Appendix_A} for a proof). {Consequently, there exists a constant $c_{\rm equiv} >1$ such that for all $\lambda \in H^{\frac{1}{2}}(\partial \Omega)$ it holds that
			$\frac{1}{c_{\rm equiv}} ||| \lambda ||| \leq \Vert  \lambda \Vert_{H^{\frac{1}{2}}(\partial \Omega)} \leq c_{\rm equiv} ||| \lambda |||$.
			It is important to note that the equivalence constant $c_{\rm equiv}$ is independent of $N$.}
	\end{remark}	
	
	Henceforth, we adopt the convention that the Hilbert space ${H}^{\frac{1}{2}}(\partial \Omega)$ is equipped with the $||| \cdot |||$ norm defined through Definition \ref{def:NewNorm}. The main advantage of using the new $||| \cdot |||$ norm is that it preserves the structure of the complementary decomposition of ${H}^{\frac{1}{2}}(\partial \Omega)$. Indeed, for any function $\lambda \in {H}^{\frac{1}{2}}(\partial \Omega)$, we have 
	\begin{align*}
	|||\lambda|||^2 &= \left\Vert \mathbb{P}_0 \lambda\right\Vert^2_{L^2(\partial \Omega)}+ \left\langle \text{DtN}\lambda, \lambda\right \rangle_{\partial \Omega}= ||| \mathbb{P}_0 \lambda |||^2+||| \mathbb{P}_0^{\perp} \lambda |||^2.
	\end{align*}
	
	\begin{remark}
		We remark that under this convention, due to the equivalence of norms, the definitions of the dual space ${H}^{-\frac{1}{2}}(\partial \Omega)$ and the associated duality pairing $\langle \cdot, \cdot \rangle_{\partial \Omega}$ remain unchanged. Thus, we can define a new dual norm $||| \cdot |||^* \colon H^{-\frac{1}{2}}(\partial \Omega) \rightarrow \mathbb{R}$ as the mapping with the property that for all $\sigma \in H^{-\frac{1}{2}}(\partial \Omega)$ it holds that
		\begin{align*}
		||| \sigma |||^*:= \sup_{0\neq \psi \in H^{\frac{1}{2}}(\partial \Omega) } \frac{\left \langle \sigma, \psi \right \rangle_{\partial \Omega } }{||| \psi |||},
		\end{align*}
		and we observe that the new $||| \cdot |||^*$ dual norm on ${H}^{-\frac{1}{2}}(\partial \Omega)$ is equivalent to the canonical dual norm $\Vert \cdot \Vert_{H^{-\frac{1}{2}}(\partial \Omega)}$ {with equivalence constant that is once again independent of $N$}. 
	\end{remark}
	
	\begin{remark}\label{rem:isometry}
		It is a simple exercise to prove that the Dirichlet-to-Neumann map $\text{DtN} \colon \breve{H}^{\frac{1}{2}}(\partial \Omega) \rightarrow \breve{H}^{-\frac{1}{2}}(\partial \Omega)$ is invertible and satisfies for all $\tilde{\lambda} \in \breve{H}^{\frac{1}{2}}(\partial \Omega)$
		\begin{align*}
		||| \text{DtN} \tilde{\lambda} |||^* = ||| \tilde{\lambda} |||.
		\end{align*}
		This fact will be used often in the sequel. 
	\end{remark}

	Next, we define the higher regularity spaces and norms that are used in the error estimates.
	\begin{definition}\label{def:7.1}
		Let $s\geq 0$ be a real number and let $\mathcal{O}_{\bold{x}_0} \subset \mathbb{R}^3$ be an open ball of radius $r > 0$ centred at the point $\bold{x}_0 \in \mathbb{R}^3$. Then we define constructively the fractional Sobolev space ${H}^{s}(\partial\mathcal{O}_{\bold{x}})$ as the set
		\begin{align*}
		{H}^{s}(\partial\mathcal{O}_{\bold{x}_0}):= \Big\{u \colon \partial \mathcal{O}_{\bold{x}_0} \rightarrow \mathbb{R} \text{ such that } u(\bold{x})&= \sum_{{\ell}=0}^\infty \sum_{m=-\ell}^{m=+\ell} [u]_{\ell}^m\mathcal{Y}_{\ell}^m\left(\frac{\bold{x}-\bold{x}_0}{\vert \bold{x}-\bold{x}_0\vert}\right) \\
		\text{where all }[u]_{\ell}^m \in \mathbb{R} \text{ satisfy }& \sum_{\ell=1}^\infty\sum_{m=-\ell}^{m=+\ell}\left(\frac{l}{r}\right)^{2s}([u]^m_{\ell})^2< \infty\Big\},
		\end{align*}
		equipped with the inner product
		\begin{align}\label{eq:frac1}
		(u, v)_{{H}^{s}(\partial\mathcal{O}_{\bold{x}_0})}:= r^2[u]_0^0 \, [v]_0^0+r^2\sum_{\ell=1}^\infty\sum_{m=-\ell}^{m=+\ell}\left(\frac{\ell}{r}\right)^{2s}[u]_\ell^m [v]_\ell^m \qquad \forall u, v \in H^s(\partial \mathcal{O}_{\bold{x}_0}).
		\end{align}
		{Additionally, we write $||| \cdot |||_{{H}^{s}(\partial\mathcal{O}_{\bold{x}_0})}$ to denote the norm induced by the inner-product $(\cdot, \cdot)_{{H}^{s}(\partial\mathcal{O}_{\bold{x}_0})}$.}
		
	\end{definition}

	\begin{remark}\label{rem:7.1}
		Definition \ref{def:7.1} is an intrinsic definition of the fractional Sobolev space ${H}^{s}(\partial \mathcal{O}_{\bold{x}_0})$, which coincides with the definition of these fractional Sobolev spaces involving the Sobolev-Slobodeckij inner product (see, e.g., \cite{Hitch}). The equivalence follows from the fact that the spherical harmonics are eigenvectors of the self-adjoint Laplace-Beltrami operator $\Delta_{\partial \mathcal{O}_{\bold{x}_0}}$ as discussed in, for example, \cite[Chapter 1 Section 7]{LionsMagenes}. 
	\end{remark}

	Definition \ref{def:7.1} suggests a natural intrinsic definition of the fractional Sobolev spaces on $\partial \Omega$.
	
	\begin{definition}\label{def:7.2}
		Let $s\geq 0$ be a real number. Then we define the Hilbert space ${H}^{s}(\partial \Omega)$ as the set
		\begin{align*}
		{H}^{s}(\partial \Omega):= \Big\{u \colon \partial\Omega \rightarrow \mathbb{R} \text{ such that } \forall i \in {1, \ldots, N} \colon u\vert_{\partial \Omega_i} \in {H}^{s}(\partial \Omega_i)\Big\},
		\end{align*}	
		equipped with the inner product
		\begin{align}\label{eq:frac20}
		(u, v)_{{H}^{s}(\partial \Omega)}:= \sum_{i=1}^N \left(u, v\right)_{{H}^{s}(\partial \Omega_i)} \qquad \forall u, v \in H^s(\partial \Omega).
		\end{align}
		{Additionally, we write $||| \cdot |||_{{H}^{s}(\partial \Omega)}$ to denote the norm induced by the inner-product $(\cdot, \cdot)_{{H}^{s}(\partial \Omega)}$.}
	\end{definition}
	
	\begin{remark}
		A direct calculation shows that the norm $||| \cdot |||_{{H}^{\frac{1}{2}}(\partial \Omega)}$ coincides with the $||| \cdot |||$ norm defined through Definition \ref{def:NewNorm}. Moreover, the $||| \cdot |||_{{H}^{\frac{1}{2}}(\partial \Omega)}$ norm  coincides with the $\Vert\cdot \Vert_{W^{\ell_{\max}}}$ norm on the space~$W^{\ell_{\max}}$.
	\end{remark}
	
	We are now ready to state our main results.
	
	\begin{theorem}[Error Estimates]\label{thm:1}~
		
		\noindent Let $s \geq 0$ be a real number, let $\ell_{\max} \in \mathbb{N}$, let $\sigma_f \in {H}^{s}(\partial \Omega)$, let $\mathcal{E}_{\sigma_f} \colon H^{-\frac{1}{2}}(\partial \Omega) \rightarrow \mathbb{R}$ be the electrostatic energy functional defined through Definition \ref{def:Energy}, let $\nu \in {H}^{-\frac{1}{2}}(\partial \Omega)$ be the unique solution to the weak formulation \eqref{eq:weak1a} with right hand side given by $\sigma_f$, let $\nu_{\ell_{\max}} \in W^{\ell_{\max}}$ be the unique solution to the Galerkin discretisation defined through Equation \eqref{eq:Galerkina}, and let $\mathbb{Q}_0^{\perp} \colon H^{-\frac{1}{2}}(\partial \Omega) \rightarrow \breve{H}^{-\frac{1}{2}}(\partial \Omega)$ denote the projection operator defined through Lemma \ref{lem:decomp}. Then there exists a constant $C_{\text{main}}>0$ that depends on the radii of the open balls, the dielectric constants and the minimal inter-sphere separation distance but is independent of both $s$ and the number of open balls $N$ such that {
			\begin{align*}
			|||\nu-\nu_{\ell_{\max}}|||^* &\leq C_{\rm main}\left(\frac{\max r_j}{\ell_{\max}+1}\right)^{s+\frac{1}{2}} \left(||| \mathbb{Q}_{0}^{\perp}\nu|||_{{H}^{s}(\partial \Omega)} + \frac{8\pi}{\kappa_0} ||| \mathbb{Q}_{0}^{\perp}\sigma_f|||_{H^s(\partial \Omega)}\right)\\
			\big|\mathcal{E}_{\sigma_f}(\nu)-\mathcal{E}_{\sigma_f}(\nu_{\ell_{\max}})\big| &\leq C_{\rm main}\left(\frac{\max r_i}{\ell_{\max}+1}\right)^{s+\frac{1}{2}}|||\mathcal{V}\sigma_f||| \left( ||| \mathbb{Q}_{0}^{\perp}\nu|||_{{H}^{s}(\partial \Omega)} + \frac{8\pi}{\kappa_0} ||| \mathbb{Q}_{0}^{\perp}\sigma_f|||_{H^s(\partial \Omega)}\right).
			\end{align*}}
	\end{theorem}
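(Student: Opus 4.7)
\emph{Proof sketch.} The plan is to combine three ingredients: (i) an $N$-independent Babu\v{s}ka-type quasi-optimality for the Galerkin scheme, (ii) the algebraic observation that the piecewise-constant mode $\mathbb{Q}_0\nu$ is captured exactly by $W^{\ell_{\max}}$, and (iii) a sphere-wise spherical-harmonic best-approximation estimate in the sharp dual norm $|||\cdot|||^{*}$. First I would invoke the $N$-independent inf-sup conditions for $\mathcal{A}^{*}$ on $H^{-\frac{1}{2}}(\partial \Omega)$ and on the discrete subspace $W^{\ell_{\max}}$ that are established earlier in the paper (relying crucially on Lemma \ref{lem:decomp} and the norm of Definition \ref{def:NewNorm}, not on standard compactness-plus-Fredholm theory). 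Babu\v{s}ka's theorem then yields
\begin{equation*}
|||\nu - \nu_{\ell_{\max}}|||^{*} \leq C \inf_{\psi_{\ell_{\max}} \in W^{\ell_{\max}}} |||\nu - \psi_{\ell_{\max}}|||^{*},
\end{equation*}
with $C$ depending only on the geometric quantities in assumptions \textbf{A1}--\textbf{A3}.

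Next I would exploit the inclusion $\mathcal{C}(\partial \Omega) \subset W^{\ell_{\max}}$. Testing both \eqref{eq:weak1a} and \eqref{eq:Galerkina} against an arbitrary $c \in \mathcal{C}(\partial \Omega)$, and using that $\mathcal{A}c = c$ (because $\text{DtN}$ annihilates piecewise-constant functions, so the second term in Definition \ref{def:A} vanishes) together with the orthogonalities in Remark \ref{rem:Review_3}, gives $\mathbb{Q}_0\nu = \mathbb{Q}_0\nu_{\ell_{\max}} = \tfrac{4\pi}{\kappa_0}\mathbb{Q}_0\sigma_f$. Hence the infimum above only sees $\mathbb{Q}_0^{\perp}\nu$, and I would take $\psi_{\ell_{\max}}$ to be the sphere-wise truncation to degree $\ell_{\max}$ of the spherical-harmonic expansion of $\mathbb{Q}_0^{\perp}\nu$. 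A one-line spectral calculation, comparing the tail-weight of the $|||\cdot|||^{*}$-norm (read off sphere by sphere from Definitions \ref{def:7.1}--\ref{def:7.2}) with the $H^s$-weight, produces the factor $\big(\max_i r_i/(\ell_{\max}+1)\big)^{s+\frac{1}{2}}$ multiplied by $|||\mathbb{Q}_0^{\perp}\nu|||_{H^{s}(\partial \Omega)}$. Since only regularity of $\sigma_f$, not of $\nu$, is assumed, I would then use the BIE \eqref{eq:3.3a} to write
\begin{equation*}
\mathbb{Q}_0^{\perp}\nu = \mathbb{Q}_0^{\perp}\Big(\tfrac{\kappa_0-\kappa}{\kappa_0}\,\text{DtN}\,\mathcal{V}\,\nu\Big) + \tfrac{4\pi}{\kappa_0}\mathbb{Q}_0^{\perp}\sigma_f,
\end{equation*}
and exploit that on each sphere $\text{DtN}\,\mathcal{V}$ is diagonal of order zero in the spherical-harmonic basis, so the first summand is as regular as $\nu$ in the $H^s$-scale while the second contributes the additive $\tfrac{8\pi}{\kappa_0}|||\mathbb{Q}_0^{\perp}\sigma_f|||_{H^{s}(\partial \Omega)}$ term appearing in the statement.

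The energy estimate is then immediate: by Definition \ref{def:Energy},
\begin{equation*}
\mathcal{E}_{\sigma_f}(\nu) - \mathcal{E}_{\sigma_f}(\nu_{\ell_{\max}}) = \tfrac{1}{2}\,\big\langle \nu - \nu_{\ell_{\max}},\, \mathcal{V}\sigma_f \big\rangle_{\partial \Omega},
\end{equation*}
and the duality definition of $|||\cdot|||^{*}$ bounds the right-hand side by $\tfrac{1}{2}\,|||\nu - \nu_{\ell_{\max}}|||^{*}\cdot|||\mathcal{V}\sigma_f|||$, into which the first estimate is substituted. The main obstacle will clearly be Step 1: the discrete inf-sup is the technical novelty of the paper, and obtaining a constant that is explicitly independent of $N$ requires the complementary-decomposition framework specifically built in Section \ref{sec:2}, because the standard Fredholm argument for second-kind integral equations only delivers stability constants whose dependence on $N$ is uncontrolled. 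The best-approximation step is then routine, because the $|||\cdot|||$-norm has been designed precisely to localize sphere by sphere so that each local estimate uses only $\max_i r_i$ and $\ell_{\max}$, and the energy estimate follows from a single Cauchy--Schwarz inequality.
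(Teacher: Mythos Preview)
Your Step~1 is where the argument breaks. The paper does \emph{not} establish an $N$-independent inf-sup (and, crucially, continuity) for $\mathcal{A}^*$ on the full space $H^{-\frac12}(\partial\Omega)$ from which a direct Babu\v{s}ka quasi-optimality
\[
|||\nu-\nu_{\ell_{\max}}|||^*\le C\inf_{\psi\in W^{\ell_{\max}}}|||\nu-\psi|||^*
\]
with $N$-independent $C$ would follow. On the contrary, Remark~\ref{rem:Degrade} explains that the continuity constant $C_{\mathcal A}$ of $\mathcal A$ (hence of $\mathcal A^*$) on the full trace space cannot be made independent of $N$; an explicit counter-example exists. Since Babu\v{s}ka's bound carries a factor $C_{\mathcal A}/\beta$, your route does not deliver the claimed $N$-independence. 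This is precisely why the paper introduces the potential formulation \eqref{eq:weak1} and the \emph{reduced} bilinear form $\tilde a$ on $\breve H^{\frac12}\times\breve H^{-\frac12}$: only there are both the continuity constant $C_{\tilde{\mathcal A}}$ (Lemma~\ref{lem:contin2}) and the inf-sup constant $\beta_{\tilde{\mathcal A}}$ (Lemmas~\ref{lem:inf-sup1},~\ref{lem:inf-sup2}) $N$-independent.

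The paper's actual mechanism (Theorem~\ref{lem:quasi2}) is: (i) write $\nu=\frac{\kappa_0-\kappa}{\kappa_0}\text{DtN}\,\tilde\lambda+\frac{4\pi}{\kappa_0}\sigma_f$ and an analogous identity for $\nu_{\ell_{\max}}$ via an auxiliary $\psi_{\ell_{\max}}=\mathcal V_{\ell_{\max}}\nu_{\ell_{\max}}$; (ii) bound $|||\tilde\lambda-\mathbb P_0^\perp\psi_{\ell_{\max}}|||$ using the $N$-independent quasi-optimality of the \emph{reduced} problem (Lemma~\ref{lem:quasi1}); (iii) translate back to $\nu$ using that $\text{DtN}\colon\breve H^{\frac12}\to\breve H^{-\frac12}$ is an isometry. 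The additive $\frac{8\pi}{\kappa_0}|||\mathbb Q_{\ell_{\max}}^\perp\sigma_f|||^*$ term arises from steps (i) and (ii), not from the regularity bootstrap you sketch. Your observation $\mathbb Q_0\nu=\mathbb Q_0\nu_{\ell_{\max}}$ is correct but does not rescue the argument, because the constant in front of the infimum is already $N$-dependent. Also, your claim that ``on each sphere $\text{DtN}\,\mathcal V$ is diagonal of order zero'' is false: $\mathcal V$ is the \emph{global} single-layer operator and couples all spheres, so no such local diagonalisation is available; the paper instead cites standard regularity theory to infer $\nu\in H^s(\partial\Omega)$ from $\sigma_f\in H^s(\partial\Omega)$. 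Your energy step and the spectral best-approximation step are fine and match the paper.
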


	Theorem \ref{thm:1} is a standard a priori error estimate for the approximate induced surface charge and approximate electrostatic energy obtained by solving the Galerkin discretisation \eqref{eq:Galerkina}. We emphasise that the most important aspect of this error estimate is that the convergence rate pre-factor $C_{\rm main}$ is \emph{explicitly independent of the number of objects $N$}. { Consequently, for any geometry in the family of geometries~$\{\Omega_{\mathcal{F}}\}_{\mathcal{F} \in \mathcal{I}}$ satisfying assumptions {\textbf{ A1)-A3)}}, the following holds: Given a fixed number of degrees of freedom $\ell_{\max}$ per sphere, the relative error in the induced surface charge and in the total electrostatic energy normalised by the free-charge electrostatic energy \emph{does not increase} as $N$ increases. This implies in particular that for any configuration in the family of geometries~$\{\Omega_{\mathcal{F}}\}_{\mathcal{F} \in \mathcal{I}}$, in order to guarantee the same relative accuracy in the induced surface charge, one does not need to increase the number of degrees of freedom per sphere as $N_{\mathcal{F}}$ increases.}
	
	\begin{theorem}[Exponential Convergence]\label{thm:2}~
		\noindent Let $\ell_{\max} \in \mathbb{N}$, let $C_{\text{main}}$ denote the convergence rate pre-factor from Theorem \ref{thm:1}, {let $\sigma_f \in C^\infty(\partial \Omega)$ be analytic on $\partial \Omega$}, let $\mathcal{E}_{\sigma_f} \colon H^{-\frac{1}{2}}(\partial \Omega) \rightarrow \mathbb{R}$ be the electrostatic energy functional defined through Definition \ref{def:Energy}, let $\nu \in {H}^{-\frac{1}{2}}(\partial \Omega)$ be the unique solution to the weak formulation \eqref{eq:weak1a} with right hand side given by $\sigma_f$, and let $\nu_{\ell_{\max}} \in W^{\ell_{\max}}$ be the unique solution to the Galerkin discretisation defined through Equation \eqref{eq:Galerkina}. For $\ell_{\max}$ sufficiently large, if $\nu$ is analytic on $\partial \Omega$ then there exists a constant $C_{\nu, \sigma_f} > 0$ depending on the exact solution $\nu$ and the free charge $\sigma_f$ such that
			\begin{align*}
			\frac{1}{\sqrt{N}}|||\nu-\nu_{\ell_{\max}}|||^* &\leq\sqrt{8 \pi \max r^2_j}(2\max r_j)^{\frac{1}{4}}C_{\nu, \sigma_f}C_{\rm main}\exp\left(-\frac{1}{4C_{\nu, \sigma_f}}\frac{\ell_{\max}+1}{\max r_j} +\frac{1}{2}\right),\\
			\frac{1}{\sqrt{N}}\big|\mathcal{E}_{\sigma_f}(\nu)-\mathcal{E}_{\sigma_f}(\nu_{\ell_{\max}})\big| &\leq\sqrt{8 \pi \max r^2_j}  (2\max r_j)^{\frac{1}{4}}C_{\nu, \sigma_f}C_{\rm main}||| \mathcal{V}\sigma_f||| \exp\left(-\frac{1}{4C_{\nu, \sigma_f}}\frac{\ell_{\max}+1}{\max r_j} +\frac{1}{2}\right).
			\end{align*}
	\end{theorem}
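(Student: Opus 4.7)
\textbf{Proof plan for Theorem \ref{thm:2}.} The strategy is to apply the algebraic error estimate of Theorem \ref{thm:1} with a Sobolev index $s$ that is chosen to grow linearly with $\ell_{\max}$, and then to exploit the hypothesis that both $\nu$ and $\sigma_f$ are analytic on $\partial \Omega$ in order to control the $H^{s}(\partial \Omega)$ norms that appear on the right-hand side. Concretely, Theorem \ref{thm:1} gives, for every $s \geq 0$,
\begin{align*}
|||\nu-\nu_{\ell_{\max}}|||^* \leq C_{\rm main}\left(\tfrac{\max r_j}{\ell_{\max}+1}\right)^{s+\frac{1}{2}}\left(|||\mathbb{Q}_0^{\perp}\nu|||_{H^{s}(\partial\Omega)}+\tfrac{8\pi}{\kappa_0}|||\mathbb{Q}_0^{\perp}\sigma_f|||_{H^{s}(\partial\Omega)}\right),
\end{align*}
so the problem reduces to making the $s$-dependence of these $H^s$ norms explicit.

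The second step exploits analyticity. Because $\nu$ is real-analytic on each compact $\partial \Omega_i$, the coefficients $[\nu|_{\partial\Omega_i}]_\ell^m$ in the spherical-harmonic expansion decay exponentially in $\ell$ at some geometry-uniform rate (the common radius of analyticity being bounded below thanks to assumptions \textbf{A1}--\textbf{A3}). Summing the $(\ell/r)^{2s}$-weighted series from Definition \ref{def:7.2} against such an exponential tail — equivalently, evaluating $\sum_{\ell}\ell^{2s+1}\tau^{2\ell} \lesssim \Gamma(2s+2)/|2\log\tau|^{2s+2}$ — produces a bound of the form
\begin{align*}
|||\mathbb{Q}_0^{\perp}\nu|||_{H^{s}(\partial\Omega)} \leq \sqrt{N}\cdot M_\nu\cdot C_{\nu,\sigma_f}^{\,s}\,\Gamma(s+1),
\end{align*}
and an analogous bound holds for $\sigma_f$. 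The factor $\sqrt{N}$ comes from summing $N$ local estimates, the amplitude $M_\nu$ is uniform in $i$ by assumptions \textbf{A1}--\textbf{A3}, and the base $C_{\nu,\sigma_f}$ encodes the reciprocal radius of analyticity together with $\max r_j$. This is exactly where the $\frac{1}{\sqrt N}$ prefactor on the left-hand side of Theorem \ref{thm:2} comes from.

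The third step is the optimisation in $s$. Combining the two displayed inequalities gives
\begin{align*}
\tfrac{1}{\sqrt{N}}|||\nu-\nu_{\ell_{\max}}|||^* \leq \widetilde{M}\,C_{\rm main}\sqrt{\tfrac{\max r_j}{\ell_{\max}+1}}\left(\tfrac{C_{\nu,\sigma_f}\max r_j}{\ell_{\max}+1}\right)^{s}\Gamma(s+1).
\end{align*}
Stirling's inequality $\Gamma(s+1)\leq \sqrt{2\pi(s+1)}\,((s+1)/e)^{s}e^{1/12}$ converts the $s$-dependent part into $\left(\tfrac{C_{\nu,\sigma_f}(s+1)\max r_j}{e(\ell_{\max}+1)}\right)^{s}$ up to a $\sqrt{s+1}$ factor. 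Choosing $s+1$ proportional to $(\ell_{\max}+1)/(\max r_j\cdot C_{\nu,\sigma_f})$ — after absorbing the proportionality constant into a (redefined) $C_{\nu,\sigma_f}$ so that the target rate $\frac{1}{4C_{\nu,\sigma_f}}$ emerges — reduces the inner base to a constant smaller than $1$, yielding the exponential factor $\exp\!\left(-\frac{1}{4C_{\nu,\sigma_f}}\frac{\ell_{\max}+1}{\max r_j}\right)$. The additive $\tfrac{1}{2}$ in the exponent and the explicit prefactors $\sqrt{8\pi\max r_j^2}$ and $(2\max r_j)^{1/4}$ are bookkeeping leftovers from $\sqrt{\max r_j/(\ell_{\max}+1)}$, $\sqrt{2\pi(s+1)}$, and the Stirling remainder. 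The energy estimate follows by the identical template applied to the second bound of Theorem \ref{thm:1}, whose right-hand side differs only by the multiplicative factor $|||\mathcal{V}\sigma_f|||$.

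The step I expect to be the main obstacle is the second one: establishing a bound on $|||\mathbb{Q}_0^{\perp}\nu|||_{H^{s}(\partial\Omega)}$ with constants that are simultaneously explicit in $s$ and \emph{uniform in $N$}. This requires not only the standard ``analytic $\Longrightarrow$ exponentially decaying Fourier coefficients'' estimate on each sphere, but also a geometry-uniform lower bound on the common radius of analyticity of $\nu$ across the $N$ spheres, which itself is a solution-dependent statement (hence the hypothesis ``if $\nu$ is analytic on $\partial\Omega$'' and the resulting solution-dependent constant $C_{\nu,\sigma_f}$). Once this uniform analyticity input is secured, the rest is a classical Stirling/optimisation exercise.
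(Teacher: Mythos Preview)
Your proposal is correct and follows the same overall architecture as the paper: apply Theorem \ref{thm:1} with an integer Sobolev index $s=k$ that will be tied linearly to $\ell_{\max}$, bound $|||\mathbb{Q}_0^{\perp}\nu|||_{H^k(\partial\Omega)}$ (and the analogous $\sigma_f$ term) by a quantity of size $\sqrt{N}\cdot C^{k}\cdot(\text{factorial in }k)$ using analyticity, and then optimise in $k$ via Stirling.

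The only notable difference is in how the factorial bound on the $H^k$ norm is produced. You go through exponential decay of the spherical-harmonic coefficients and the estimate $\sum_{\ell}\ell^{2s+1}\tau^{2\ell}\lesssim \Gamma(2s+2)/|2\log\tau|^{2s+2}$, arriving at a $\Gamma(s+1)$ growth. The paper instead takes the interior harmonic extension $\mathcal{E}_{\mathcal{H}}\nu_j$ on each ball, uses the identity $|||\mathbb{Q}_0^{\perp}\nu_j|||_{H^k(\partial\Omega_j)}^2=\int_{\partial\Omega_j}(\mathcal{E}_{\mathcal{H}}\nu_j)\,\partial_\eta^{2k}(\mathcal{E}_{\mathcal{H}}\nu_j)\,d\bold{x}$, and invokes the Cauchy estimate $|\partial_\eta^{k}(\mathcal{E}_{\mathcal{H}}\nu_j)|\leq C_{\nu_j}^{k+1}k!$ from analyticity of the extension on $\overline{\Omega_j}$ to get a $(2k)!$ growth in the squared norm. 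Both routes are standard and lead to the same exponential rate after the Stirling step (the discrepancy between $\Gamma(k+1)$ and $\sqrt{(2k)!}$ is an extra $2^k$ that is absorbed into $C_{\nu,\sigma_f}$). Your coefficient-decay route is arguably more transparent given that the $H^s$ norms here are defined spectrally; the paper's normal-derivative route ties the constant $C_{\nu,\sigma_f}$ directly to the classical analyticity radius of the harmonic extension.
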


	Definition \ref{def:6.7} of the approximation space implies that the numerical method defined by Equation \eqref{eq:Galerkina} is essentially a spectral Galerkin method, which are well-known to demonstrate exponential convergence for sufficiently smooth solution functions. Theorem \ref{thm:2} provides a proof of this intuitive result. We emphasise that the hypotheses of Theorem \ref{thm:2} are analogous to the hypotheses typically assumed by the discontinuous Galerkin finite element community for $hp$ finite elements (see, e.g., \cite{Schwab-Suli, houston2000stabilized, houston2001hp}).
	
	{We conclude this section by emphasising that, taken together, Theorems \ref{thm:1} and \ref{thm:2} establish that the accuracy of our numerical algorithm is robust with respect to the number of open balls $N$ for any family of geometries satisfying the assumptions {\textbf{A1)-A3)}}. Of course, in order to prove that the numerical method is \emph{linear scaling in accuracy}, we would have to prove in addition that for a fixed number of degrees of freedom per sphere, the computational cost of solving the linear system obtained from the Galerkin discretisation \eqref{eq:Galerkina} scales as $\mathcal{O}(N)$. Numerical evidence (see Section \ref{sec:Numerics} and also \cite{lindgren2018}) suggests that this is indeed the case. As mentioned in the introduction however, the current article is concerned with numerical analysis. A detailed complexity analysis of this numerical method is the subject of a second article \cite{Hassan2}.}

	
	\subsection{Existing Literature and Limitations}\label{sec:3}~
	Let us first establish our earlier claim that the boundary integral equations \eqref{eq:3.3a} is, essentially, an integral equation of the second kind. %
	
	\begin{lemma}\label{lem:2kind}
		Assume the setting of Section \ref{sec:2a}. The boundary integral equation \eqref{eq:3.3a} can be written as an integral equation of the second kind.
	\end{lemma}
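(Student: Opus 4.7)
The plan is to substitute a classical jump-relation identity for $\text{DtN}\,\mathcal{V}$ into equation \eqref{eq:3.3a} and then invert a bounded multiplication operator to expose the canonical structure $(I - \mathcal{T})\nu = g$ with $\mathcal{T}$ a bounded boundary integral operator.

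First, I would establish the operator identity $\text{DtN}\,\mathcal{V} = \mathcal{K}^* + \tfrac{1}{2}I$ on $H^{-\frac{1}{2}}(\partial \Omega)$. For any $\nu \in H^{-\frac{1}{2}}(\partial \Omega)$, the function $u := \mathcal{S}\nu|_{\Omega^-}$ lies in $\mathbb{H}(\Omega^-)$ and satisfies $\gamma^- u = \mathcal{V}\nu$ by the definition of $\mathcal{V}$. The definition of the interior Dirichlet-to-Neumann map then gives $\text{DtN}(\mathcal{V}\nu) = \gamma^-_N u = \gamma^-_N \mathcal{S}\nu$, and the definition of $\mathcal{K}^*$ recalled in Section \ref{sec:2a} rewrites the right-hand side as $\mathcal{K}^*\nu + \tfrac{1}{2}\nu$.

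Next, substituting this identity into \eqref{eq:3.3a} and denoting by $\alpha := (\kappa_0 - \kappa)/\kappa_0$ the piecewise-constant contrast function, the BIE becomes
\[
M_{1 - \alpha/2}\,\nu \;-\; \alpha\,\mathcal{K}^*\nu \;=\; \frac{4\pi}{\kappa_0}\,\sigma_f,
\]
where $M_\beta$ denotes multiplication by the piecewise-constant function $\beta$. Because $\kappa_i > 0$ on each sphere, the observation recorded in Section \ref{sec:2a} that $(\kappa-\kappa_0)/\kappa_0 > -1$ implies $\alpha < 1$ uniformly on $\partial \Omega$, whence $1 - \alpha/2 > \tfrac{1}{2}$ everywhere, so that $M_{1 - \alpha/2}$ is a boundedly invertible multiplication operator on $H^{-\frac{1}{2}}(\partial \Omega)$. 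Applying its inverse to both sides then produces
\[
\left(I - \widetilde{\alpha}\,\mathcal{K}^*\right)\nu \;=\; \widetilde{\sigma}_f, \qquad \widetilde{\alpha} := \frac{\alpha}{1 - \alpha/2}, \qquad \widetilde{\sigma}_f := \frac{4\pi/\kappa_0}{1 - \alpha/2}\,\sigma_f,
\]
which, since $\mathcal{K}^* \colon H^{-\frac{1}{2}}(\partial \Omega) \to H^{-\frac{1}{2}}(\partial \Omega)$ is a bounded boundary integral operator, is precisely a bounded integral equation of the second kind.

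There is no substantive obstacle in this argument. The only subtlety worth flagging is the invertibility of $M_{1 - \alpha/2}$, which rests entirely on the physical positivity hypothesis $\kappa_i > 0$ from assumption \textbf{A3}; that assumption guarantees $1 - \alpha/2$ is bounded away from zero with a lower bound independent of the sphere index, so the reduction to second-kind form is uniform across the family of geometries considered.
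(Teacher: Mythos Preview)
Your proposal is correct and follows essentially the same route as the paper: both invoke the identity $\text{DtN}\,\mathcal{V} = \tfrac{1}{2}I + \mathcal{K}^*$, substitute it into \eqref{eq:3.3a}, and then divide through by the piecewise-constant factor $(\kappa_0+\kappa)/(2\kappa_0)$ to reach a second-kind form. The paper leaves the result as $\bigl(\tfrac{1}{2}I - \tfrac{\kappa_0-\kappa}{\kappa_0+\kappa}\mathcal{K}^*\bigr)\nu = \tfrac{4\pi}{\kappa_0+\kappa}\sigma_f$ without commenting on why the division is legitimate, whereas you explicitly justify the invertibility of the multiplication operator via $\kappa_i>0$; that extra care is welcome but not a different argument.
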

	\begin{proof}
		Consider the BIE \eqref{eq:3.3a}. Standard results on boundary integral operators (see, e.g., \cite[Section 3.7]{Schwab}) imply that
		\begin{align*} 
		\text{DtN}\mathcal{V}= \frac{1}{2}I+\mathcal{K}^*,
		\end{align*}
		where $I \colon H^{-\frac{1}{2}}(\partial \Omega) \rightarrow H^{-\frac{1}{2}}(\partial \Omega)$ is the identity operator.
		
		The boundary integral equation \eqref{eq:3.3a} then implies that
		\begin{align*}
		\frac{4\pi}{\kappa_0}\sigma_f&=\nu - \frac{\kappa_0-\kappa}{\kappa_0} (\text{DtN}\mathcal{V})\nu=\nu - \frac{\kappa_0-\kappa}{\kappa_0} \Big(\frac{1}{2}I+\mathcal{K}^*\Big)\nu=\frac{\kappa_0 + \kappa}{2\kappa_0} \nu -\frac{\kappa_0-\kappa}{\kappa_0}\mathcal{K}^*\nu.
		\end{align*}
		
		Consequently, we obtain that
		\begin{align}
		\frac{4\pi}{\kappa_0+\kappa}\sigma_f&=\frac{1}{2}\nu - \frac{\kappa_0-\kappa}{\kappa_0+\kappa}\mathcal{K}^*\nu=\Big(\frac{1}{2}I - \frac{\kappa_0-\kappa}{\kappa_0+\kappa}\mathcal{K}^* \Big)\nu. \label{eq:3.4}
		\end{align}
		This completes the proof.
	\end{proof}
	
	Lemma \ref{lem:2kind} suggests that we might appeal to the classical well-posedness analysis of second kind integral equations in order to establish that the weak formulation \eqref{eq:weak1a} is well-posed. Broadly speaking, there are two popular approaches in the literature to establishing the well-posedness of second kind integral equations. 
	
	The traditional approach is based on recognising that the boundary integral operator $\mathcal{K} \colon L^2(\partial \Omega) \rightarrow L^2(\partial \Omega)$ is compact if $\Omega$ is a $C^1$ domain (which is indeed the case for the current problem). It follows that the BIE \eqref{eq:3.4} can be viewed as an operator equation on $L^2(\partial \Omega)$ involving a Fredholm operator of index 0, and well-posedness can be established by proving that the underlying operator is injective. This approach was first developed by E. B. Fabes, M. Jodeit, Jr., and N. M. Rivi\`ere in 1978 \cite{fabes1978potential}. In the general case when the domain $\Omega^-$ is only Lipschitz, the operator $\mathcal{K}$ is no longer compact on $L^2(\partial \Omega)$ but invertibility of the operator $\frac{1}{2}I - \frac{\kappa_0-\kappa}{\kappa_0+\kappa}\mathcal{K}$ on $L^2(\partial \Omega)$ can still be established as proven by Gregory Verchota in 1984 \cite{verchota1984layer}. These results can then be extended to the Sobolev spaces $H^s(\partial \Omega)$ (see, e.g., the work of Johannes Elschner \cite{Elschner1}).
	
	The primary issue with the above approaches is the following: Both analyses establish the invertibility of the underlying boundary integral operator \emph{indirectly}, by showing that the operator is injective. Thus, we are unable to obtain closed form expressions for the stability constants which means that we are unable to determine whether or not these constants are independent of $N$. 
	
	A second, more recent approach due to Steinbach and Wendland \cite{Wendland1, Steinbach1} (see also the book of Sauter and Schwab \cite{Schwab}) is based on variational techniques. This approach can be used to establish that the operator $\frac{1}{2}I -\frac{\kappa_0-\kappa}{\kappa_0+\kappa}\mathcal{K}$ is both bounded below and a contraction on $H^{\frac{1}{2}}(\partial \Omega)$ with respect to the inner product induced by the inverse single layer boundary operator $\mathcal{V}^{-1}$. This approach is based on the classical work of C. Neumann from the early $20^{\text{th}}$ century. Martin Costabel has published a fascinating article on the historical development of C. Neumann's work which also contains the core idea of the proof \cite{costabel2007some}.
	
	There are three fundamental issues with this variational approach. First, the lower bound constant for the operator $\frac{1}{2}I -\frac{\kappa_0-\kappa}{\kappa_0+\kappa}\mathcal{K}$ depends-- amongst others-- on the coercivity constant of the hypersingular boundary operator, and it is a priori unclear how this coercivity constant behaves as the number of objects $N$ is increased. Second, the analysis takes place in the Sobolev space $H^{\frac{1}{2}}(\partial \Omega)$ equipped with the inner-product induced by the inverse single layer boundary operator $\mathcal{V}^{-1}$, and this inner-product is completely non-local. Consequently, in order to qualitatively compare the relative error for different values of $N$, it becomes necessary to introduce norm equivalence constants and switch to the $H^{\frac{1}{2}}(\partial \Omega)$ norm. Unfortunately, these equivalence constants involve the continuity constant of $\mathcal{V}$, which increases as the number of objects $N$ increases. Finally, given our choice of approximation space, the Galerkin discretisation does not automatically inherit inf-sup stability from the infinite-dimensional case.

	In view of the preceding discussion, we felt it necessary to introduce a new well-posedness analysis for the weak formulation \eqref{eq:weak1a} and the Galerkin discretisation \eqref{eq:Galerkina}. The details of our analysis are presented in Section 4 but we remark briefly that we adopt an indirect approach and take advantage of the complementary decomposition of the space $H^{\frac{1}{2}}(\partial \Omega)$ introduced in Lemma \ref{lem:decomp}. We will show that this decomposition leads to a splitting of the weak formulation and Galerkin discretisation which then allows us to obtain suitable continuity and inf-sup constants that are indeed explicitly independent of the number of objects $N$.
	
	\section{Numerical Results}\label{sec:Numerics}

	The goal of this section is to briefly provide numerical evidence in support of our main results Theorems \ref{thm:1} and \ref{thm:2}. Our numerical experiments will therefore show that
	\begin{itemize}
		\item For a fixed number of degrees of freedom per sphere and geometries satisfying the assumptions {\textbf{A1)-A3)}}, the average error in the induced surface charge remains bounded as the number of open balls $N$ in the system is increased.
		\item For a fixed number of open balls $N$ in the system, the average error in the induced surface charge converges exponentially as the number of degrees of freedom per sphere is increased.
	\end{itemize}

	In addition, in order to anticipate future work on computational aspects of the numerical algorithm, we also provide numerical evidence indicating that {for a fixed number of degrees of freedom per sphere and geometries satisfying the assumptions {\textbf{A1)-A3)}}, the number of GMRES iterations required to solve the linear system arising from the Galerkin discretisation \eqref{eq:Galerkina} remains bounded as the number of open balls $N$ in the system is increased. Since we use the fast multipole method (FMM) in order to compute matrix vector products, these numerical results suggest that the computational cost of solving the underlying linear system scales as $\mathcal{O}(N)$.
		
			\begin{figure}[h!]
			\centering
			\begin{subfigure}{0.46\textwidth}
				\centering
				\includegraphics[width=0.95\textwidth]{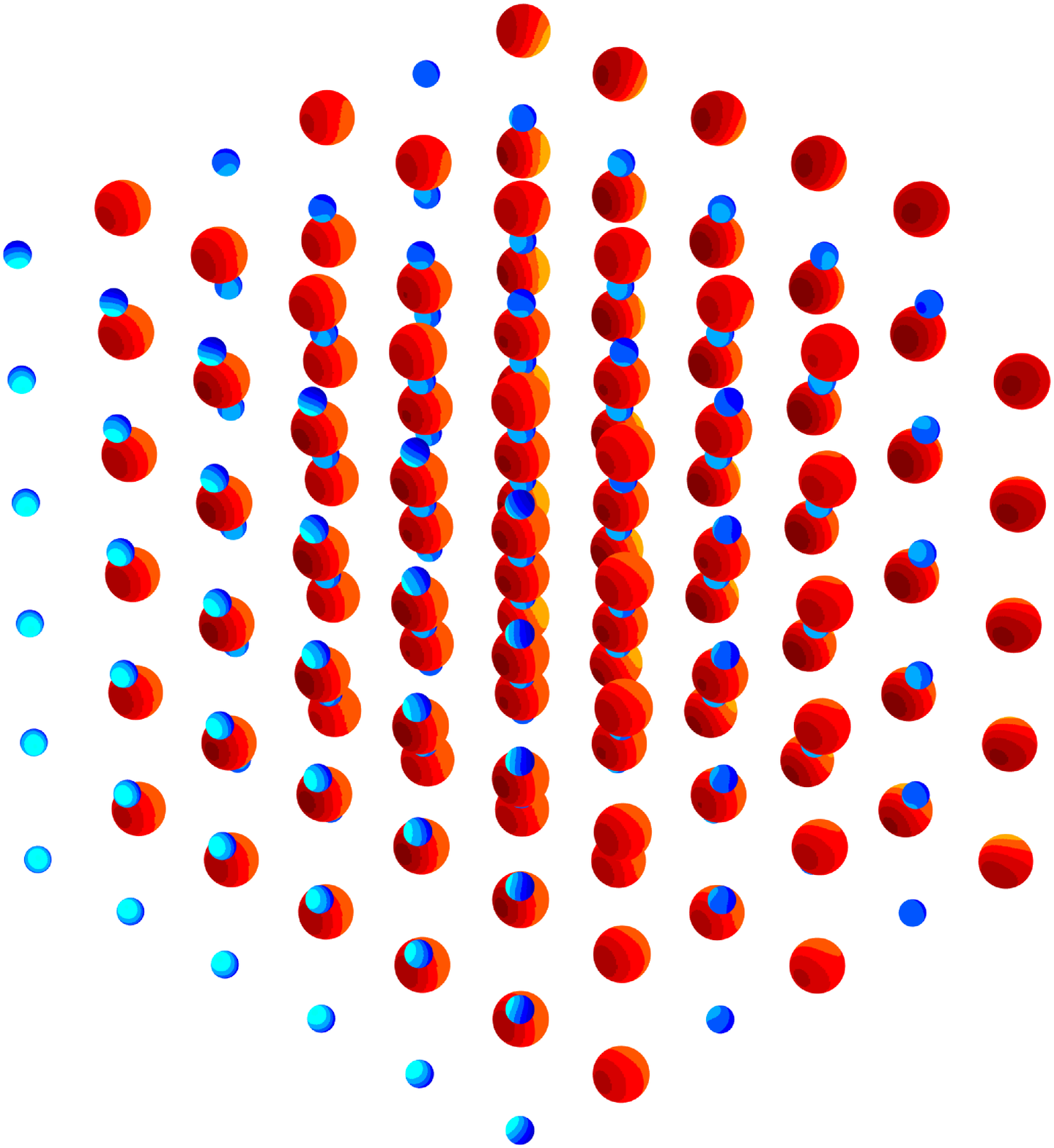} 
				\caption{Dielectric spheres arranged on a three dimensional, regular cubic lattice with edge length 10.}
				\label{fig:1}
			\end{subfigure}\hfill
			\begin{subfigure}{0.46\textwidth}
				\centering
				\includegraphics[width=0.95\textwidth]{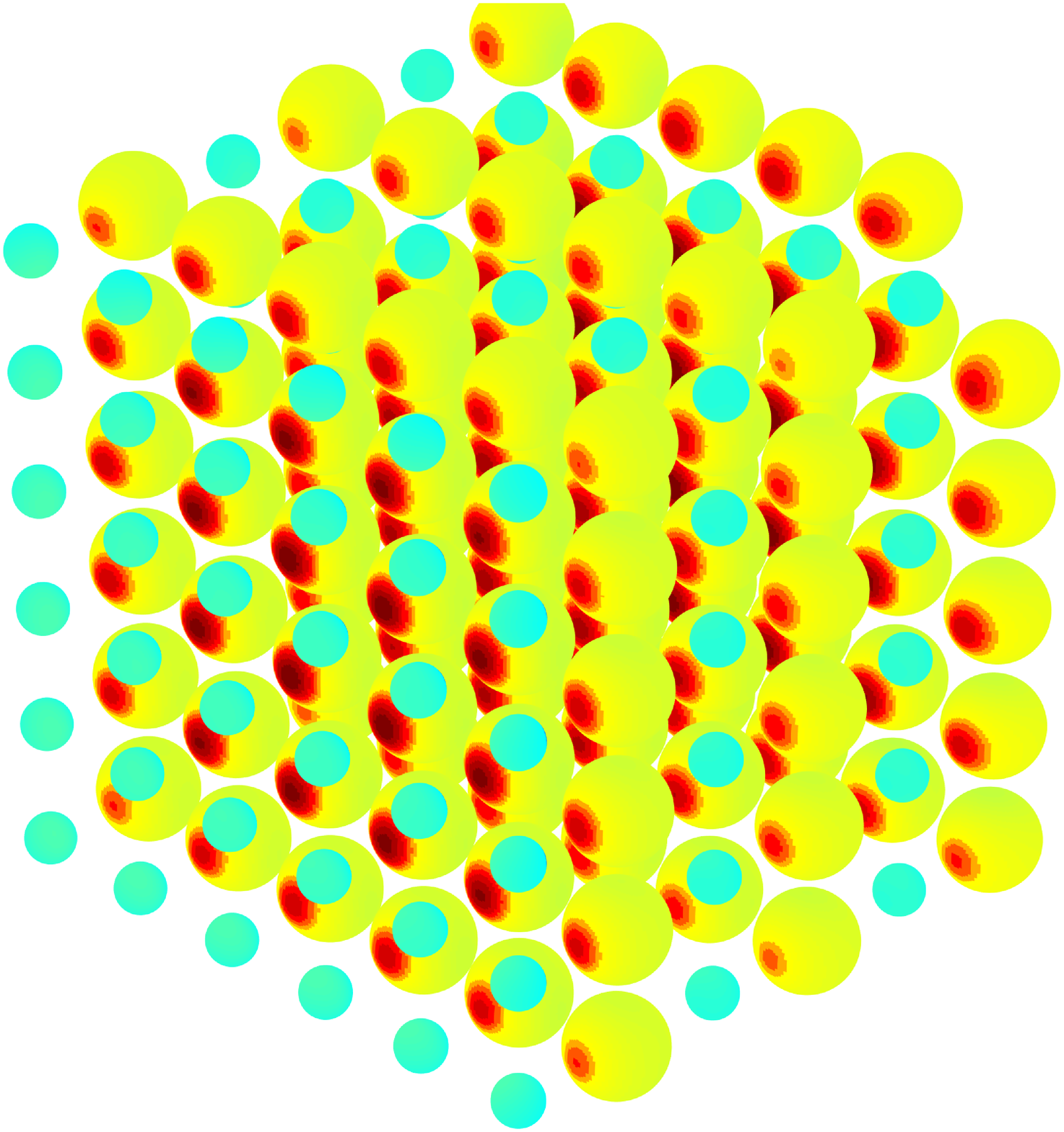} 
				\caption{Dielectric spheres arranged on a three dimensional, regular cubic lattice with edge length 5.}
				\label{fig:2}
			\end{subfigure}
			\caption{The geometric settings for both sets of numerical experiments.}
		\end{figure}

		{We consider the following geometric setting: The external medium is assumed to be vacuum which has a dielectric constant $\kappa_0 =1$. Two types of dielectric spheres are considered, one with radius 1, dielectric constant 10, and net negative free charge, and the other with radius 2, dielectric constant 5 and net positive free charge. Moreover, in order to include the effect of the minimal inter-sphere separation distance, we consider two sets of numerical experiments. The first involves the dielectric spheres arranged on a three dimensional, regular cubic lattice with edge length 10 and the other involves a similar lattice with a smaller edge length of 5 as displayed in Figures \ref{fig:1} and \ref{fig:2} respectively. All numerical simulations were run using a relative tolerance of $10^{-14}$.}
		
		\begin{figure}[h!]
		\centering
		\begin{subfigure}{0.45\textwidth}
			\centering
			\includegraphics[width=0.95\textwidth]{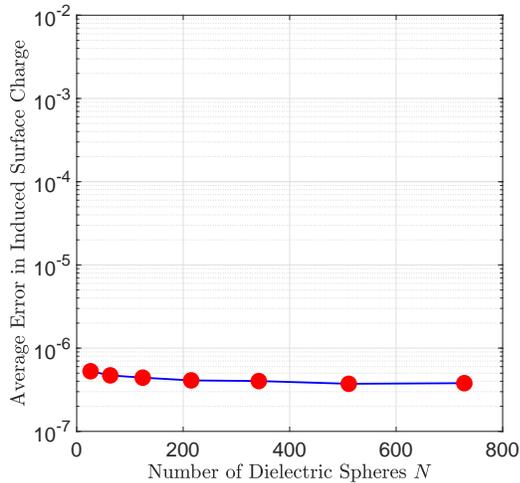} 
			\caption{Results for the cubic lattice with edge length 10.}
			\label{fig:3}
		\end{subfigure}\hfill
		\begin{subfigure}{0.45\textwidth}
			\centering
			\includegraphics[width=0.95\textwidth]{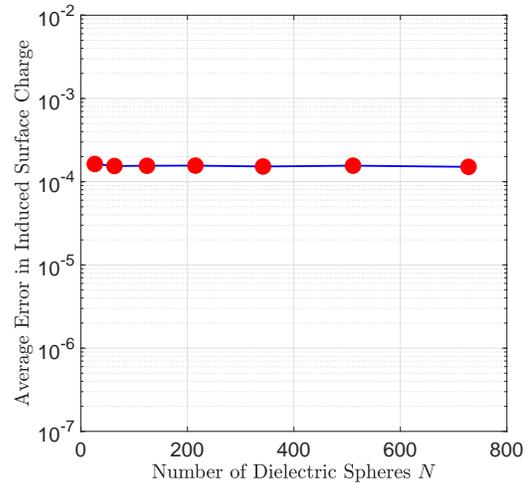} 
			\caption{Results for the cubic lattice with edge length 5.}
			\label{fig:4}
		\end{subfigure}
		\caption{Log-lin plot of the average error in the induced surface charge versus the number of dielectric spheres $N$. These numerical results support the conclusions of Theorem \ref{thm:1}.}
	\end{figure}
		
		\begin{figure}[h!]
		\centering
		\begin{subfigure}{0.46\textwidth}
			\centering
			\includegraphics[width=0.95\textwidth]{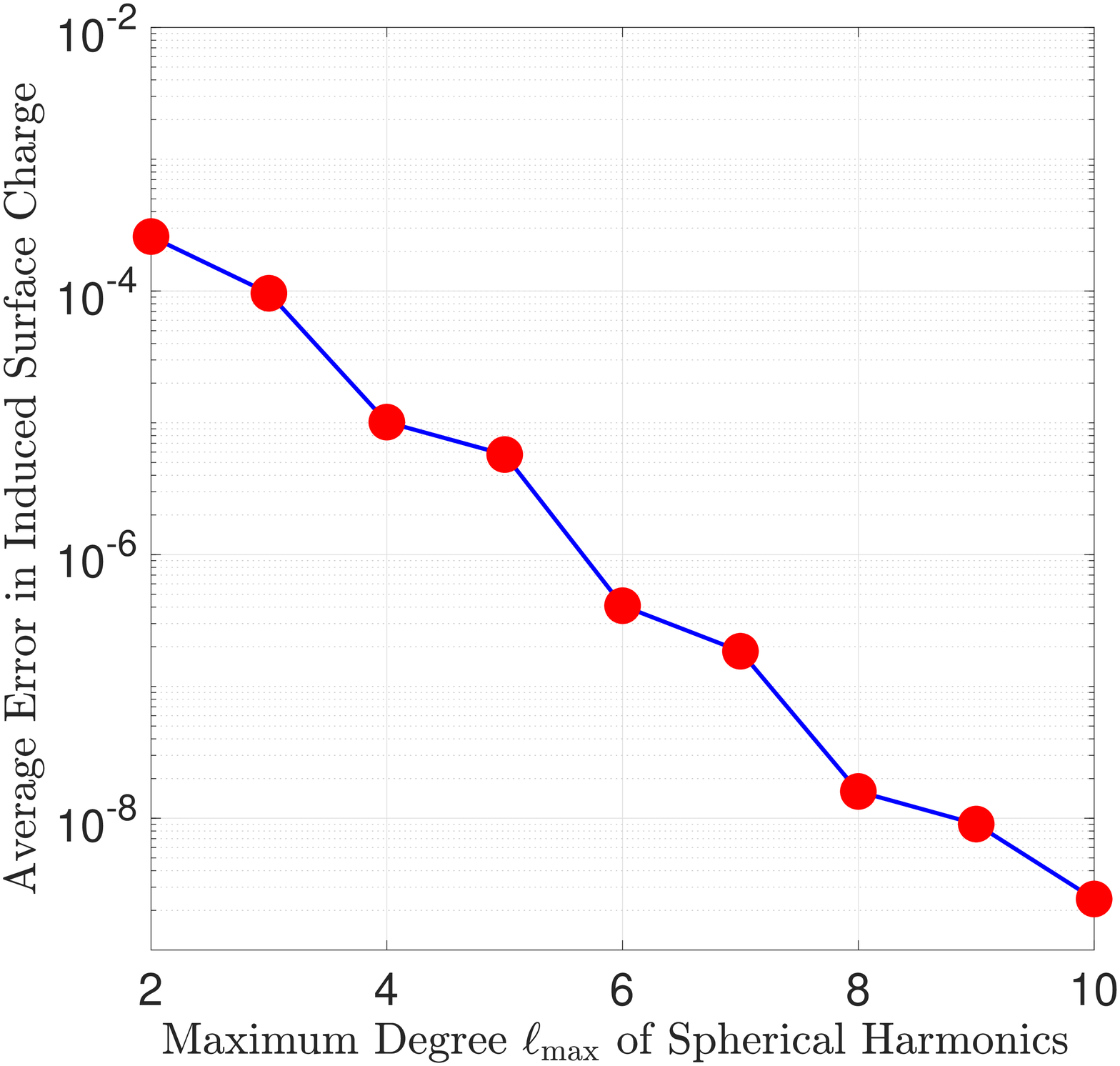} 
			\caption{Results for the cubic lattice with edge length 10.}
			\label{fig:5}
		\end{subfigure}\hfill
		\begin{subfigure}{0.46\textwidth}
			\centering
			\includegraphics[width=0.95\textwidth]{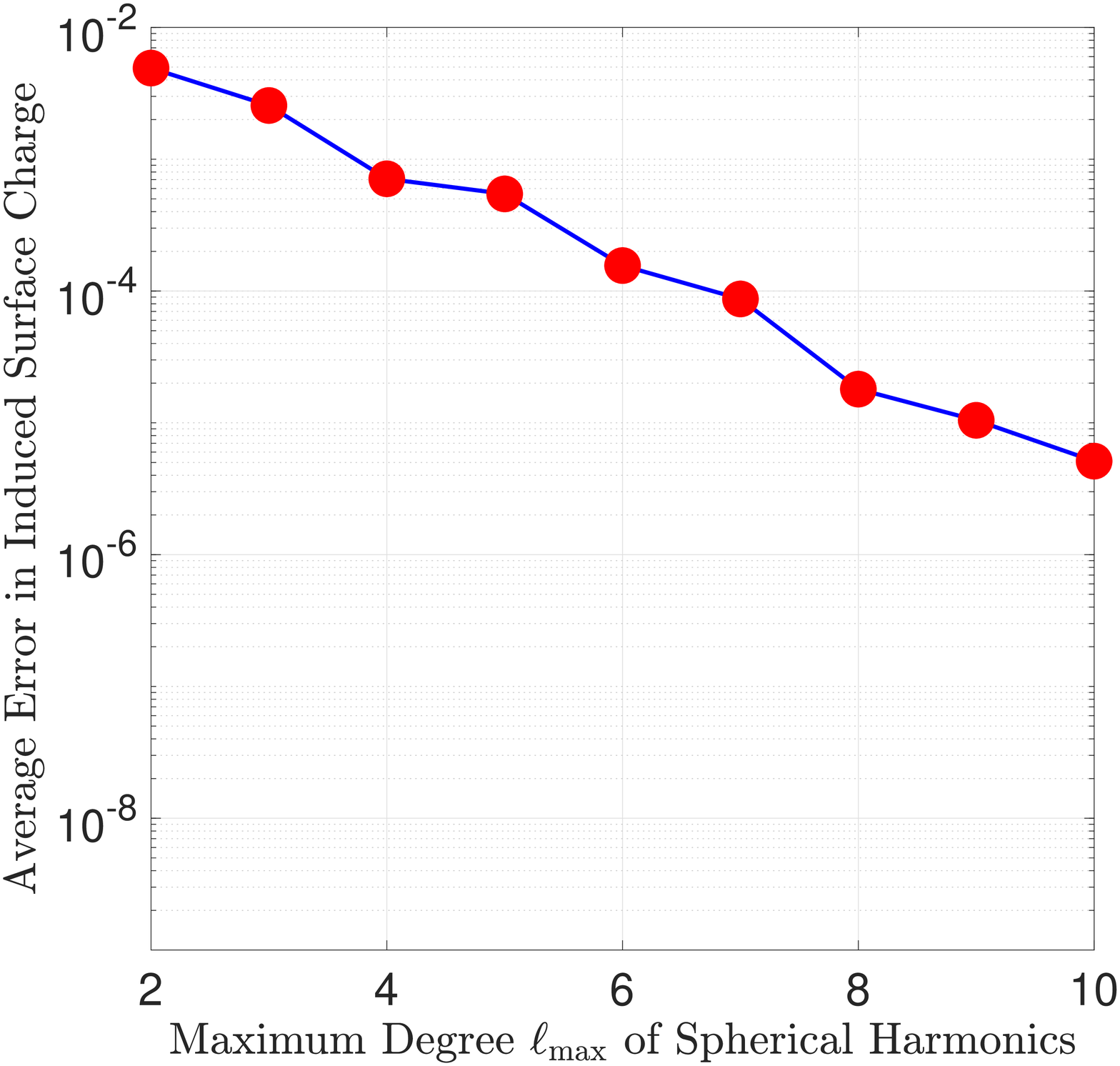} 
			\caption{Results for the cubic lattice with edge length 5.}
			\label{fig:6}
		\end{subfigure}
		\caption{Log-lin plot of the average error in the induced surface charge versus the maximum degree $\ell_{\max}$ of spherical harmonics in the approximation space on each open sphere. These numerical results support the conclusions of Theorem \ref{thm:2}.}
	\end{figure}
		
		{Figures \ref{fig:3} and \ref{fig:4} display the average error in the induced surface charge as the number of dielectric spheres $N$ is increased for the two types of lattices. The reference solution in both cases was constructed by setting the maximum degree of spherical harmonics in the approximation space on each sphere as $\ell_{\max}=20$. The approximate solutions were all constructed using $\ell_{\max}=6$.}

			\begin{figure}[h!]
			\centering
			\begin{subfigure}{0.46\textwidth}
				\centering
				\includegraphics[width=0.95\textwidth]{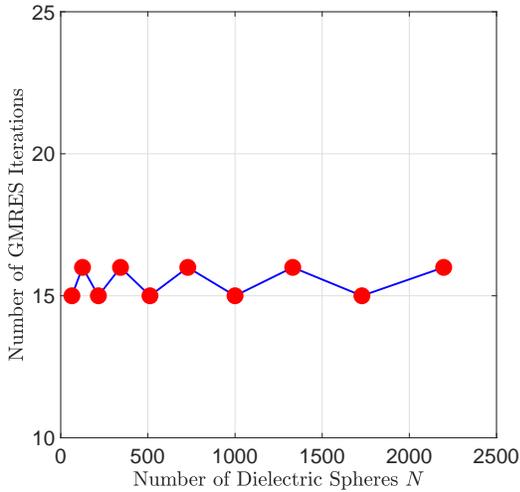} 
				\caption{Results for the cubic lattice with edge length 10.}
				\label{fig:7}
			\end{subfigure}\hfill
			\begin{subfigure}{0.46\textwidth}
				\centering
				\includegraphics[width=0.95\textwidth]{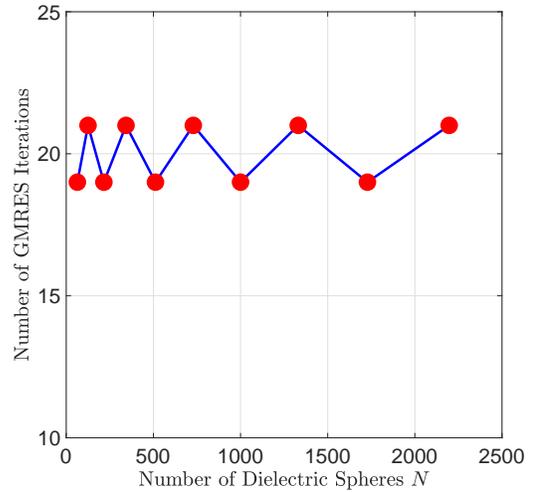} 
				\caption{Results for the cubic lattice with edge length 5.}
				\label{fig:8}
			\end{subfigure}
			\caption{The number of GMRES iterations required to solve the linear system arising from the Galerkin discretisation of the integral equation \eqref{eq:3.3a}.}
		\end{figure}

		{Figures \ref{fig:5} and \ref{fig:6} display the average error in the induced surface charge as the maximum degree of spherical harmonics $\ell_{\max}$ in the approximation space on each sphere is increased. The number of dielectric spheres was chosen as $N=215$. Once again, the reference solution in both cases was constructed by setting the maximum degree of spherical harmonics as $\ell_{\max}=20$.}

		{Finally, Figures \ref{fig:7} and \ref{fig:8} display the number of GMRES iterations required to solve the linear system arising from the Galerkin discretisation of the integral equation \eqref{eq:3.3a} for the two types of lattices. The maximum degree of spherical harmonics in the approximation space on each sphere was chosen as $\ell_{\max}=6$.}

		It is readily seen that these numerical results are in agreement with the conclusions of our main results Theorems \ref{thm:1} and Theorem \ref{thm:2}. {Furthermore, we observe that the average error and the number of GMRES iterations required to solve the linear system both increase as the minimum distance between two balls decreases.}
		
		\section{Proofs}\label{sec:Proofs}
		
		Assume the setting of Section \ref{sec:2a}. As mentioned in Section \ref{sec:3}, we need to introduce a new, indirect analysis in order to prove our main results Theorems \ref{thm:1} and \ref{thm:2}. To this end, we begin by observing that the single layer boundary operator $\mathcal{V} \colon H^{-\frac{1}{2}}(\partial \Omega) \rightarrow H^{\frac{1}{2}}(\partial \Omega)$ is a bijection. Therefore, the integral equation \eqref{eq:3.3a} can in fact be reformulated in terms of an unknown surface electrostatic potential $\lambda:=\mathcal{V}\nu \in H^{\frac{1}{2}}(\partial \Omega)$.\\ \\
		
		\noindent {\textbf{Integral Equation Formulation for the Electrostatic Potential}}~
		
		Let $\sigma_f \in H^{-\frac{1}{2}}(\partial \Omega)$. Find $\lambda \in H^{\frac{1}{2}}(\partial \Omega)$ with the property that
		\begin{align}\label{eq:3.3}
		\mathcal{A}\lambda = \lambda - \mathcal{V}\text{DtN}\Big(\frac{\kappa_0-\kappa}{\kappa_0} \lambda\Big)= \frac{4\pi}{\kappa_0}\mathcal{V}\sigma_f.
		\end{align}
		
		Naturally, the integral equation \eqref{eq:3.3} has a straightforward weak formulation. \\ \\
		
		\noindent {\textbf{Weak Formulation of the Integral Equation \eqref{eq:3.3}}}~
		
		Let $\sigma_f \in H^{-\frac{1}{2}}(\partial \Omega)$ and let $\mathcal{A} \colon H^{\frac{1}{2}}(\partial \Omega) \rightarrow H^{\frac{1}{2}}(\partial \Omega)$ be the operator defined through Definition \ref{def:A}. Find $\lambda \in H^{\frac{1}{2}}(\partial \Omega)$ such that for all $\sigma \in H^{-\frac{1}{2}}(\partial \Omega)$ it holds that
		\begin{align}\label{eq:weak1}
		\left\langle\sigma, \mathcal{A} \lambda\right\rangle_{\partial \Omega}= \frac{4\pi}{\kappa_0}\left \langle\sigma, \mathcal{V}\sigma_f \right \rangle_{\partial \Omega}.
		\end{align}
		
		The integral equation formulation \eqref{eq:3.3} now leads to a corresponding Galerkin discretisation for an unknown approximate surface electrostatic potential $\lambda_{\ell_{\max}} \in W^{\ell_{\max}}$.\\
		
		\noindent {\textbf{Galerkin Discretisation of the Integral Equation \eqref{eq:3.3}}}~
		
		Let $\sigma_f \in {H}^{-\frac{1}{2}}(\partial \Omega)$ and let $\ell_{\max} \in \mathbb{N}$. Find $\widehat{\lambda}_{\ell_{\max}} \in W^{\ell_{\max}}$ such that for all $\widehat{\psi}_{\ell_{\max}} \in W^{\ell_{\max}}$ it holds that
		\begin{align}\label{eq:Galerkin}
		\left(\widehat{\psi}_{\ell_{\max}}, \mathcal{A}\widehat{\lambda}_{\ell_{\max}}\right)_{L^2(\partial \Omega)}= \frac{4\pi}{\kappa_0}\left(\widehat{\psi}_{\ell_{\max}}, \mathcal{V}\sigma_f\right)_{L^2(\partial \Omega)}.
		\end{align}
		
		We emphasise that for the purpose of applications, one is typically interested in calculating either the induced surface charge $\nu \in H^{-\frac{1}{2}}(\partial \Omega)$ or the total electrostatic energy $\mathcal{E}$, which itself can be obtained directly from the induced surface charge $\nu$, and this is precisely why our main results Theorems \ref{thm:1} and \ref{thm:2} have been formulated in terms of the induced surface charge $\nu$ rather than the surface electrostatic potential $\lambda \in H^{\frac{1}{2}}(\partial \Omega)$. One may therefore wonder why we need introduce the weak formulation \eqref{eq:weak1} for the surface electrostatic potential $\lambda$ and its Galerkin discretisation \eqref{eq:Galerkin} at all.
		
		The key difficulty in our analysis is that the continuity constant of the relevant boundary integral operator and the discrete inf-sup constant both appear as pre-factors in the quasi-optimality bound and hence also the error estimates appearing in Theorems \ref{thm:1} and \ref{thm:2}. It therefore becomes essential to obtain both a continuity constant and an inf-sup constant that is independent of the number of balls $N$ in the $N$-body problem. Unfortunately, we have been unable to obtain such $N$-independent continuity and stability constants if we adopt a direct analysis of the weak formulation \eqref{eq:weak1a} for $\nu$ and its Galerkin discretisation \eqref{eq:Galerkina}.
		
		The weak formulation \eqref{eq:weak1} and the Galerkin discretisation \eqref{eq:Galerkin} have thus been introduced as \emph{analytical} tools that will aid our numerical analysis. As we will later show, the difficulties highlighted above can be avoided if we analyse first the weak formulation \eqref{eq:weak1} and its Galerkin discretisation \eqref{eq:Galerkin} involving the exact and approximate surface electrostatic potential and then obtain as a corollary, analogous results for the weak formulation \eqref{eq:weak1a} and the Galerkin discretisation \eqref{eq:Galerkina} and also proofs for Theorems \ref{thm:1} and \ref{thm:2}.
		
		We divide the remainder of this section into three parts. We first prove that the weak formulation \eqref{eq:weak1} and the Galerkin discretisation \eqref{eq:Galerkin} are well-posed, and obtain a \emph{partial} quasi-optimality result for the approximate surface electrostatic potential. Next, we prove that the weak formulation \eqref{eq:weak1a} and the Galerkin discretisation \eqref{eq:Galerkina} are also well-posed, and obtain an approximation result for the induced surface charge. Finally, we provide proofs for Theorems \ref{thm:1} and \ref{thm:2}.

		\subsection{Well-Posedness Analysis for the Surface Electrostatic Potential} \label{sec:4.1}
		\subsubsection{\textbf{{The Classical Analysis of the Infinite-Dimensional Problem and its Limitations}}}~
		The first step in the well-posedness analysis of the weak formulation \eqref{eq:weak1} of the boundary integral equation \eqref{eq:3.3} is to prove the continuity of the underlying linear boundary integral operator $\mathcal{A} \colon H^{\frac{1}{2}}(\partial \Omega) \rightarrow H^{\frac{1}{2}}(\partial \Omega)$ defined through Definition \ref{def:A}. 
		
		\begin{lemma}\label{lem:contin}
			Let the constants $c_{\mathcal{V}}$ and $c_{\mathcal{K}}$ be defined as in Properties 1 and 3 respectively of Section \ref{sec:2a}, let $\Vert \mathcal{K} \Vert_{L^2(\partial \Omega)}$ denote the $L^2$ operator norm of the double layer boundary operator $\mathcal{K} \colon H^{\frac{1}{2}}(\partial \Omega) \rightarrow H^{\frac{1}{2}}(\partial \Omega)$, and let the constant $C_{\mathcal{A}}$ be defined as
			\begin{align*}
			C_{\mathcal{A}} :=1+\max \Big\vert \frac{\kappa-\kappa_0}{\kappa_0}\Big \vert\sqrt{\Big(\frac{1}{2}+ \Vert \mathcal{K}\Vert_{L^2(\partial \Omega)}\Big)^2(1+\max r_i) + \frac{c_{\rm equiv}^2c_{\mathcal{K}}^3}{c_{\mathcal{V}}} }.
			\end{align*}
			
			Then the linear operator $\mathcal{A} \colon {H}^{\frac{1}{2}}(\partial \Omega) \rightarrow {H}^{\frac{1}{2}}(\partial \Omega)$ defined in Definition \ref{def:A} satisfies
			\begin{align*}
			\Vert \mathcal{A} \Vert_{\text{OP}}:= \sup_{0 \neq \lambda \in {H}^{\frac{1}{2}}(\partial \Omega)} \frac{ |||\mathcal{A}\lambda |||}{||| \lambda|||} \leq C_{\mathcal{A}}.
			\end{align*}
		\end{lemma}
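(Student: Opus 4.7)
The plan is to start from the definition of $\mathcal{A}$ and apply the triangle inequality,
\[
|||\mathcal{A}\lambda||| \;\leq\; |||\lambda||| \,+\, \Big|\!\Big|\!\Big|\mathcal{V}\,\text{DtN}\Big(\tfrac{\kappa_0-\kappa}{\kappa_0}\lambda\Big)\Big|\!\Big|\!\Big|,
\]
which already delivers the leading $1$ in $C_{\mathcal{A}}$ and reduces everything to bounding the second term by $\max|\tfrac{\kappa-\kappa_0}{\kappa_0}|$ times the square-root expression. The first key observation I would exploit is that the interior Dirichlet-to-Neumann operator is \emph{local}: since the components of $\Omega^-$ are disjoint balls, the interior harmonic extension decouples per sphere, and because $\alpha:=\tfrac{\kappa_0-\kappa}{\kappa_0}$ is constant on each $\partial\Omega_i$, one has $\text{DtN}(\alpha\lambda)=\alpha\,\text{DtN}\lambda$. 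Combined with the fact that $\text{DtN}$ annihilates $\mathcal{C}(\partial\Omega)$, the function $\mu:=\alpha\,\mathbb{P}_0^{\perp}\lambda$ lies in $\breve{H}^{1/2}(\partial\Omega)$; a sphere-by-sphere decomposition of $\langle\text{DtN}\mu,\mu\rangle_{\partial\Omega}$ together with the constancy of $\alpha$ on each $\partial\Omega_i$ then yields $|||\mu|||\leq\max|\alpha|\,|||\lambda|||$. Accordingly, $\phi:=\text{DtN}\mu\in\breve{H}^{-1/2}(\partial\Omega)$, and by the isometry from Remark~\ref{rem:isometry} we have $|||\phi|||^{*}=|||\mu|||\leq\max|\alpha|\,|||\lambda|||$, so the problem reduces to bounding $|||\mathcal{V}\phi|||$ in terms of $|||\phi|||^{*}$.

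For this, I would use the splitting
\[
|||\mathcal{V}\phi|||^{2} \;=\; \|\mathbb{P}_0\mathcal{V}\phi\|_{L^{2}(\partial\Omega)}^{2} \,+\, \langle\text{DtN}\,\mathcal{V}\phi,\,\mathcal{V}\phi\rangle_{\partial\Omega}
\]
and estimate the two summands separately; they will produce, respectively, the $(\tfrac{1}{2}+\|\mathcal{K}\|_{L^{2}})^{2}(1+\max r_{i})$ and $c_{\mathrm{equiv}}^{2}c_{\mathcal{K}}^{3}/c_{\mathcal{V}}$ contributions under the square root. For the first summand, I would use the algebraic identity $\mathcal{V}\,\text{DtN}=\tfrac{1}{2}I+\mathcal{K}$, obtained from $\text{DtN}\,\mathcal{V}=\tfrac{1}{2}I+\mathcal{K}^{*}$ by taking adjoints and using that $\text{DtN}$ is self-adjoint via Green's first identity. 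This rewrites $\mathcal{V}\phi=(\tfrac{1}{2}I+\mathcal{K})\mu$; the $L^{2}$-orthogonality of $\mathbb{P}_0$, the operator-norm bound $\|\tfrac{1}{2}I+\mathcal{K}\|_{L^{2}(\partial\Omega)}\leq\tfrac{1}{2}+\|\mathcal{K}\|_{L^{2}(\partial\Omega)}$, and the elementary spherical-harmonic estimate $\|v\|_{L^{2}(\partial\Omega)}^{2}\leq (1+\max r_{i})\,|||v|||^{2}$ (whose one-line verification uses that every mode of degree $\ell\geq 1$ on a sphere of radius $r$ contributes $r/\ell\leq r$, while the constant mode contributes exactly $r^{2}$) then combine to deliver the first summand. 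For the second summand, the identity $\text{DtN}\,\mathcal{V}=\tfrac{1}{2}I+\mathcal{K}^{*}$ turns the pairing into the $\mathcal{V}$-inner product $((\tfrac{1}{2}I+\mathcal{K}^{*})\phi,\phi)_{\mathcal{V}}$; a Cauchy-Schwarz step in the $\mathcal{V}$-norm, the classical Steinbach--Wendland contractivity $\|(\tfrac{1}{2}I+\mathcal{K}^{*})\phi\|_{\mathcal{V}}\leq c_{\mathcal{K}}\|\phi\|_{\mathcal{V}}$ (which is the quantitative content of Property~3), together with the coercivity-based estimate $\|\phi\|_{\mathcal{V}}^{2}=\langle\phi,\mathcal{V}\phi\rangle\leq |||\phi|||^{*}\,|||\mathcal{V}\phi|||$ and the norm-equivalence constant $c_{\mathrm{equiv}}$ needed to translate Property~1 (stated in $\|\cdot\|_{H^{-1/2}}$) into a statement in $|||\cdot|||^{*}$, then yield after rearrangement the $c_{\mathrm{equiv}}^{2}c_{\mathcal{K}}^{3}/c_{\mathcal{V}}$ factor.

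Assembling the two summands under a single square root and absorbing the $\max|\alpha|$ factor produces the claimed constant $C_{\mathcal{A}}$. The main obstacle is less any individual estimate than their collective \emph{$N$-independent} character: one must verify that $c_{\mathcal{V}}$, $\|\mathcal{K}\|_{L^{2}(\partial\Omega)}$, $c_{\mathcal{K}}$, and $c_{\mathrm{equiv}}$ all remain uniformly bounded (respectively bounded away from zero) under the hypotheses \textbf{A1}--\textbf{A3}. The $N$-independence of $c_{\mathcal{V}}$ is explicitly flagged in the excerpt as the subject of Lemmas~\ref{lem:Single} and~\ref{lem:Hassan2}, while that of the remaining constants reduces, via the well-separatedness assumption \textbf{A2} and the uniform radii bound \textbf{A1}, to classical per-sphere estimates.
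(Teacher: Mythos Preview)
Your overall architecture matches the paper's exactly: triangle inequality to isolate the $1$, then the splitting $|||\mathcal{V}\phi|||^{2}=\|\mathbb{P}_0\mathcal{V}\phi\|_{L^2}^2+\langle\text{DtN}\,\mathcal{V}\phi,\mathcal{V}\phi\rangle$, with the first summand handled via $\mathcal{V}\,\text{DtN}=\tfrac{1}{2}I+\mathcal{K}$ and the $L^2$--$|||\cdot|||$ comparison. That part is fine and essentially identical to what the paper does (the paper works with $\lambda_\kappa=\alpha\lambda$ rather than $\mu=\alpha\mathbb{P}_0^\perp\lambda$, but since $(\tfrac{1}{2}I+\mathcal{K})$ annihilates piecewise constants the two are equivalent).

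The gap is in your treatment of the second summand. After the Cauchy--Schwarz step in the $\mathcal{V}$-norm you obtain $B\leq c_{\mathcal{K}}\|\phi\|_{\mathcal{V}}^2$, which supplies only \emph{one} factor of $c_{\mathcal{K}}$. Your proposed continuation, bounding $\|\phi\|_{\mathcal{V}}^2\leq|||\phi|||^{*}\,|||\mathcal{V}\phi|||$ and then ``rearranging'', puts $|||\mathcal{V}\phi|||$ back on the right-hand side; solving the resulting quadratic inequality does yield \emph{a} bound, but not the stated constant $c_{\rm equiv}^2c_{\mathcal{K}}^3/c_{\mathcal{V}}$, and neither the coercivity of $\mathcal{V}$ nor $c_{\rm equiv}$ enters that computation in the way you suggest. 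The paper instead stays in the $\mathcal{V}^{-1}$ inner product on $H^{1/2}$: writing $B=((\mathcal{V}\text{DtN})^2\lambda_\kappa,\mathcal{V}\text{DtN}\lambda_\kappa)_{\mathcal{V}^{-1}}$ and applying the contraction $\|\mathcal{V}\text{DtN}\|_{\mathcal{V}^{-1}}\leq c_{\mathcal{K}}$ three times gives $B\leq c_{\mathcal{K}}^3\|\lambda_\kappa\|_{\mathcal{V}^{-1}}^2$, after which $\|\lambda_\kappa\|_{\mathcal{V}^{-1}}^2\leq c_{\mathcal{V}}^{-1}\|\lambda_\kappa\|_{H^{1/2}}^2\leq c_{\rm equiv}^2c_{\mathcal{V}}^{-1}|||\lambda_\kappa|||^2$ supplies the remaining factors. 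Equivalently, in your variables one needs $\|\phi\|_{\mathcal{V}}^2=\|\mathcal{V}\text{DtN}\mu\|_{\mathcal{V}^{-1}}^2\leq c_{\mathcal{K}}^2\|\mu\|_{\mathcal{V}^{-1}}^2$ followed by the same coercivity step; that is the missing link.

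One further remark: your closing paragraph asserts that $\|\mathcal{K}\|_{L^2(\partial\Omega)}$ is $N$-independent under \textbf{A1}--\textbf{A3}. The paper takes the opposite view (Remark~\ref{rem:Degrade}): $C_{\mathcal{A}}$ \emph{may} grow with $N$ through $\|\mathcal{K}\|_{L^2}$, which is precisely why the reduced bilinear form $\tilde a$ is introduced afterwards. The lemma itself makes no $N$-independence claim, so this does not affect the proof, but your reading of the result's role is inverted.
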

		\begin{proof}
			Let $\lambda \in {H}^{\frac{1}{2}}(\partial \Omega)$. Then it holds that
			\begin{align*}
			|||\mathcal{A} \lambda ||| = \Big|\Big|\Big|\lambda - \mathcal{V} \text{DtN}\Big(\frac{\kappa_0-\kappa}{\kappa_0}\lambda\Big)\Big|\Big|\Big| &\leq |||\lambda|||+ \Big|\Big|\Big|\mathcal{V} \text{DtN}\Big(\frac{\kappa_0-\kappa}{\kappa_0}\lambda\Big)\Big|\Big|\Big|.
			\end{align*}
			
			Let $\lambda_{\kappa}:= \frac{\kappa_0-\kappa}{\kappa_0}\lambda$. Using Definition \ref{def:NewNorm} of the $||| \cdot |||$ norm we obtain
			\begin{align*}
			|||\mathcal{V} \text{DtN}\lambda_{\kappa}|||^2&= \left\Vert\mathbb{P}_0 \mathcal{V} \text{DtN} \lambda_{\kappa}\right\Vert^2_{L^2(\partial \Omega)}+\left\langle \text{DtN}\mathcal{V} \text{DtN}\lambda_{\kappa}, \mathcal{V} \text{DtN}\lambda_{\kappa}\right\rangle_{\partial \Omega}.
			\end{align*}

			Let us first focus on the second term. Using standard results on boundary integral operators (see, e.g., \cite[Section 3.7, Section 3.8, Theorem 3.5.3 and Theorem 3.8.7]{Schwab}), we obtain
			\begin{align*}
			\left\langle \text{DtN}\mathcal{V} \text{DtN}\lambda_{\kappa}, \mathcal{V} \text{DtN}\lambda_{\kappa}\right\rangle_{\partial \Omega}&=\left\langle \mathcal{V}^{-1}\mathcal{V}\text{DtN}\mathcal{V} \text{DtN}\lambda_{\kappa}, \mathcal{V} \text{DtN}\lambda_{\kappa}\right\rangle_{\partial \Omega}\\
			&=\left((\mathcal{V}\text{DtN})^2\lambda_{\kappa}, \mathcal{V} \text{DtN}\lambda_{\kappa}\right)_{\mathcal{V}^{-1}}\\
			&\leq \big\Vert(\mathcal{V}\text{DtN})^2\lambda_{\kappa}\big\Vert_{\mathcal{V}^{-1}}\; \big\Vert  \mathcal{V} \text{DtN}\lambda_{\kappa}\big\Vert_{\mathcal{V}^{-1}} \\
			&\leq c_{\mathcal{K}}^2\left\Vert\lambda_{\kappa}\right\Vert_{\mathcal{V}^{-1}}\; c_{\mathcal{K}}\left\Vert\lambda_{\kappa}\right\Vert_{\mathcal{V}^{-1}}\\
			& \leq\frac{c_{\mathcal{K}}^3c_{\rm equiv}^2}{c_{\mathcal{V}}}|||\lambda_{\kappa}|||^2\leq \frac{c_{\mathcal{K}}^3c_{\rm equiv}^2}{c_{\mathcal{V}}} \max \Big\vert \frac{\kappa-\kappa_0}{\kappa_0}\Big \vert^2 |||\lambda|||^2.
			\end{align*}

			Next, we consider the first term. The Calder\'on identities (see, e.g., \cite[Theorem 3.8.7]{Schwab}) imply that
			\begin{align*}
			\left\Vert\mathbb{P}_0 \mathcal{V} \text{DtN} \lambda_{\kappa}\right\Vert_{L^2(\partial \Omega)}^2=&\left\Vert\mathbb{P}_0 \Big(\frac{1}{2}I + \mathcal{K} \Big)\lambda_{\kappa}\right\Vert^2_{L^2(\partial \Omega)}
			\leq\Big(\frac{1}{2}+ \Vert \mathcal{K}\Vert_{L^2(\partial \Omega)}\Big)^2\left\Vert\lambda_{\kappa}\right\Vert^2_{L^2(\partial \Omega)}\\
			\leq& \Big(\frac{1}{2}+ \Vert \mathcal{K}\Vert_{L^2(\partial \Omega)}\Big)^2\max \Big\vert \frac{\kappa-\kappa_0}{\kappa_0}\Big \vert^2 \Vert\lambda\Vert^2_{L^2(\partial \Omega)}.
			\end{align*}
			
			Next, we observe that 
			\begin{align*}
			\Vert\lambda\Vert^2_{L^2(\partial \Omega)}&=\Vert \mathbb{P}_0\lambda\Vert^2_{L^2(\partial \Omega)}+ \Vert \mathbb{P}_0^{\perp}\lambda\Vert^2_{L^2(\partial \Omega)} =||| \mathbb{P}_0 \lambda|||^2 + \Vert \mathbb{P}_0^{\perp}\lambda\Vert^2_{L^2(\partial \Omega)}\\
			&\leq ||| \mathbb{P}_0 \lambda|||^2 + \max r_i ||| \mathbb{P}^{\perp}_0 \lambda|||^2 \leq (1+\max r_i) ||| \lambda|||^2.
			\end{align*}
			
			We conclude that
			\begin{align*}
			\Big|\Big|\Big|\mathcal{V}\text{DtN}\Big(\frac{\kappa_0-\kappa}{\kappa_0}\lambda\Big)\Big|\Big|\Big| \leq \max \Big\vert \frac{\kappa-\kappa_0}{\kappa_0}\Big \vert\sqrt{\Big(\frac{1}{2}+ \Vert \mathcal{K}\Vert_{L^2(\partial \Omega)}\Big)^2(1+\max r_i)+ \frac{c_{\mathcal{K}}^3c_{\rm equiv}^2}{c_{\mathcal{V}}} } ||| \lambda|||.
			\end{align*}
			
			The proof now follows.
		\end{proof}
		
		\begin{remark}\label{rem:Degrade}
			Consider the setting of Lemma \ref{lem:contin}. The continuity constant $C_{\mathcal{A}}$ of the operator $\mathcal{A}$ as determined in Lemma \ref{lem:contin} depends on the operator norm of the double layer boundary operator $\mathcal{K}$. Standard bounds for this operator norm depend on the diameter of the domain $\Omega^-$ (see, e.g., \cite[Chapter 7]{Folland} or \cite[Chapter 3]{Schwab}), which implies that the continuity constant $C_{\mathcal{A}}$ could potentially increase as the number of open balls $N$ increases. 
			
			Notice that the dependence of the continuity constant $C_{\mathcal{A}}$ on the operator norm $\Vert \mathcal{K}\Vert_{L^2(\partial \Omega)}$ appears only when evaluating the operator norm $\Vert \mathcal{V}\text{DtN} \Vert_{L^2(\partial \Omega)}$. In principle, it is possible to refine the estimate for the operator norm $\Vert \mathcal{V}\text{DtN} \Vert_{L^2(\partial \Omega)}$ using the addition theorem for spherical harmonics and the so-called Multipole-to-Local operators introduced by Greengard and Rokhlin \cite{greengard1}. Unfortunately, it turns out that for a completely arbitrary geometry $\Omega^-= \cup_{i=1}^N \Omega_i$, it is \emph{not possible to eliminate the dependence of the continuity constant $C_{\mathcal{A}}$ on the number of open balls $N$}. Indeed, an explicit counter-example can be constructed.
			
			Obviously, this degradation of the continuity constant poses a serious problem if wish to obtain error estimates independent of $N$. Fortunately, as we will now show, it is possible to circumvent this issue by taking advantage of the particular structure of the BIEs \eqref{eq:3.3a} and \eqref{eq:3.3}.	
		\end{remark}
		
		\subsubsection{\textbf{The New Analysis of the Infinite-Dimensional Problem}}~
		In principle, the next step in our analysis would be to prove that the weak formulation \eqref{eq:weak1} is well-posed. In view of Remark \ref{rem:Degrade} however, we cannot obtain $N$-independent stability and continuity constants using a straightforward analysis of the boundary integral operator $\mathcal{A}$, and we must therefore adopt a smarter, indirect approach. To this end, we will appeal to the complementary decompositions of the spaces $H^{\frac{1}{2}}(\partial \Omega)$ and $H^{-\frac{1}{2}}(\partial \Omega)$ introduced in Lemma \ref{lem:decomp}. This complementary decomposition, {together with Remark \ref{rem:Review_3}}, allows us to rewrite the weak formulation \eqref{eq:weak1} in terms of trial and test functions that belong to the spaces $\mathcal{C}(\partial \Omega)$, $\breve{H}^{\frac{1}{2}}(\partial \Omega)$, and $\breve{H}^{-\frac{1}{2}}(\partial \Omega)$.\\
		
		\noindent {\textbf{Modified Weak Formulation of the Integral Equation \eqref{eq:3.3}}}~
		
		Let $\sigma_f \in H^{-\frac{1}{2}}(\partial \Omega)$. Find functions $(\lambda_0, \tilde{\lambda}) \in \mathcal{C}(\partial \Omega) \times \breve{H}^{\frac{1}{2}}(\partial \Omega)$ such that for all test functions $(\sigma_0, \tilde{\sigma}) \in \mathcal{C}(\partial \Omega) \times \breve{H}^{-\frac{1}{2}}(\partial \Omega)$ it holds that
		\begin{align}\label{eq:weak2a}
		\left\langle \sigma_0, \lambda_0\right\rangle_{\partial \Omega}- \left\langle \sigma_0, \mathcal{V}\text{DtN}\left(\frac{\kappa_0-\kappa}{\kappa_0}\tilde{\lambda}\right)\right\rangle_{\partial \Omega}&= \frac{4\pi}{\kappa_0}\left \langle \sigma_0,\mathcal{V}\sigma_f \right \rangle_{\partial \Omega},\\[0.5em]
		\left\langle \tilde{\sigma}, \tilde{\lambda}\right\rangle_{\partial \Omega}- \left\langle \tilde{\sigma}, \mathcal{V}\text{DtN}\left(\frac{\kappa_0-\kappa}{\kappa_0}\tilde{\lambda}\right)\right\rangle_{\partial \Omega}&= \frac{4\pi}{\kappa_0}\left \langle\tilde{\sigma},\mathcal{V}\sigma_f \right \rangle_{\partial \Omega}. \label{eq:weak2b}
		\end{align}
		
		It is a simple exercise to prove that the modified weak formulation \eqref{eq:weak2a}-\eqref{eq:weak2b} is indeed equivalent to the weak formulation \eqref{eq:weak1}.\\
		
		Consider now Equations \eqref{eq:weak2a} and \eqref{eq:weak2b}. We observe that Equation \eqref{eq:weak2b} involves only the unknown function $\tilde{\lambda} \in  \breve{H}^{\frac{1}{2}}(\partial \Omega)$. It is therefore clear that if Equation \eqref{eq:weak2b} is uniquely solvable, then Equation \eqref{eq:weak2a} is also uniquely solvable, and hence the weak formulation \eqref{eq:3.3} is well-posed. Following standard practice in functional analysis, we prove unique solvability of Equation \eqref{eq:weak2b}  by establishing that the underlying reduced bilinear form is bounded and satisfies the inf-sup condition.
		
		\begin{remark}
			In principle, one could use the same complementary decomposition to split the weak formulation \eqref{eq:weak1a} for the induced surface charge $\nu$. In this case however, we do not obtain the useful ``upper-triangular'' structure highlighted above, and consequently our subsequent analysis cannot be applied. 
		\end{remark}
		
		\begin{definition}\label{def:Atilde}
			We define the ``reduced'' bilinear form $\tilde{a} \colon \breve{H}^{\frac{1}{2}}(\partial \Omega) \times \breve{H}^{-\frac{1}{2}}(\partial \Omega)\rightarrow \mathbb{R}$ as the mapping with the property that for all $\tilde{\lambda} \in \breve{H}^{\frac{1}{2}}(\partial \Omega)$ and all $\tilde{\sigma} \in \breve{H}^{-\frac{1}{2}}(\partial \Omega)$ it holds that
			\begin{align*}
			\tilde{a}(\tilde{\lambda}, \tilde{\sigma}) := \left\langle \tilde{\sigma}, \tilde{\lambda}\right\rangle_{\partial \Omega}- \left\langle  \tilde{\sigma}, \mathcal{V}\text{DtN}\left(\frac{\kappa_0-\kappa}{\kappa_0}\tilde{\lambda}\right)\right\rangle_{\partial \Omega}.
			\end{align*}
		\end{definition}

		We first prove that the reduced bilinear form $\tilde{a}$ is bounded.
		
		\begin{lemma}\label{lem:contin2}
			Let the constant $C_{\tilde{\mathcal{A}}}$ be defined as
			\begin{align}\label{eq:contin}
			C_{\tilde{\mathcal{A}}} :=1+\max\left \vert \frac{\kappa - \kappa_0}{\kappa_0}\right \vert\cdot \left(\frac{c^{\frac{3}{2}}_{\mathcal{K}}c_{\rm equiv}}{\sqrt{c_{\mathcal{V}}}} \right),
			\end{align}
			and let the bilinear form $\tilde{a} \colon \breve{H}^{\frac{1}{2}}(\partial \Omega) \times \breve{H}^{-\frac{1}{2}}(\partial \Omega)\rightarrow \mathbb{R}$ be defined as in Definition \ref{def:Atilde}. Then for all $\tilde{\lambda} \in \breve{H}^{\frac{1}{2}}(\partial \Omega)$ and all $\tilde{\sigma} \in \breve{H}^{-\frac{1}{2}}(\partial \Omega)$ it holds that
			\begin{align*}
			\vert \tilde{a}(\tilde{\lambda}, \tilde{\sigma}) \vert  \leq C_{\tilde{\mathcal{A}}} ||| \lambda|||\, ||| \sigma|||^*.
			\end{align*}
		\end{lemma}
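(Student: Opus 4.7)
The plan is to split the bound on $|\tilde a(\tilde\lambda,\tilde\sigma)|$ according to the two summands in Definition \ref{def:Atilde}. The first summand, $|\langle\tilde\sigma,\tilde\lambda\rangle_{\partial\Omega}|$, is bounded immediately by $|||\tilde\sigma|||^{*}\,|||\tilde\lambda|||$ from the definition of the dual norm, and contributes the $1$ in $C_{\tilde{\mathcal{A}}}$. All of the real work lies in bounding the second summand $|\langle\tilde\sigma,\mathcal V\,\text{DtN}\,\tilde\mu\rangle_{\partial\Omega}|$, where $\tilde\mu := \frac{\kappa_0-\kappa}{\kappa_0}\tilde\lambda$; the goal is to do so \emph{without} the $L^2$ operator norm of $\mathcal K$ that appeared in Lemma~\ref{lem:contin} and carried the $N$-dependence highlighted in Remark~\ref{rem:Degrade}.

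First I would observe that since $\frac{\kappa_0-\kappa}{\kappa_0}$ is piecewise constant, multiplication by it preserves the zero-mean-per-sphere property, so $\tilde\mu\in\breve H^{\frac12}(\partial\Omega)$, and using the sphere-wise locality of $\text{DtN}$ one obtains $|||\tilde\mu|||\le\max|\frac{\kappa-\kappa_0}{\kappa_0}|\cdot|||\tilde\lambda|||$. The decisive observation is then that $\tilde\sigma\in\breve H^{-\frac12}(\partial\Omega)$, so Remark~\ref{rem:Review_3} kills the $\mathbb P_0$-component of the second argument in the duality pairing:
\begin{align*}
\langle\tilde\sigma,\mathcal V\,\text{DtN}\,\tilde\mu\rangle_{\partial\Omega} \;=\; \langle\tilde\sigma,\mathbb P_0^{\perp}\mathcal V\,\text{DtN}\,\tilde\mu\rangle_{\partial\Omega}.
\end{align*}
By the dual-norm inequality it therefore suffices to control $|||\mathbb P_0^{\perp}\mathcal V\,\text{DtN}\,\tilde\mu|||$ in terms of $|||\tilde\mu|||$.

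For this last quantity, Definition~\ref{def:NewNorm} gives $|||\mathbb P_0^{\perp}\phi|||^{2}=\langle\text{DtN}(\mathbb P_0^{\perp}\phi),\mathbb P_0^{\perp}\phi\rangle_{\partial\Omega}$. Since $\text{DtN}$ annihilates piecewise constants and takes its values in $\breve H^{-\frac12}(\partial\Omega)$ (see Remark~\ref{rem:isometry}), two further applications of Remark~\ref{rem:Review_3} collapse this expression to $\langle\text{DtN}\,\mathcal V\,\text{DtN}\,\tilde\mu,\mathcal V\,\text{DtN}\,\tilde\mu\rangle_{\partial\Omega}$, which is precisely the quantity estimated in the second half of the proof of Lemma~\ref{lem:contin}. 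Rewriting $\text{DtN}\,\mathcal V\,\text{DtN}=\mathcal V^{-1}(\mathcal V\,\text{DtN})^{2}$, applying Cauchy--Schwarz in the $\mathcal V^{-1}$ inner product, invoking the contraction bound $\|\mathcal V\,\text{DtN}\|_{\mathcal V^{-1}}\le c_{\mathcal K}$ from Property~3, and finally using the one-sided equivalence $\|\cdot\|_{\mathcal V^{-1}}\le\frac{c_{\mathrm{equiv}}}{\sqrt{c_{\mathcal V}}}|||\cdot|||$, one arrives at $|||\mathbb P_0^{\perp}\mathcal V\,\text{DtN}\,\tilde\mu|||\le\frac{c_{\mathcal K}^{3/2}c_{\mathrm{equiv}}}{\sqrt{c_{\mathcal V}}}|||\tilde\mu|||$. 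Re-assembling the estimates yields $|\tilde a(\tilde\lambda,\tilde\sigma)|\le C_{\tilde{\mathcal{A}}}|||\tilde\lambda|||\,|||\tilde\sigma|||^{*}$ with the claimed constant.

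The main obstacle is spotting the $\mathbb P_0^{\perp}$-projection trick: without it, the natural route is to expand $|||\mathcal V\,\text{DtN}\,\tilde\mu|||^{2}=\|\mathbb P_0\mathcal V\,\text{DtN}\,\tilde\mu\|_{L^{2}(\partial\Omega)}^{2}+\langle\text{DtN}\,\mathcal V\,\text{DtN}\,\tilde\mu,\mathcal V\,\text{DtN}\,\tilde\mu\rangle_{\partial\Omega}$, and the first summand then forces one through $\|\mathcal K\|_{L^{2}(\partial\Omega)}$ exactly as in Lemma~\ref{lem:contin}, re-introducing $N$-dependence. Everything else is bookkeeping that re-uses calculations already performed there.
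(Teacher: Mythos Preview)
Your proposal is correct and follows essentially the same route as the paper: the paper defines $\tilde{\mathcal{A}}:=\mathbb{P}_0^{\perp}\mathcal{A}\mathbb{P}_0^{\perp}$, notes that $|||\mathbb{P}_0^{\perp}\lambda|||\le|||\lambda|||$, and then declares the proof ``identical to the first part of the proof of Lemma~\ref{lem:contin} with one minor modification'' --- that modification being precisely the one you spell out, namely that the outer $\mathbb{P}_0^{\perp}$ kills the $\Vert\mathbb{P}_0\mathcal{V}\,\mathrm{DtN}\,\lambda_\kappa\Vert_{L^2}$ term and with it the dependence on $\Vert\mathcal{K}\Vert_{L^2(\partial\Omega)}$. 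Your write-up is in fact more explicit than the paper's about why this works (via Remark~\ref{rem:Review_3} and the $\mathrm{DtN}$ range/kernel properties), but the mathematical content is the same.
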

		\begin{proof}
			Let the linear operator $\tilde{\mathcal{A}} \colon \breve{H}^{\frac{1}{2}}(\partial \Omega)  \rightarrow \breve{H}^{\frac{1}{2}}(\partial \Omega)$ be defined as $\tilde{A}:= \mathbb{P}_0^{\perp} \mathcal{A}\mathbb{P}_0^{\perp}$. Then $\tilde{\mathcal{A}}$ is the linear operator associated with the reduced bilinear form $\tilde{a}$. {Since $||| \mathbb{P}_0^{\perp} \lambda ||| \leq ||| \lambda |||$ for all $\lambda \in H^{\frac{1}{2}}(\partial \Omega)$, the proof becomes identical to the first part of the proof of Lemma \ref{lem:contin} with one minor modification.}
		\end{proof}
		
		\begin{remark}\label{rem:HassanV}
			Consider the setting of Lemma \ref{lem:contin2} and the continuity constant $C_{\tilde{\mathcal{A}}}$ of the modified boundary integral operator $\tilde{\mathcal{A}}$. We observe that the constant $c_{\mathcal{K}}$ is bounded by one, and therefore the only non quantified constant appearing in the expression of $C_{\tilde{\mathcal{A}}}$ is the coercivity constant $c_{\mathcal{V}}$. A priori, it is not clear how this coercivity constant depends on the geometrical setting of our problem including the number of open balls $N$ in our system. The next step in our analysis therefore, is to obtain a closed form expression for this coercivity constant and to show in particular that it does not explicitly depend on $N$.
		\end{remark}
		
		{
			We first require the following lemma:
			\begin{lemma}\label{lem:Single}
				There exist constants $c_{\rm{int}}, c_{\rm{ext}} >0$ that are independent of the number $N$ of open balls such that for all harmonic functions $v \in H^1(\Omega^-)$ and $w \in H^1(\Omega^+)$ it holds that
				\begin{align*}
				\Vert \gamma^-_N v\Vert_{H^{-\frac{1}{2}}(\partial \Omega)} \leq c_{\rm{int}} \Vert  \nabla v\Vert_{L^2(\Omega^-)},\\
				\intertext{and}
				\Vert \gamma^+_N w\Vert_{H^{-\frac{1}{2}}(\partial \Omega)} \leq c_{\rm{ext}} \Vert  \nabla w\Vert_{L^2(\Omega^+)}.
				\end{align*}
				
				{Additionally, the constant $c_{\rm int}$ depends only on the radii $\{r_j\}_{j=1}^N$ of the open balls while the constant $c_{\rm ext}$ depends on both the radii of the open balls as well as the minimum inter-sphere separation distance, i.e., $\min_{\substack{i, j \in \{1, \ldots, N\}\\ i \neq j}} \big(\vert \bold{x}_i -\bold{x}_j\vert - r_i -r_j\big)$.}
			\end{lemma}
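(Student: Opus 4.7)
My strategy is to exploit the fact that, by the very definitions in Section 2.1, the norm $\Vert \cdot \Vert_{H^{1/2}(\partial \Omega)}$ is a sum of purely local norms on each sphere $\partial \Omega_i$ with no cross-sphere contribution. This makes $H^{1/2}(\partial \Omega)$ isometric to the $\ell^2$-direct sum $\bigoplus_{i=1}^N H^{1/2}(\partial \Omega_i)$, and dualising gives the analogous identity $\Vert \sigma \Vert^2_{H^{-1/2}(\partial \Omega)}=\sum_{i=1}^N \Vert \sigma|_{\partial \Omega_i}\Vert^2_{H^{-1/2}(\partial \Omega_i)}$. Both bounds will therefore be reduced to per-sphere estimates whose constants are controlled by Assumptions A1--A2 uniformly in $N$.

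For the interior inequality, I would fix $i \in \{1,\dots,N\}$ and a test function $\phi \in H^{1/2}(\partial \Omega_i)$, and let $E_i\phi \in H^1(\Omega_i)$ denote the harmonic extension of $\phi$ into the single ball $\Omega_i$. Since $v|_{\Omega_i}$ is harmonic and $\Omega_i$ is Lipschitz, Green's identity yields $\langle \gamma^-_N v,\phi\rangle_{\partial \Omega_i}=\int_{\Omega_i}\nabla v \cdot \nabla E_i\phi$, and Cauchy--Schwarz gives $|\langle \gamma^-_N v,\phi\rangle_{\partial \Omega_i}|\leq \Vert \nabla v\Vert_{L^2(\Omega_i)}\Vert \nabla E_i \phi\Vert_{L^2(\Omega_i)}$. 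The $H^{1/2}$ trace/extension theorem on a single ball, combined with scaling, provides a bound $\Vert \nabla E_i \phi\Vert_{L^2(\Omega_i)}\leq C(r_i)\Vert \phi\Vert_{H^{1/2}(\partial \Omega_i)}$; invoking A1 so that $r_i$ lies in a bounded range makes $\sup_i C(r_i)$ a constant depending only on the radii. Dualising and summing squares via the direct-sum structure above delivers the desired estimate with $c_{\rm int}:=\sup_i C(r_i)$.

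The exterior inequality is more delicate because $\Omega^+$ is connected, so there is no disjoint-ball decomposition available on the volume side. My plan is to dualise first and localise second. Green's identity in $\Omega^+$, together with the decay built into the weighted norm on $H^1(\Omega^+)$, gives $\langle \gamma^+_N w,\phi\rangle=-\int_{\Omega^+}\nabla w \cdot \nabla E\phi$ for any admissible extension $E\phi \in H^1(\Omega^+)$ of $\phi\in H^{1/2}(\partial \Omega)$, whence $\Vert \gamma^+_N w\Vert_{H^{-1/2}(\partial \Omega)}\leq \Vert \nabla w\Vert_{L^2(\Omega^+)} \sup_{\phi\ne 0}\Vert \nabla E\phi\Vert_{L^2(\Omega^+)}/\Vert \phi\Vert_{H^{1/2}(\partial \Omega)}$. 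It therefore suffices to build a continuous extension operator $E \colon H^{1/2}(\partial \Omega)\to H^1(\Omega^+)$ whose gradient bound is $N$-independent. Set $\delta:=\epsilon^{\infty}/2$; by A2 the shells $U_i:=\{x\in \Omega^+ \colon \operatorname{dist}(x,\partial \Omega_i)<\delta\}$ are pairwise disjoint. On each $U_i$ I would take the harmonic extension of $\phi|_{\partial \Omega_i}$ from the inner sphere $\partial \Omega_i$ to the outer sphere $\{x \colon |x-\mathbf{x}_i|=r_i+\delta\}$ (imposing zero Dirichlet data on the outer sphere), and then extend by zero outside $U_i$. A single-shell estimate depending only on $r_i$ and $\delta$ bounds $\Vert \nabla E\phi\Vert^2_{L^2(U_i)}$ by $C(r_i,\delta)\Vert \phi\Vert^2_{H^{1/2}(\partial \Omega_i)}$; summing over $i$, using disjointness of the $U_i$ and the direct-sum structure of $\Vert \phi\Vert_{H^{1/2}(\partial \Omega)}$, gives the required $N$-independent bound, and thus the exterior inequality with $c_{\rm ext}$ depending only on the radii and on $\epsilon^{\infty}$.

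The main obstacle is the construction of this exterior extension and the verification that its gradient norm is controlled $N$-independently: a naive harmonic extension of $\phi$ into all of $\Omega^+$ would couple every sphere to every other sphere through the connected exterior domain, and any decay estimate for its gradient would a priori depend on the global geometry. The disjoint-shell construction forces locality by hand, and the minimum separation $\epsilon^{\infty}$ in A2 is precisely what guarantees a uniform lower bound on the thickness $\delta$ of each shell, which in turn keeps $C(r_i,\delta)$ uniform. This is also the structural reason $c_{\rm ext}$ inherits a dependence on the separation distance while $c_{\rm int}$, which never leaves the disjoint interior balls, depends only on the radii.
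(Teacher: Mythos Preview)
Your proposal is correct and follows the same overall strategy as the paper: dualise via Green's identity against an extension operator, use the per-ball harmonic extension for the interior bound, and for the exterior bound build a localised extension supported in pairwise disjoint shells whose thickness is controlled by the minimum separation from Assumption~\textbf{A2}. The one technical difference lies in how the shell extension is realised. You solve the Dirichlet problem in each annulus $U_i$ with data $\phi|_{\partial\Omega_i}$ on the inner sphere and zero on the outer sphere, then extend by zero; the paper instead takes the \emph{full} exterior harmonic extension $\mathcal{E}^{\rm ext}_{i,\mathcal{H}}\phi_i$ given explicitly by the series $\sum_{\ell,m}[\phi_i]^m_\ell (r_i/|\mathbf{x}-\mathbf{x}_i|)^{\ell+1}\mathcal{Y}^m_\ell$ and multiplies it by an explicit $C^1$ radial cutoff $\phi_{r_i,\epsilon}$. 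Your construction is slightly cleaner conceptually (the annular harmonic extension is energy-minimising among all extensions with the prescribed traces, so the required $H^1$ bound is immediate from well-posedness of the Dirichlet problem on a fixed annulus), whereas the paper's cutoff construction, though it requires a few more explicit computations, yields a closed-form constant $C_{r_i,\epsilon}$ from which the behaviour $\Vert\mathcal{E}_{\rm external}\Vert^2_{\rm OP}=\mathcal{O}(1/\epsilon)$ as $\epsilon\to 0$ can be read off directly---information that is used downstream in Remark~\ref{rem:Hassan} to quantify how $c_{\mathcal{V}}$ and $C_{\tilde{\mathcal{A}}}$ degrade with the separation distance.
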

			\begin{proof}
				The first bound is straightforward to prove. Indeed, let $\mathcal{E}_{\mathcal{H}}^{\rm int} \colon H^{\frac{1}{2}}(\partial \Omega) \rightarrow H^1(\Omega^-)$ be defined as the interior harmonic extension operator on $\Omega^-$. A direct calculation yields
				\begin{align*}
				\Vert \gamma^-_N v\Vert_{H^{-\frac{1}{2}}(\partial \Omega)}= \sup_{0\neq \lambda \in H^{\frac{1}{2}}(\partial \Omega)} \dfrac{\langle \gamma^-_N v, \lambda \rangle_{\partial \Omega} }{\Vert \lambda\Vert_{H^{\frac{1}{2}}(\partial \Omega) }}&= \sup_{0\neq \lambda \in H^{\frac{1}{2}}(\partial \Omega)} \dfrac{ \int_{\Omega^-} \nabla v(x) \cdot \nabla \mathcal{E}^{\rm int}_{\mathcal{H}}\lambda(x)\, dx}{\Vert \lambda\Vert_{H^{\frac{1}{2}}(\partial \Omega) }}\\[0.5em]
				&\leq \Vert  \nabla v\Vert_{L^2(\Omega^-)} \sup_{0\neq \lambda \in H^{\frac{1}{2}}(\partial \Omega)} \dfrac{\Vert  \nabla \mathcal{E}^{\rm int}_{\mathcal{H}}\lambda\Vert_{L^2(\Omega^-)}}{\Vert \lambda\Vert_{H^{\frac{1}{2}}(\partial \Omega) }}\\[0.5em]
				&\leq\Vert  \nabla v\Vert_{L^2(\Omega^-)} \sup_{0 \neq \lambda \in H^{\frac{1}{2}}(\partial \Omega)} \dfrac{\Vert \mathcal{E}^{\rm int}_{\mathcal{H}}\lambda\Vert_{H^1(\Omega^-)}}{\Vert \lambda\Vert_{H^{\frac{1}{2}}(\partial \Omega) }}\\[0.5em]
				&\leq c_{\rm equiv}\Vert \mathcal{E}^{\rm int}_{\mathcal{H}}\Vert_{\text{OP}}\Vert \nabla v\Vert_{L^2(\Omega^-)},
				\end{align*}
				where the $N$-independent norm equivalence constant $c_{\rm equiv}$ arises due to the fact that by our convention,  $H^{\frac{1}{2}}(\partial \Omega)$ is equipped with the new $||| \cdot |||$ given by Definition \ref{def:NewNorm} rather than the Sobolev-Slobodeckij norm $\Vert \cdot \Vert_{H^{\frac{1}{2}}(\partial \Omega)}$. Since $\Omega^-$ is simply the union of non-intersecting open balls, i.e., $\Omega^- = \cup_{j=1}^N \Omega_j$, it is easy to see that the operator norm $\Vert \mathcal{E}^{\rm int}_{\mathcal{H}}\Vert_{\text{OP}}$ depends only on the radii $\{r_j\}_{j=1}^N$ of the open balls $\{\Omega_j\}_{j=1}^N$ and is independent of the number $N$ of open balls. This completes the proof for the first bound. \vspace{3mm}
				
				In order to compute the second bound, we require more work. The essential idea is to mimic the proof for the first bound but this requires us to first define an extension operator $\mathcal{E}_{\rm external} \colon H^{\frac{1}{2}}(\partial \Omega) \rightarrow H^1(\Omega^+)$ whose operator norm is also independent of $N$. We proceed in four steps. \vspace{3mm}
				
				\begin{enumerate}
					\item[Step 1)] We first define a family of one-dimensional continuously differentiable cutoff functions. To this end, let $r> 0$ and $\epsilon>0$ be real numbers. We define the cubic polynomial $p_{r, \epsilon} \colon \mathbb{R} \rightarrow \mathbb{R}$ as
					\begin{align*}
					\forall x \in \mathbb{R} \colon \quad p_{r, \epsilon}(x)= \frac{1}{\epsilon^3}\Big(2x^3 - 3(2r+\epsilon)x^2 + 6r(r+\epsilon)x-(r + \epsilon)^2 (2r-\epsilon) \Big).
					\end{align*}
					
					Then for any $r>0$ and $\epsilon > 0$ we define the cutoff function $\phi_{r, \epsilon} \colon \mathbb{R} \rightarrow [0, 1]$ as the mapping with the mapping with the property that for all $x \in \mathbb{R}$ it holds that
					\begin{align*}
					\phi_{r, \epsilon}(x) := \begin{cases}
					1 &\quad \text{if } x \leq r,\\
					p_{r, \epsilon}(x) &\quad  \text{if } x \in (r, r+\epsilon),\\
					0 &\quad \text{if } x \geq r+\epsilon.
					\end{cases}
					\end{align*}
					
					Let $r > 0$ and $\epsilon > 0$ be fixed. It can readily be verified that the cutoff function $\phi_{r, \epsilon} \in C^1(\mathbb{R})$, $\Vert \phi_{r, \epsilon}\Vert_{L^{\infty}(\mathbb{R})}=1$, and furthermore that $\Vert \phi'_{r, \epsilon}\Vert_{L^{\infty}(\mathbb{R})}=\frac{3}{2\epsilon}$. \\
					
					\item[Step 2)] Let $i \in \{1, \ldots, N\}$. We define the (exterior) harmonic extension operator $\mathcal{E}_{i, \mathcal{H}}^{\rm ext} \colon H^{\frac{1}{2}}(\partial \Omega_i) \rightarrow H^1(\mathbb{R}^3 \setminus \Omega_i)$ as follows: Given any $\lambda_i \in H^{\frac{1}{2}}(\partial \Omega_i)$, there exist coefficients $[\lambda_i]_{\ell}^m, ~ \ell \in \mathbb{N}_0, ~-\ell \leq m \leq \ell$ such that for all $\bold{x} \in \partial \Omega_i$ it holds that
					\begin{align*}
					\lambda_i(\bold{x}) = \sum_{{\ell}=0}\sum_{m=-\ell}^{m=\ell}[\lambda_i]_{\ell}^m \mathcal{Y}_{\ell}^m\left(\frac{\bold{x}-\bold{x}_i}{\vert \bold{x}-\bold{x}_i\vert}\right).
					\end{align*}
					
					We therefore define 
					\begin{align}\label{eq:Hassan1}
					\big(\mathcal{E}_{i, \mathcal{H}}^{\rm ext}\lambda_i\big)(\bold{x}) := \sum_{{\ell}=0}^{\infty}\sum_{m=-\ell}^{m=\ell}[\lambda_i]_{\ell}^m \left(\frac{r_i}{\vert \bold{x} -\bold{x}_i \vert}\right)^{\ell+1}\mathcal{Y}_{\ell}^m\left(\frac{\bold{x}-\bold{x}_i}{\vert \bold{x}-\bold{x}_i\vert}\right),
					\end{align}
					for all $\bold{x} \in \mathbb{R}^3$ such that $\vert \bold{x} -\bold{x}_i \vert \geq r_i$. The boundedness of this operator can be deduced from the well-posedness and regularity results on the exterior Dirichlet problem for the Laplace equation. \\
					
					\item[Step 3)] We now recall that we have by assumption that the minimum separation distance of the open balls $\{\Omega_i\}_{i=1}^N$ is uniformly bounded below with respect to $N$. Let $\widetilde{\epsilon}> 0$ be a lower bound for this separation distance and define $\epsilon:= \frac{\widetilde{\epsilon}}{4}$. Moreover, let once again $i \in \{1, \ldots, N\}$. We now define the \emph{local} extension operator $\mathcal{E}^i_{\rm external} \colon H^{\frac{1}{2}}(\partial \Omega_i) \rightarrow H^1(\Omega^+)$ as the mapping with the property that for all $\lambda_i \in H^{\frac{1}{2}}(\partial \Omega_i)$ and all $\bold{x} \in \Omega^+$ it holds that
					\begin{align*}
					\big(\mathcal{E}^i_{\rm external}\lambda_i\big)(\bold{x}):= \big(\mathcal{E}_{i, \mathcal{H}}^{\rm ext}\lambda_i\big)(\bold{x}) \phi_{r_i, \epsilon}\big(\vert \bold{x} -\bold{x}_i\vert\big).
					\end{align*}
					
					Intuitively, this local extension operator $\mathcal{E}^i_{\rm external}$ takes as input Dirichlet data on $\partial \Omega_i$, constructs the exterior harmonic extension according to Equation \eqref{eq:Hassan1}, and then multiplies this extension with a smooth cut-off function. The following properties of this local extension operator can easily be deduced: \\ \\
					
					\begin{itemize}
						
						\item[Property 1:] For all $\bold{x} \in \Omega^+$ it holds that $\big(\mathcal{E}^i_{\rm external}\lambda_i\big)(\bold{x}) \leq \big(\mathcal{E}_{i, \mathcal{H}}^{\rm ext}\lambda_i\big)(\bold{x})$. 
						
						\item[Property 2:] For all $\bold{x} \in \Omega^+$ such that $\vert \bold{x} -\bold{x}_i \vert \geq r_i+ \epsilon$, it holds that $\big(\mathcal{E}^i_{\rm external}\lambda_i\big)(\bold{x}) =0$. In other words, the local extension operator $\mathcal{E}^i_{\rm external}$ is zero outside a ball of radius $ r_i+ \epsilon$ centred at $\bold{x}_i$, i.e., the centre of the open ball $\Omega_i$. This implies in particular that the local extension operator $\mathcal{E}^i_{\rm external}$ is zero on all closed balls $\overline{\Omega_j}, j \in \{1, \ldots, N\}$ such that $j\neq i$.
						
						\item[Property 3:] For all $\bold{x} \in \Omega^+$ such that $\vert \bold{x} -\bold{x}_i \vert < r_i+ \epsilon$, the gradient $ \nabla_{\bold{x}}\big(\mathcal{E}^i_{\rm external}\lambda_i\big)(\bold{x})$ in cartesian coordinates satisfies:
						\begin{align*}
						\vert \nabla_{\bold{x}}\big(\mathcal{E}^i_{\rm external}\lambda_i\big)(\bold{x})\vert &= \Big \vert \phi_{r_i, \epsilon}\big(\vert \bold{x} -\bold{x}_i\vert\big)\nabla_{\bold{x}} \big(\mathcal{E}_{i, \mathcal{H}}^{\rm ext}\lambda_i\big)(\bold{x}) +  \big(\mathcal{E}_{i, \mathcal{H}}^{\rm ext}\lambda_i\big)(\bold{x}) \nabla_{\bold{x}} \phi_{r_i, \epsilon}\big(\vert \bold{x} -\bold{x}_i\vert\big)\Big \vert\\
						&\leq \Big \vert \nabla_{\bold{x}} \big(\mathcal{E}_{i, \mathcal{H}}^{\rm ext}\lambda_i\big)(\bold{x})\Big\vert  +  \Big\vert \big(\mathcal{E}_{i, \mathcal{H}}^{\rm ext}\lambda_i\big)(\bold{x})\phi_{r_i, \epsilon}'\big(\vert \bold{x} -\bold{x}_i\vert\big)\Big \vert\\
						&= \Big \vert \nabla_{\bold{x}} \big(\mathcal{E}_{i, \mathcal{H}}^{\rm ext}\lambda_i\big)(\bold{x})\Big\vert  + \frac{3}{2\epsilon} \Big\vert \big(\mathcal{E}_{i, \mathcal{H}}^{\rm ext}\lambda_i\big)(\bold{x})\Big \vert.
						\end{align*}
					\end{itemize}
					Of course, we have not yet shown that the mapping $\mathcal{E}^i_{\rm external} \colon H^{\frac{1}{2}}(\partial \Omega_i) \rightarrow H^1(\Omega^+)$ is bounded as claimed. In order to show this, let us denote by $B_{r_i + \epsilon}(\bold{x}_i)$ the open ball of radius $r_i + \epsilon$ with centre at~$\bold{x}_i$. Then combining properties 1 and 3 yields that
					\begin{align*}
					\Vert \mathcal{E}^i_{\rm external}\lambda_i\Vert^2_{H^1(\Omega^+)}  &= \int_{\Omega^+} \frac{\vert\mathcal{E}^i_{\rm external}\lambda_i(\bold{x})\vert^2}{1 + \vert \bold{x}\vert^2}\, d\bold{x} + \int_{\Omega^+} \vert \nabla_{\bold{x}}\big(\mathcal{E}^i_{\rm external}\lambda_i\big)(\bold{x})\vert^2\, d \bold{x}\\
					&\leq \left(1+\frac{9}{2\epsilon^2}\right)\int_{\Omega^+ \cap B_{r_i + \epsilon}(\bold{x}_i)} \vert\mathcal{E}_{i, \mathcal{H}}^{\rm ext}\lambda_i(\bold{x})\vert^2\, d\bold{x}+2\int_{\Omega^+ \cap B_{r_i + \epsilon}(\bold{x}_i)} \left\vert \nabla_{\bold{x}}\big(\mathcal{E}_{i, \mathcal{H}}^{\rm ext}\lambda_i\big)(\bold{x})\right\vert^2\, d \bold{x}\\[0.5em]
					&\leq \max\left\{2, 1+\frac{9}{2\epsilon^2}\right\} \Vert \mathcal{E}_{i, \mathcal{H}}^{\rm ext}\lambda_i\Vert^2_{H^1\big(\Omega^+ \cap B_{r_i + \epsilon}(\bold{x}_i)\big)}.
					\end{align*}
					
					In order to simplify the final expression we first use Equation \eqref{eq:Hassan1} to simplify the $L^2\big(\Omega^+ \cap B_{r_i + \epsilon}(\bold{x}_i)\big)$ norm. For reasons that will subsequently become clear, we adopt the convention that the space $H^{\frac{1}{2}}(\partial \Omega_i)$ is equipped with the norm $||| \cdot |||_{H^{\frac{1}{2}}(\partial \Omega_i)}$ defined through Definition \ref{def:7.1}. A direct calculation yields
					\begin{align*}
					\Vert \mathcal{E}_{i, \mathcal{H}}^{\rm ext}\lambda_i\Vert^2_{L^2\big(\Omega^+ \cap B_{r_i + \epsilon}(\bold{x}_i)\big)} &\leq \frac{1}{3} \left((r_i+\epsilon)^3 - r_i^3\right)\sum_{{\ell}=0}^{\infty}\sum_{m=-\ell}^{\ell}\left([\lambda_i]_{\ell}^m\right)^2\\
					&= \left(\epsilon r_i^2 +  \epsilon^2 r_i + \frac{\epsilon^3}{3}\right)\sum_{{\ell}=0}^{\infty}\sum_{m=-\ell}^{\ell}\left([\lambda_i]_{\ell}^m\right)^2\\
					&\leq \epsilon\max\left\{\frac{1}{r_i}, \frac{1}{r_i^2}\right\} \left(r_i^2 + \epsilon r_i + \frac{\epsilon^2}{3}\right) ||| \lambda_i |||^2_{H^{\frac{1}{2}}(\partial \Omega_i)}.
					\end{align*}
					
					Next, we use the fact that the local extension $\mathcal{E}_{i, \mathcal{H}}^{\rm ext}\lambda_i$ is a harmonic function so that Green's identity applies in the domain $\Omega^+ \cap B_{r_i + \epsilon}(\bold{x}_i)$. Simple calculus then yields that
					
					\begin{align*}
					\Vert \nabla \mathcal{E}_{i, \mathcal{H}}^{\rm ext}\lambda_i\Vert^2_{L^2\big(\Omega^+ \cap B_{r_i + \epsilon}(\bold{x}_i)\big)}&= r^2_i \sum_{{\ell}=0}^{\infty}\sum_{m=-\ell}^{\ell}\frac{\ell+1}{r_i}\left([\lambda_i]_{\ell}^m\right)^2 - (r_i+\epsilon)^2 \sum_{{\ell}=0}^{\infty}\sum_{m=-\ell}^{\ell}\frac{\ell+1}{r_i+\epsilon}\left([\lambda_i]_{\ell}^m\right)^2\left(\frac{r_i}{r_i+\epsilon}\right)^{2\ell+2}\\
					&= r^2_i \sum_{{\ell}=0}^{\infty}\sum_{m=-\ell}^{\ell}(\ell+1)\left([\lambda_i]_{\ell}^m\right)^2\left(\frac{1}{r_i}- \frac{1}{r_i+\epsilon}\left(\frac{r_i}{r_i+\epsilon}\right)^{2\ell}\right)\\
					&= r^2_i \sum_{{\ell}=0}^{\infty}\sum_{m=-\ell}^{\ell}(\ell+1)\left([\lambda_i]_{\ell}^m\right)^2\frac{(r_i+\epsilon)^{2\ell+1}-r_i^{2\ell+1}}{r_i (r_i+\epsilon)^{2\ell+1}}.
					\end{align*}
					
					This last expression can be further simplified by observing that for all $\ell \geq 0$ it holds that
					\begin{align*}
					\frac{(r_i+\epsilon)^{2\ell+1}-r_i^{2\ell+1}}{r_i (r_i+\epsilon)^{2\ell+1}} = \frac{(1+\frac{\epsilon}{r_i})^{2\ell+1}-1}{r_i (1+ \frac{\epsilon}{r_i})^{2\ell+1}}=\frac{\frac{\epsilon}{r_i}(1+\frac{\epsilon}{r_i})^{2\ell} +(1+\frac{\epsilon}{r_i})^{2\ell} -1}{r_i (1+ \frac{\epsilon}{r_i})^{2\ell+1}}\leq \frac{\epsilon}{r_i^2} \frac{(1+\frac{\epsilon}{r_i})^{2\ell}}{(1+\frac{\epsilon}{r_i})^{2\ell+1}}\leq \frac{\epsilon}{r_i^2}.
					\end{align*}
					
					We conclude that
					\begin{align*}
					\Vert \nabla \mathcal{E}_{i, \mathcal{H}}^{\rm ext}\lambda_i\Vert^2_{L^2\big(\Omega^+ \cap B_{r_i + \epsilon}(\bold{x}_i)\big)}&\leq \epsilon \sum_{{\ell}=0}^{\infty}\sum_{m=-\ell}^{\ell}(\ell+1)\left([\lambda_i]_{\ell}^m\right)^2 \leq 2\epsilon\max\left\{\frac{1}{r_i}, \frac{1}{r_i^2}\right\} ||| \lambda_i |||^2_{H^{\frac{1}{2}}(\partial \Omega_i)}.
					\end{align*}
					
					Consequently, we can define a constant $C_{r_i, \epsilon}>0$ depending only on $\epsilon$ and $r_i$ as
					\begin{align}\label{eq:constant}
					C_{r_i, \epsilon}:= \epsilon \max \left\{\frac{1}{r_i}, \frac{1}{r_i^2}\right\} \max\{r_i^2+\epsilon r_i+ \frac{\epsilon^2}{3}, 2\},
					\end{align}
					
					and we obtain that
					\begin{align}\nonumber
					\Vert \mathcal{E}^i_{\rm external}\lambda_i\Vert^2_{H^1(\Omega^+)}  &\leq \max\left\{2, 1+\frac{9}{2\epsilon^2}\right\} \Vert \mathcal{E}_{i, \mathcal{H}}^{\rm ext}\lambda_i\Vert^2_{H^1\big(\Omega^+ \cap B_{r_i + \epsilon}(\bold{x}_i)\big)}\\ \label{eq:Hassan2}
					&\leq  \max\left\{2, 1+\frac{9}{2\epsilon^2}\right\} C_{r_i, \epsilon} ||| \lambda_i|||^2_{H^{\frac{1}{2}}(\partial \Omega_{i})}.
					\end{align}
					%
					%
					It follows that the local extension operator $\mathcal{E}^i_{\rm external} \colon H^{\frac{1}{2}}(\partial \Omega_i) \rightarrow H^1(\Omega^+)$ is indeed bounded. \\
					
					\item[Step 4)]	We are now ready to define the extension operator $\mathcal{E}_{\rm external}\colon H^{\frac{1}{2}}(\partial\Omega)  \rightarrow H^1(\Omega^+)$. Indeed, given $\lambda \in H^{\frac{1}{2}}(\partial \Omega)$ and denoting $\lambda_i:= \lambda \vert_{\partial \Omega_i}$ for each $i\in\{1, \ldots, N\}$, we define:
					\begin{align*}
					\mathcal{E}_{\rm external}(\lambda):= \sum_{i=1}^N \mathcal{E}^i_{\rm external}\lambda_i.
					\end{align*}
					
					Property 2 of the local extension operators $\mathcal{E}^i_{\rm external}, ~i=1, \ldots, N$ now yields that $\gamma^+ \big(\mathcal{E}_{\rm external}(\lambda)\big)=~\lambda$. Moreover, from the bound \eqref{eq:Hassan2} we see that
					\begin{align*}
					\Vert \mathcal{E}_{\rm external}(\lambda)\Vert^2_{H^1(\Omega^+)}  &\leq \max\left\{2, 1+\frac{9}{\epsilon^2}\right\}\max_{i=1,\ldots, N}C_{r_i,\epsilon}\sum_{i=1}^N ||| \lambda_i |||^2_{H^{\frac{1}{2}}(\partial \Omega_i)}\\
					&=\max\left\{2, 1+\frac{9}{\epsilon^2}\right\}\max_{i=1,\ldots, N}C_{r_i,\epsilon}||| \lambda |||^2.
					\end{align*}
					
					Thus, the mapping $\mathcal{E}_{\rm external}\colon H^{\frac{1}{2}}(\partial\Omega)  \rightarrow H^1(\Omega^+)$ is indeed a bounded extension operator with operator norm 
					\[\Vert \mathcal{E}_{\rm external}\Vert^2_{\rm OP}:=\max\left\{2, 1+\frac{9}{\epsilon^2}\right\}\max_{i=1,\ldots, N}C_{r_i,\epsilon},\]
					
					Notice that the operator norm is independent of the number $N$ of open balls and depends only the radii of the open balls $\{\Omega_{i}\}_{i=1}^N$ and the minimal inter-sphere separation distance $\epsilon$. Furthermore, it follows from Equation \eqref{eq:constant} that $\max_{i=1,\ldots, N} C_{r_i, \epsilon } = \mathcal{O}(\epsilon)$ as $\epsilon \to 0$. Consequently, we obtain that $\Vert \mathcal{E}_{\rm external}\Vert^2_{\rm OP} = \mathcal{O}\big(\frac{1}{\epsilon}\big)$ as $\epsilon \to 0$.\\
				\end{enumerate}
				
				Using the extension operator $\mathcal{E}_{\rm external}$ we have just defined, we can mimic the calculations performed in the beginning of this proof in order to obtain the second, required bound:
				\begin{align*}
				\Vert \gamma_N^+w\Vert_{H^{-\frac{1}{2}}(\partial \Omega)}\leq c_{\rm equiv}\Vert \mathcal{E}_{\rm external}\Vert_{\text{OP}}\Vert  \nabla w\Vert_{L^2(\Omega^+)}.
				\end{align*}
				Here, the $N$-independent norm equivalence constant $c_{\rm equiv}$ arises once again due to the fact that the canonical dual norm $\Vert \cdot \Vert_{H^{-\frac{1}{2}}(\partial \Omega)}$ is defined with respect to the Sobolev-Slobodeckij norm $\Vert \cdot \Vert_{H^{\frac{1}{2}}(\partial \Omega)}$ rather than the new $||| \cdot |||$ given by Definition \ref{def:NewNorm}. Defining $c_{\rm int}:= c_{\rm equiv}\Vert \mathcal{E}_{\mathcal{H}}^{\rm int}\Vert_{\rm OP}$ and $c_{\rm ext}:= c_{\rm equiv}\Vert \mathcal{E}_{\rm external} \Vert_{\rm OP}$ thus completes the proof.
			\end{proof}
			
			We can now deduce a lower bound for the coercivity constant $c_{\mathcal{V}}$ of the single layer boundary operator. 
			
			\begin{lemma}\label{lem:Hassan2}
				Let the constants $c_{\rm int}>0$ and $c_{\rm ext}>0$ be defined as in Lemma \ref{lem:Single} and let $c_{\mathcal{V}}>0$ denote the coercivity constant of the single layer boundary operator $\mathcal{V}\colon H^{\frac{1}{2}}(\partial \Omega) \rightarrow H^{\frac{1}{2}}(\partial \Omega)$. Then it holds that
				\begin{align*}
				c_{\mathcal{V}} \geq \frac{1}{2} \min \left\{\frac{1}{c^2_{\rm int}}, \frac{1}{c^2_{\rm ext}}\right\}.
				\end{align*}
			\end{lemma}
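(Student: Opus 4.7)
The plan is to prove the lower bound via the classical variational characterisation of $\langle \sigma, \mathcal{V}\sigma\rangle_{\partial \Omega}$ as a sum of Dirichlet energies of the single layer potential, and then control $\|\sigma\|_{H^{-\frac{1}{2}}(\partial\Omega)}$ using the jump relation together with the two estimates provided by Lemma \ref{lem:Single}. Concretely, given $\sigma \in H^{-\frac{1}{2}}(\partial \Omega)$, I would set $u := \mathcal{S}\sigma$ and exploit the fact that $u$ is harmonic on $\Omega^-\cup \Omega^+$, decays at infinity, and satisfies the standard Dirichlet continuity $\gamma^- u = \gamma^+ u = \mathcal{V}\sigma$ together with the Neumann jump $\gamma_N^- u - \gamma_N^+ u = \sigma$.

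Applying Green's first identity in $\Omega^-$ and in $\Omega^+$ (the latter justified by the decay of $u$ at infinity and the weighted definition of $H^1(\Omega^+)$ from Section \ref{sec:2a}), and using the Dirichlet continuity to combine the boundary terms, I would obtain
\begin{align*}
\langle \sigma, \mathcal{V}\sigma\rangle_{\partial \Omega}
&= \langle \gamma_N^- u - \gamma_N^+ u,\ \gamma^- u\rangle_{\partial \Omega}
= \|\nabla u\|_{L^2(\Omega^-)}^2 + \|\nabla u\|_{L^2(\Omega^+)}^2.
\end{align*}
This is the coercivity of $\mathcal{V}$ expressed in terms of the Dirichlet energies of the single layer potential, and it is the right quantity to pair with Lemma \ref{lem:Single}.

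For the matching upper bound on $\|\sigma\|_{H^{-\frac{1}{2}}(\partial \Omega)}$, I would use the Neumann jump relation and the triangle inequality to write $\|\sigma\|_{H^{-\frac{1}{2}}(\partial \Omega)} \leq \|\gamma_N^- u\|_{H^{-\frac{1}{2}}(\partial \Omega)} + \|\gamma_N^+ u\|_{H^{-\frac{1}{2}}(\partial \Omega)}$ and then directly invoke Lemma \ref{lem:Single}, which gives $\|\gamma_N^- u\|_{H^{-\frac{1}{2}}(\partial \Omega)} \leq c_{\rm int}\|\nabla u\|_{L^2(\Omega^-)}$ and $\|\gamma_N^+ u\|_{H^{-\frac{1}{2}}(\partial \Omega)} \leq c_{\rm ext}\|\nabla u\|_{L^2(\Omega^+)}$. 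Squaring and applying the elementary inequality $(a+b)^2 \leq 2(a^2+b^2)$ yields
\begin{align*}
\|\sigma\|_{H^{-\frac{1}{2}}(\partial \Omega)}^2 \leq 2\max\{c_{\rm int}^2, c_{\rm ext}^2\}\Big(\|\nabla u\|_{L^2(\Omega^-)}^2 + \|\nabla u\|_{L^2(\Omega^+)}^2\Big).
\end{align*}
Combining the two displays gives $\langle \sigma, \mathcal{V}\sigma\rangle_{\partial \Omega} \geq \tfrac{1}{2}\min\{c_{\rm int}^{-2}, c_{\rm ext}^{-2}\}\|\sigma\|_{H^{-\frac{1}{2}}(\partial \Omega)}^2$, as claimed.

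The argument is essentially routine once Lemma \ref{lem:Single} is in hand, so I expect no real obstacle at this stage; the genuine work was pushed into the construction of the $N$-independent extension operator that furnishes $c_{\rm ext}$. A minor point to be careful about is the correct interpretation of Green's identity on $\Omega^+$ inside the weighted $H^1$-framework of Section \ref{sec:2a}, but this is standard for functions arising as single layer potentials of $H^{-\frac{1}{2}}$ densities. A second small point is that the statement concerns the coercivity of $\mathcal{V}$ in the canonical $H^{-\frac{1}{2}}(\partial\Omega)$ dual norm, so I would take care that the Neumann-trace bounds of Lemma \ref{lem:Single} are stated in exactly this norm (which they are), so no additional norm-equivalence constant needs to enter at this step.
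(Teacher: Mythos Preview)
Your proposal is correct and follows essentially the same route as the paper: write $\langle \sigma,\mathcal{V}\sigma\rangle_{\partial\Omega}$ as the sum of interior and exterior Dirichlet energies of $u=\mathcal{S}\sigma$, invoke Lemma~\ref{lem:Single} to control the interior and exterior Neumann traces by these energies, and recover $\sigma$ via the jump relation together with the elementary inequality $\|a-b\|^2\le 2(\|a\|^2+\|b\|^2)$. The only cosmetic difference is the order in which you apply Lemma~\ref{lem:Single} and the jump relation, which does not affect the argument or the constant.
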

			\begin{proof}
				Let $\sigma \in H^{-\frac{1}{2}}(\partial \Omega)$ and let $u = \mathcal{S}\sigma \in H^1\left(\Omega^- \cup \Omega^+\right)$. It follows from the jump properties of the single layer potential operator that
				\begin{align*}
				\langle \sigma, \mathcal{V}\sigma\rangle_{\partial \Omega}&= \int_{\Omega^-} \vert \nabla u(x) \vert^2\, dx + \int_{\Omega^{\text{+}}} \vert \nabla u(x) \vert^2\, dx.
				\end{align*}

				Lemma \ref{lem:Single} therefore yields that
				\begin{align*}
				\langle \sigma, \mathcal{V}\sigma\rangle_{\partial \Omega}&\geq \frac{1}{c_{\text{int}}^2}\Vert \gamma^-_N u\Vert^2_{H^{-\frac{1}{2}}(\partial \Omega)}+ \frac{1}{c_{\text{ext}}^2}\Vert \gamma^+_N u\Vert^2_{H^{-\frac{1}{2}}(\partial \Omega)}\\
				&\geq \min\Big\{\frac{1}{c^2_{\text{int}}}, \frac{1}{c_{\text{ext}}^2}\Big\} \left(\Vert \gamma^-_N u\Vert^2_{H^{-\frac{1}{2}}(\partial \Omega)}+ \Vert \gamma^+_N u\Vert^2_{H^{-\frac{1}{2}}(\partial \Omega)}\right)\\
				&\geq \min\Big\{\frac{1}{c^2_{\text{int}}}, \frac{1}{c_{\text{ext}}^2}\Big\} \left(\frac{1}{2} \Vert \gamma^-_Nu - \gamma^+_N u\Vert^2_{H^{-\frac{1}{2}}(\partial \Omega)}\right)\\
				&=\frac{1}{2}\min\Big\{\frac{1}{c^2_{\text{int}}}, \frac{1}{c_{\text{ext}}^2}\Big\} \Vert \sigma\Vert^2_{H^{-\frac{1}{2}}(\partial \Omega)}.
				\end{align*}
			\end{proof}
			
			\begin{remark}\label{rem:Hassan}
				Consider the settings of Lemma \ref{lem:Single} and Lemma \ref{lem:Hassan2}. {Two facts can be deduced from the proofs of these results. First, that the coercivity constant $c_{\mathcal{V}}$ of the single layer boundary operator depends only on the radii $\{r_j\}_{j=1}^N$ of the open balls $\{\Omega_j\}_{j=1}^N$ and the minimal inter-sphere separation distance. As a consequence, the continuity constant $C_{\widetilde{\mathcal{A}}}$ of the reduced bilinear form $\tilde{a}$ (see Lemma~\ref{lem:contin2}) depends only on the radii of the open balls, the minimal inter-sphere separation distance, and the dielectric constants $\{\kappa_j\}_{j=1}^N$.} {Second, we have also obtained significant insight into the behaviour of the coercivity constant $c_{\mathcal{V}}$ for small minimal inter-sphere separation distance. Indeed, let $\epsilon := \min_{\substack{i, j \in \{1, \ldots, N\}\\ i \neq j}} \big(\vert \bold{x}_i - \bold{x}_j \vert -r_i-r_j\big)$. Then $c_{\mathcal{V}}=\mathcal{O}(\epsilon)$ for $\epsilon\to 0$. This result implies that the continuity constant $C_{\widetilde{\mathcal{A}}}$ grows with rate at most $\mathcal{O}(\frac{1}{\sqrt{\epsilon } })$ as $\epsilon \to 0$.}
			\end{remark}	
		}

		Now that we have analysed the continuity constant $C_{\widetilde{\mathcal{A}}}$ of the reduced bilinear form $\tilde{a} \colon \breve{H}^{\frac{1}{2}}(\partial \Omega) \times \breve{H}^{-\frac{1}{2}}(\partial \Omega)\rightarrow~\mathbb{R}$ in detail, the next step in our analysis is to prove that this bilinear form satisfies the inf-sup condition.
		
		\begin{lemma}\label{lem:inf-sup1}
			Let the bilinear form $\tilde{a} \colon \breve{H}^{\frac{1}{2}}(\partial \Omega) \times \breve{H}^{-\frac{1}{2}}(\partial \Omega)\rightarrow \mathbb{R}$ be defined as in Definition \ref{def:Atilde}. Then there exists a constant $\beta_{\tilde{\mathcal{A}}}> 0$ that depends only on the function $\kappa$ and the dielectric constant $\kappa_0>0$ of the external medium such that 
			
			\begin{enumerate}
				\item[(i)] It holds that
				\begin{align*}
				\inf_{0 \neq\tilde{\lambda} \in \breve{H}^{\frac{1}{2}}(\partial \Omega)} \sup_{0\neq \tilde{\sigma} \in \breve{H}^{-\frac{1}{2}}(\partial \Omega)} \frac{\vert\tilde{a}(\tilde{\lambda}, \tilde{\sigma})\vert}{ ||| \tilde{\lambda}||| \, ||| \tilde{\sigma}|||^* } \geq \beta_{\tilde{\mathcal{A}}} > 0; \qquad \text{\emph{(Bounded Below)}}
				\end{align*}
				
				\item[(ii)] For all $0\neq \tilde{\sigma} \in \breve{H}^{-\frac{1}{2}}(\partial \Omega)$ it holds that
				\begin{align*}
				\hspace{20mm}\sup_{0\neq \tilde{\lambda} \in \breve{H}^{\frac{1}{2}}(\partial \Omega)} \vert \tilde{a}(\tilde{\lambda}, \tilde{\sigma})\vert > 0. \hspace{10mm}\qquad \text{ \emph{(Dense Range)}}
				\end{align*}
			\end{enumerate}
		\end{lemma}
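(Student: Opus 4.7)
The plan is to rewrite the bilinear form as $\tilde{a}(\tilde{\lambda}, \tilde{\sigma}) = \langle \tilde{\sigma}, \tilde{\mathcal{A}}\tilde{\lambda}\rangle_{\partial\Omega}$, where $\tilde{\mathcal{A}} := \mathbb{P}_{0}^{\perp}\mathcal{A}\mathbb{P}_{0}^{\perp}$ is the reduced operator appearing in the proof of Lemma~\ref{lem:contin2}, and thereby reduce the inf-sup in part~(i) to a lower bound on $|||\tilde{\mathcal{A}}\tilde{\lambda}|||$. Remark~\ref{rem:isometry} shows that $\text{DtN}\colon \breve{H}^{\frac{1}{2}}(\partial\Omega)\to\breve{H}^{-\frac{1}{2}}(\partial\Omega)$ is an isometry between the $|||\cdot|||$ and $|||\cdot|||^{*}$ norms, so the choice $\tilde{\sigma}:=\text{DtN}(\tilde{\mathcal{A}}\tilde{\lambda})$ yields $|||\tilde{\sigma}|||^{*}=|||\tilde{\mathcal{A}}\tilde{\lambda}|||$ and $\tilde{a}(\tilde{\lambda},\tilde{\sigma})=|||\tilde{\mathcal{A}}\tilde{\lambda}|||^{2}$. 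Proving~(i) thus amounts to establishing $|||\tilde{\mathcal{A}}\tilde{\lambda}|||\geq \beta_{\tilde{\mathcal{A}}}|||\tilde{\lambda}|||$ for some $\beta_{\tilde{\mathcal{A}}}>0$.

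To extract this lower bound I would exploit the underlying variational coercivity of the transmission problem~\eqref{eq:3.2}. Since $\mu:=\frac{\kappa_{0}-\kappa}{\kappa_{0}}$ is piecewise constant on the disjoint balls and the interior DtN acts locally on each $\partial\Omega_{i}$, $\text{DtN}$ commutes with multiplication by $\mu$, and hence $\mathcal{A}\tilde{\lambda}=\tilde{\lambda}-\mathcal{V}(\mu\,\text{DtN}\,\tilde{\lambda})$. Pairing against the Cald\'eron dual $\sigma:=\mathcal{V}^{-1}\tilde{\lambda}\in H^{-\frac{1}{2}}(\partial\Omega)$ and using the self-adjointness identity $\langle\mathcal{V}^{-1}\tilde{\lambda},\mathcal{V}\phi\rangle_{\partial\Omega}=\langle\tilde{\lambda},\phi\rangle_{\partial\Omega}$ together with the Green identities $\langle\mathcal{V}^{-1}\tilde{\lambda},\tilde{\lambda}\rangle_{\partial\Omega}=\int_{\Omega^{-}}|\nabla u|^{2}+\int_{\Omega^{+}}|\nabla w|^{2}$ and $\langle\mu\tilde{\lambda},\text{DtN}\,\tilde{\lambda}\rangle_{\partial\Omega}=\int_{\Omega^{-}}\mu|\nabla u|^{2}$ (where $u\in H^{1}(\Omega^{-})$ and $w\in H^{1}(\Omega^{+})$ are the interior and exterior harmonic extensions of $\tilde{\lambda}$) leads to
\[
\langle \mathcal{V}^{-1}\tilde{\lambda},\mathcal{A}\tilde{\lambda}\rangle_{\partial\Omega}\;=\;\int_{\Omega^{+}}|\nabla w|^{2}+\int_{\Omega^{-}}\frac{\kappa}{\kappa_{0}}|\nabla u|^{2}\;\geq\;\min\!\Bigl\{1,\tfrac{\min_{i}\kappa_{i}}{\kappa_{0}}\Bigr\}\|\tilde{\lambda}\|_{\mathcal{V}^{-1}}^{2}.
\]
This $\kappa$-weighted coercivity, combined with Cauchy--Schwarz, gives $\|\mathcal{A}\tilde{\lambda}\|_{\mathcal{V}^{-1}}\geq\min\{1,\min_{i}\kappa_{i}/\kappa_{0}\}\|\tilde{\lambda}\|_{\mathcal{V}^{-1}}$.

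Next I would project to an admissible test function $\tilde{\sigma}:=\mathbb{Q}_{0}^{\perp}\sigma\in\breve{H}^{-\frac{1}{2}}(\partial\Omega)$ and analyse the correction $\tilde{a}(\tilde{\lambda},\tilde{\sigma})-\tilde{a}(\tilde{\lambda},\sigma)=-\langle \mathbb{Q}_{0}\sigma,\mathcal{V}(\mu\,\text{DtN}\,\tilde{\lambda})\rangle_{\partial\Omega}$; the potential contribution from the first summand of $\tilde{a}$ vanishes by the orthogonality relation of Remark~\ref{rem:Review_3} applied to $\tilde{\lambda}\in\breve{H}^{\frac{1}{2}}(\partial\Omega)$ and $\mathbb{Q}_{0}\sigma\in\mathcal{C}(\partial\Omega)$. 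Combining with the $N$-independent equivalence between $\|\cdot\|_{\mathcal{V}^{-1}}$ and $|||\cdot|||$ on $\breve{H}^{\frac{1}{2}}(\partial\Omega)$, which is afforded by the $N$-independent coercivity of $\mathcal{V}$ established in Lemmas~\ref{lem:Single} and~\ref{lem:Hassan2}, should yield the desired inequality $|||\tilde{\mathcal{A}}\tilde{\lambda}|||\geq\beta_{\tilde{\mathcal{A}}}|||\tilde{\lambda}|||$ with $\beta_{\tilde{\mathcal{A}}}$ depending only on the values of $\kappa$ and $\kappa_{0}$.

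For part~(ii), the dense range condition is equivalent to the injectivity of the adjoint operator $\tilde{\mathcal{A}}^{*}$ on $\breve{H}^{-\frac{1}{2}}(\partial\Omega)$: if $\tilde{\mathcal{A}}^{*}\tilde{\sigma}=0$, then the single-layer ansatz of Lemma~\ref{lem:equivalence} produces a pair $(\Phi^{-},\Phi^{+})\in\mathbb{H}(\Omega^{-})\times\mathbb{H}(\Omega^{+})$ solving the homogeneous transmission problem~\eqref{eq:3.2}, which must be zero by standard uniqueness, forcing $\tilde{\sigma}=0$. The hard part of the plan will be the quantitative bookkeeping in the projection step: the residual $\langle \mathbb{Q}_{0}\sigma,\mathcal{V}(\mu\,\text{DtN}\,\tilde{\lambda})\rangle_{\partial\Omega}$ must be absorbed by a suitable fraction of the coercive main term without introducing any $N$-dependent geometric constant, and one has to verify at the end that only the values of $\kappa$ and $\kappa_{0}$ survive in the final constant, as asserted by the lemma.
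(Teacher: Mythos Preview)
Your approach has a genuine gap, and it is precisely the difficulty that motivated the paper's new analysis. The coercivity identity
\[
\langle \mathcal{V}^{-1}\tilde{\lambda},\mathcal{A}\tilde{\lambda}\rangle_{\partial\Omega}
=\int_{\Omega^{+}}|\nabla w|^{2}+\int_{\Omega^{-}}\tfrac{\kappa}{\kappa_{0}}|\nabla u|^{2}
\;\geq\;\min\!\bigl\{1,\tfrac{\min_{i}\kappa_{i}}{\kappa_{0}}\bigr\}\,\|\tilde{\lambda}\|_{\mathcal{V}^{-1}}^{2}
\]
is correct, and it is essentially the Steinbach--Wendland variational argument discussed (and rejected) in Section~\ref{sec:3}. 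The problem is the subsequent step: you claim an ``$N$-independent equivalence between $\|\cdot\|_{\mathcal{V}^{-1}}$ and $|||\cdot|||$ on $\breve{H}^{1/2}(\partial\Omega)$, afforded by the $N$-independent coercivity of $\mathcal{V}$''. But coercivity of $\mathcal{V}$ only gives one direction, namely $\|\lambda\|_{\mathcal{V}^{-1}}\le c_{\mathcal V}^{-1/2}\,|||\lambda|||$ (up to $c_{\rm equiv}$). To transfer the lower bound $\|\mathcal{A}\tilde{\lambda}\|_{\mathcal{V}^{-1}}\ge c\,\|\tilde{\lambda}\|_{\mathcal{V}^{-1}}$ into $|||\tilde{\mathcal{A}}\tilde{\lambda}|||\ge\beta\,|||\tilde{\lambda}|||$ you also need $|||\mu|||\gtrsim\|\mu\|_{\mathcal{V}^{-1}}$, and that inequality requires the \emph{continuity} constant of $\mathcal{V}$, which the paper points out grows with $N$. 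The same obstruction hits the ``correction term'' $\langle\mathbb{Q}_{0}\sigma,\mathcal{V}(\mu\,\text{DtN}\,\tilde{\lambda})\rangle_{\partial\Omega}$: there is no mechanism to absorb it without invoking a global bound on $\mathcal{V}$. Even if you could repair this with $N$-independent constants, your $\beta_{\tilde{\mathcal A}}$ would still carry $c_{\mathcal V}$ and hence depend on the radii and the minimal separation distance, whereas the lemma asserts dependence \emph{only} on $\kappa$ and $\kappa_{0}$.

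The paper avoids $\mathcal{V}^{-1}$ entirely. Given $\widehat{\lambda}\in\breve{H}^{1/2}(\partial\Omega)$ it splits $\widehat{\lambda}=\widehat{\lambda}_{+}+\widehat{\lambda}_{-}$ according to the sign of $\kappa_i-\kappa_0$ on each sphere and tests against
\[
\widehat{\sigma}:=\tfrac{\kappa-\kappa_{0}}{\kappa_{0}}\,\text{DtN}\,\widehat{\lambda}_{+}-\tfrac{\kappa-\kappa_{0}}{\kappa_{0}}\,\text{DtN}\,\widehat{\lambda}_{-}\in\breve{H}^{-1/2}(\partial\Omega).
\]
The diagonal terms give $\sum_j |\tfrac{\kappa_j-\kappa_0}{\kappa_0}|\,|||\widehat{\lambda}_j|||^{2}$, and the cross term $\langle\widehat{\sigma},\mathcal{V}\text{DtN}(\tfrac{\kappa-\kappa_0}{\kappa_0}\widehat{\lambda})\rangle_{\partial\Omega}$ is handled with the Calder\'on identity $\text{DtN}\,\mathcal{V}\,\text{DtN}=\text{DtN}-\mathcal{W}$ together with the non-negativity of the hypersingular operator $\mathcal{W}$; this is where the sign splitting pays off, because the $\mathcal{V}$-term on the ``$+$'' block is manifestly non-negative while on the ``$-$'' block the Calder\'on trick converts it into a DtN term minus something non-negative. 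Since $|||\widehat{\sigma}|||^{*}\le\max_j|\tfrac{\kappa_j-\kappa_0}{\kappa_0}|\,|||\widehat{\lambda}|||$ by the isometry of Remark~\ref{rem:isometry}, the resulting $\beta_{\tilde{\mathcal A}}$ is an explicit ratio of dielectric quantities only. Part~(ii) is then obtained by the dual construction, choosing $\widehat{\lambda}$ via the inverse $\text{NtD}$ applied to the sign-split pieces of $\widehat{\sigma}$; no appeal to the transmission problem or Lemma~\ref{lem:equivalence} is needed.
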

		
		\begin{proof}
			The proof relies on the fact that the Dirichlet-to-Neumann map $\text{DtN} \colon \breve{H}^{\frac{1}{2}}(\partial \Omega)  \rightarrow \breve{H}^{-\frac{1}{2}}(\partial \Omega)$ is an isomorphism. We first prove Property \textit{(i)}. To this end, let $\widehat{\lambda} \in \breve{H}^{\frac{1}{2}}(\partial \Omega)$ be arbitrary. We decompose $\widehat{\lambda}$ as the sum of two functions as follows:
			\begin{align*}
			\widehat{\lambda}=\widehat{\lambda}_{+} + \widehat{\lambda}_{-}.
			\end{align*}
			
			Here, $\widehat{\lambda}_{+} \in \breve{H}^{\frac{1}{2}}(\partial \Omega)$ is a function equal to $\widehat{\lambda}$ on all spheres $\partial \Omega_{i}, ~i \in \{1, \ldots, N\}$ such that $\kappa_i - \kappa_0 > 0$ and zero otherwise. Similarly, $\widehat{\lambda}_{-}\in \breve{H}^{\frac{1}{2}}(\partial \Omega)$ is a function equal to $\widehat{\lambda}$ on all spheres $\partial \Omega_{i}, ~i \in \{1, \ldots, N\}$ such that $\kappa_i - \kappa_0 < 0$ and zero otherwise. We recall that we have assumed that $\kappa \neq \kappa_0$ as mentioned in Remark \ref{rem:trivial}.

			We now define a corresponding test function $\widehat{\sigma} \in \breve{H}^{\frac{1}{2}}(\partial \Omega)$ by setting
			\begin{align*}
			\widehat{\sigma}:=\frac{\kappa-\kappa_0}{\kappa_0}\text{DtN}\widehat{\lambda}_{+} - \frac{\kappa-\kappa_0}{\kappa_0}\text{DtN}\widehat{\lambda}_{-}.
			\end{align*}
			
			For notational convenience, we define sets of indices $N_+\subset \mathbb{N}$ and $N_-\subset \mathbb{N}$ such that $i \in N_+ \iff \kappa_i - \kappa_0 > 0$ and $i \in N_- \iff \kappa_i - \kappa_0< 0$. { Moreover, for all $j =1, \ldots, N$ we define
				\begin{align*}
				\widehat{\lambda}_j&:=\begin{cases}
				\widehat{\lambda} \quad &\text{on } \partial \Omega_j,\\
				0 \quad &\text{otherwise, }
				\end{cases}\\
				\widehat{\sigma}_j&:=\begin{cases}
				\widehat{\sigma} \quad &\text{on } \partial \Omega_j,\\
				0 \quad &\text{otherwise, }
				\end{cases}
				\end{align*}}
			
			It follows that the reduced bilinear form $\tilde{a}$ satisfies
			\begin{align*}
			\tilde{a}(\widehat{\lambda}, \widehat{\sigma})&= \sum_{j \in N_+}\frac{\kappa_j-\kappa_0}{\kappa_0} ||| \widehat{\lambda}_j|||^2 + \sum_{j \in N_-}\frac{\kappa_0-\kappa_j}{\kappa_0} ||| \widehat{\lambda}_j|||^2  + \underbrace{\left\langle \widehat{\sigma}, \mathcal{V}\text{DtN}\left(\frac{\kappa-\kappa_0}{\kappa_0}\widehat{\lambda}\right)\right\rangle_{\partial \Omega }}_{:=J}.
			\end{align*}
			
			Note that due to our choice of test function $\widehat{\sigma}$, the coefficients of all terms in the above two sums are positive. Therefore, let us focus on analysing the term $J$. Using the decomposition we have introduced, we obtain that
			\begin{align*}
			J=\left\langle \widehat{\sigma}, \mathcal{V}\text{DtN}\left(\frac{\kappa-\kappa_0}{\kappa_0}\widehat{\lambda}\right)\right\rangle_{\partial \Omega}&= \left\langle \text{DtN}\left(\frac{\kappa-\kappa_0}{\kappa_0}\widehat{\lambda}_+\right), \mathcal{V}\text{DtN}\left(\frac{\kappa-\kappa_0}{\kappa_0}\widehat{\lambda}_+\right)\right\rangle_{\partial \Omega}\\
			&-  \left\langle \text{DtN}\left(\frac{\kappa-\kappa_0}{\kappa_0}\widehat{\lambda}_-\right), \mathcal{V}\text{DtN}\left(\frac{\kappa-\kappa_0}{\kappa_0}\widehat{\lambda}_-\right)\right\rangle_{\partial \Omega}.
			\end{align*}
			
			Using the Calderon identities (see, e.g., \cite[Theorem 3.8.7]{Schwab}), we further obtain that 
			\begin{align*}
			-  \left\langle \text{DtN}\left(\frac{\kappa-\kappa_0}{\kappa_0}\widehat{\lambda}_-\right), \mathcal{V}\text{DtN}\left(\frac{\kappa-\kappa_0}{\kappa_0}\widehat{\lambda}_-\right)\right\rangle_{\partial \Omega} =& -  \left\langle \text{DtN}\left(\frac{\kappa-\kappa_0}{\kappa_0}\widehat{\lambda}_-\right), \left(\frac{\kappa-\kappa_0}{\kappa_0}\widehat{\lambda}_-\right)\right\rangle_{\partial \Omega}\\
			&+ \left\langle  \mathcal{W}\left(\frac{\kappa-\kappa_0}{\kappa_0}\widehat{\lambda}_-\right), \left(\frac{\kappa-\kappa_0}{\kappa_0}\widehat{\lambda}_-\right)\right\rangle_{\partial \Omega}.
			\end{align*}
			
			The non-negativity of the hypersingular operator $\mathcal{W} \colon H^{\frac{1}{2}}(\partial \Omega) \rightarrow H^{-\frac{1}{2}}(\partial \Omega)$ (see Property 2 of Section \ref{sec:2a}) thus implies that
			\begin{align*}
			J \geq -  \left\langle \text{DtN}\left(\frac{\kappa-\kappa_0}{\kappa_0}\widehat{\lambda}_-\right), \left(\frac{\kappa-\kappa_0}{\kappa_0}\widehat{\lambda}_-\right)\right\rangle_{\partial \Omega} = -\sum_{j \in N_{-}} \Big( \frac{\kappa_j-\kappa_0}{\kappa_0}\Big)^2 ||| \widehat{\lambda}_j |||^2.
			\end{align*}

			Consequently, we obtain that
			\begin{align*}
			\tilde{a}(\widehat{\lambda}, \widehat{\sigma})&\geq \sum_{j \in N_+}\frac{\kappa_j-\kappa_0}{\kappa_0} ||| \widehat{\lambda}_j|||^2 + \sum_{j \in N_-}\frac{\kappa_0-\kappa_j}{\kappa_0}||| \widehat{\lambda}_j|||^2 -\sum_{j \in N_-}\Big(\frac{\kappa_0-\kappa_j}{\kappa_0}\Big)^2||| \widehat{\lambda}_j|||^2\\
			&= \sum_{j \in N_+}\frac{\kappa_j-\kappa_0}{\kappa_0} ||| \widehat{\lambda}_j|||^2 - \sum_{j \in N_-}\frac{\kappa_j}{\kappa_0}\frac{\kappa_j-\kappa_0}{\kappa_0} ||| \widehat{\lambda}_j|||^2\\
			&\geq \min\left\{ \min_{j \in N_+}\frac{\kappa_j-\kappa_0}{\kappa_0},~ \min_{j \in N_-} \frac{\kappa_j}{\kappa_0}\frac{\kappa_0-\kappa_j}{\kappa_0}\right\} ||| \widehat{\lambda}|||^2.
			\end{align*}
			
			Furthermore, using Remark \ref{rem:isometry} we obtain that the norm of the test function $\widehat{\sigma}$ is given by
			\begin{align*}
			{||| \widehat{\sigma}|||^*} &= {\Big|\Big|\Big| \frac{\kappa-\kappa_0}{\kappa_0}\text{DtN}\widehat{\lambda}_{+} - \frac{\kappa-\kappa_0}{\kappa_0}\text{DtN}\widehat{\lambda}_{-} \Big|\Big|\Big|^*}\\
			&=\Big|\Big|\Big| \frac{\kappa-\kappa_0}{\kappa_0}\widehat{\lambda}_{+} - \frac{\kappa-\kappa_0}{\kappa_0}\widehat{\lambda}_{-} \Big|\Big|\Big|\\
			&\leq \max_{j=1, \ldots, N}  \Big \vert \frac{\kappa_j-\kappa_0}{\kappa_0}\Big \vert||| \widehat{\lambda}|||.
			\end{align*}
			
			We therefore define the constant $\beta_{\tilde{\mathcal{A}}} > 0$ as
			\begin{equation}\label{eq:coercive}
			\beta_{\tilde{\mathcal{A}}}:= \frac{\min\left\{ \min_{j \in N_+}\frac{\kappa_j-\kappa_0}{\kappa_0},~ \min_{j \in N_-} \frac{\kappa_j}{\kappa_0}\frac{\kappa_0-\kappa_j}{\kappa_0} \right\}}{ \max_{j=1, \ldots, N}  \Big \vert \frac{\kappa_j-\kappa_0}{\kappa_0}\Big \vert}.
			\end{equation}
			
			We then obtain that
			\begin{align*}
			\inf_{0 \neq\tilde{\lambda} \in \breve{H}^{\frac{1}{2}}(\partial \Omega)} \sup_{0\neq \tilde{\sigma} \in \breve{H}^{-\frac{1}{2}}(\partial \Omega)} \frac{\vert\tilde{a}(\tilde{\lambda}, \tilde{\sigma})\vert}{ ||| \tilde{\lambda}||| \, ||| \tilde{\sigma}|||^* } \geq  \beta_{\tilde{\mathcal{A}}},
			\end{align*}
			which completes the proof of Property \textit{(i)}.
			
			Let us now turn to the proof of Property \textit{(ii)}. Let $0\neq \widehat{\sigma} \in \breve{H}^{-\frac{1}{2}}(\partial \Omega)$ be arbitrary and let $\text{NtD} \colon \breve{H}^{-\frac{1}{2}}(\partial \Omega) \rightarrow \breve{H}^{\frac{1}{2}}(\partial \Omega)$ be the inverse of the Dirichlet-to-Neumann map. Using the decomposition and notation developed above, it is possible to define a corresponding function $\widehat{\lambda} \in \breve{H}^{\frac{1}{2}}(\partial \Omega)$ as
			\begin{align*}
			\widehat{\lambda}:=\sum_{j \in N_+}\frac{\kappa_0}{\kappa_j-\kappa_0} \text{NtD}\widehat{\sigma}_j - \sum_{j \in N_-}\frac{\kappa_0}{\kappa_j-\kappa_0} \text{NtD}\widehat{\sigma}_j.
			\end{align*}
			
			With this choice of $\widehat{\lambda}$, we immediately obtain that
			\begin{align*}
			\widehat{\sigma}=\sum_{j \in N_+}\frac{\kappa_j-\kappa_0}{\kappa_0} \text{DtN}\widehat{\lambda}_j - \sum_{j \in N_-}\frac{\kappa_j-\kappa_0}{\kappa_0} \text{DtN}\widehat{\lambda}_j.
			\end{align*}
			
			Therefore, a similar calculation to the one used to prove Property \textit{(i)} reveals that
			\begin{align*}
			\vert \tilde{a}(\widehat{\lambda}, \widehat{\sigma})\vert &\geq \min\left\{ \min_{j \in N_+}\frac{\kappa_j-\kappa_0}{\kappa_0},~ \min_{j \in N_-} \frac{\kappa_j}{\kappa_0}\frac{\kappa_0-\kappa_j}{\kappa_0}\right\} ||| \widehat{\lambda}|||^2\\
			&\geq  \frac{\beta_{\tilde{\mathcal{A}}}}{\max_{j=1, \ldots, N}  \Big \vert \frac{\kappa_j-\kappa_0}{\kappa_0}\Big \vert} \big(||| \widehat{\sigma} |||^*\big)^2.
			\end{align*}
			
			We conclude that for all $0\neq \tilde{\sigma} \in \breve{H}^{-\frac{1}{2}}(\partial \Omega)$ it holds that
			\begin{align*}
			\sup_{0\neq \tilde{\lambda} \in \breve{H}^{\frac{1}{2}}(\partial \Omega)} \vert \tilde{a}(\tilde{\lambda}, \tilde{\sigma})\vert > 0.
			\end{align*}
		\end{proof}

		An immediate consequence of Lemma \ref{lem:inf-sup1} is that both the modified weak formulation \eqref{eq:weak2a}-\eqref{eq:weak2b} and the weak formulation \eqref{eq:weak1} are well-posed. \vspace{2mm}
		
		\subsubsection{\textbf{The New Analysis of the Discrete Problem}}~
		Our next goal is to prove that the Galerkin discretisation \eqref{eq:Galerkin} is also well-posed with a stability constant that is independent of the number of open balls $N$. Similar to the infinite-dimensional case, we adopt an indirect approach, and reformulate Equation \eqref{eq:Galerkin} as a modified Galerkin discretisation using the projection operators $\mathbb{P}_0$ and $\mathbb{P}_0^\perp$ introduced through Lemma \ref{lem:decomp}. We first define the relevant approximation space.

		\begin{definition}[Reduced Global Approximation Space]\label{def:Appromxation2}
			Let $\ell_{\max} \in \mathbb{N}$. We define the finite-dimensional Hilbert space $W_0^{\ell_{\max}} \subset \breve{H}^{\frac{1}{2}}(\partial \Omega)$ as the set
			\begin{align*}
			W_0^{\ell_{\max}} := \Big\{u \in W^{\ell_{\max}}(\partial \Omega)\colon \mathbb{P}_0 u = 0\Big\},
			\end{align*}
			equipped with the $(\cdot, \cdot)_{W^{\ell_{\max}}}$ inner product.
		\end{definition}
		
		\begin{remark}
			Using the fact that the spherical harmonics functions are smooth, we can immediately infer that the finite-dimensional Hilbert spaces $W_0^{\ell_{\max}} \subset W^{\ell_{\max}} \subset \breve{H}^{\frac{1}{2}}(\partial \Omega)$ also satisfy
			\begin{align*}
			W_0^{\ell_{\max}} \subset W^{\ell_{\max}} \subset \breve{H}^{-\frac{1}{2}}(\partial \Omega) \quad \text{ and } \quad 
			\forall \lambda_{\ell_{\max}} \in W_0^{\ell_{\max}} \colon ~\Vert \lambda_{\ell_{\max}}\Vert^2_{W^{\ell_{\max}}}= ||| \lambda_{\ell_{\max}}|||^2.
			\end{align*}
			
			Note that if one wishes to view $W^{\ell_{\max}}$ and $W^{\ell_{\max}}_0$ as subspaces of $H^{-\frac{1}{2}}(\partial \Omega)$, then the definition of the equipped norms would have to be modified accordingly. \vspace{1cm}
		\end{remark}

		\noindent {\textbf{Modified Galerkin Discretisation of the Integral Equation \eqref{eq:3.3}}~
			
			Let $\sigma_f \in H^{-\frac{1}{2}}(\partial \Omega)$. Find functions $(\lambda_0, \lambda_{\ell_{\max}}) \in \mathcal{C}(\partial \Omega) \times W_0^{\ell_{\max}}$ such that for all test functions $(\sigma_0, \sigma_{\ell_{\max}}) \in \mathcal{C}(\partial \Omega) \times W_0^{\ell_{\max}}$ it holds that
			\begin{align}\label{eq:Galerkin2a}
			\left( \sigma_0, \lambda_0\right)_{L^2(\partial \Omega)}- \left(\sigma_0, \mathcal{V}\text{DtN}\Big(\frac{\kappa_0-\kappa}{\kappa_0}{\lambda_{\ell_{\max}}}\Big)\right)_{L^2(\partial \Omega)}&= \frac{4\pi}{\kappa_0}\left ( \sigma_0, \mathcal{V}\sigma_f \right) _{L^2(\partial \Omega)},\\[0.5em]
			\left({\sigma}_{\ell_{\max}}, {\lambda}_{\ell_{\max}}\right)_{L^2(\partial \Omega)}- \left( {\sigma}_{\ell_{\max}}, \mathcal{V}\text{DtN}\Big(\frac{\kappa_0-\kappa}{\kappa_0}{\lambda}_{\ell_{\max}}\Big)\right)_{L^2(\partial \Omega)}&= \frac{4\pi}{\kappa_0}\left ({\sigma}_{\ell_{\max}}, \mathcal{V}\sigma_f \right) _{L^2(\partial \Omega)}. \label{eq:Galerkin2b}
			\end{align}
			
			It is a simple exercise to prove that the modified Galerkin discretisation \eqref{eq:Galerkin2a}-\eqref{eq:Galerkin2b} is indeed equivalent to the Galerkin discretisation \eqref{eq:Galerkin}.
			
			The structure of the Galerkin discretisation \eqref{eq:Galerkin2a}-\eqref{eq:Galerkin2b} is very similar to the structure of the infinite-dimensional modified weak formulation \eqref{eq:weak2a}-\eqref{eq:weak2b}. Indeed, we observe once again that Equation \eqref{eq:Galerkin2b} involves only the unknown function $\lambda_{\ell_{\max}} \in W_0^{\ell_{\max}}$. It is therefore clear that if Equation \eqref{eq:Galerkin2b} is uniquely solvable, then Equation \eqref{eq:Galerkin2a} is also uniquely solvable, and hence the Galerkin discretisation \eqref{eq:Galerkin} is well-posed. Moreover, thanks to the analysis carried out for the infinite-dimensional Equation \eqref{eq:weak2b}, well-posedness of the finite-dimensional equation \eqref{eq:Galerkin2b} follows almost immediately. Indeed, we have the following result.
			
			\begin{lemma}\label{lem:inf-sup2}
				Let the bilinear form $\tilde{a} \colon \breve{H}^{\frac{1}{2}}(\partial \Omega) \times \breve{H}^{-\frac{1}{2}}(\partial \Omega)\rightarrow \mathbb{R}$ be defined as in Definition \ref{def:Atilde}, and let the constant $\beta_{\tilde{\mathcal{A}}}> 0$ be defined through Equation \eqref{eq:coercive} as in the proof of Lemma \ref{lem:inf-sup1}. Then it holds that
				\begin{align*}
				\inf_{0 \neq {\lambda}_{\ell_{\max}} \in W_0^{\ell_{\max}} } \sup_{0\neq {\sigma}_{\ell_{\max}} \in W_0^{\ell_{\max}}} \frac{\vert\tilde{a}({\lambda}_{\ell_{\max}}, {\sigma}_{\ell_{\max}})\vert}{ |||{\lambda}_{\ell_{\max}}||| \, |||{\sigma}_{\ell_{\max}}|||^* } \geq \beta_{\tilde{\mathcal{A}}} > 0. \qquad \text{\emph{(Discrete inf-sup Condition)}}
				\end{align*}
			\end{lemma}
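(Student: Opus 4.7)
The plan is to imitate verbatim the argument used in the proof of Lemma \ref{lem:inf-sup1}, exploiting the fortunate fact that the test function constructed there already lives in $W_0^{\ell_{\max}}$ whenever the trial function does. Since trial and test spaces coincide in this finite-dimensional setting, only the bounded-below estimate is required; dense range is automatic by a dimension count. So I would pick $\lambda_{\ell_{\max}} \in W_0^{\ell_{\max}}$ arbitrary and decompose it as $\lambda_{\ell_{\max}} = \lambda_{\ell_{\max}, +} + \lambda_{\ell_{\max}, -}$ according to the sign of $\kappa_j - \kappa_0$ on each sphere, in parallel with the proof of Lemma \ref{lem:inf-sup1}. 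Because $W_0^{\ell_{\max}}$ is defined locally sphere-by-sphere (Definition \ref{def:6.7} and Definition \ref{def:Appromxation2}), both $\lambda_{\ell_{\max},\pm}$ stay in $W_0^{\ell_{\max}}$. Then I would take as test function
$$\sigma_{\ell_{\max}} := \tfrac{\kappa - \kappa_0}{\kappa_0}\,\text{DtN}\,\lambda_{\ell_{\max},+} - \tfrac{\kappa - \kappa_0}{\kappa_0}\,\text{DtN}\,\lambda_{\ell_{\max},-}.$$

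The key point — really the only new ingredient compared to the proof of Lemma \ref{lem:inf-sup1} — is to verify that $\sigma_{\ell_{\max}} \in W_0^{\ell_{\max}}$. Since $\Omega^-$ is a disjoint union of open balls, the interior harmonic extension decouples between balls, so the global $\text{DtN}$ map acts independently on each sphere. On a single ball of radius $r_j$ centred at $\mathbf{x}_j$, the spherical harmonics $\mathcal{Y}_\ell^m$ are eigenfunctions of the local $\text{DtN}$ with eigenvalue $\ell/r_j$; hence $\text{DtN}$ stabilises $W^{\ell_{\max}}(\partial \Omega_j)$ and kills its $\ell=0$ component. Combined with the fact that $\tfrac{\kappa-\kappa_0}{\kappa_0}$ is piecewise constant — so that multiplication by it preserves the local spherical-harmonic structure on each sphere — we indeed have $\sigma_{\ell_{\max}} \in W_0^{\ell_{\max}}$.

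From this point on the argument is a literal copy of the computation carried out in Lemma \ref{lem:inf-sup1}: applying the Calder\'on identities, using the non-negativity of the hypersingular operator $\mathcal{W}$ on $\lambda_{\ell_{\max},-}$, and invoking the isometry $|||\text{DtN}\tilde{\lambda}|||^* = |||\tilde{\lambda}|||$ of Remark \ref{rem:isometry}, one obtains the same two bounds as in the continuous case, namely
$$\tilde{a}(\lambda_{\ell_{\max}}, \sigma_{\ell_{\max}}) \;\geq\; \min\!\left\{\min_{j \in N_+} \tfrac{\kappa_j - \kappa_0}{\kappa_0},\; \min_{j \in N_-} \tfrac{\kappa_j}{\kappa_0}\tfrac{\kappa_0 - \kappa_j}{\kappa_0}\right\} |||\lambda_{\ell_{\max}}|||^2$$
together with $|||\sigma_{\ell_{\max}}|||^* \leq \max_j \big|\tfrac{\kappa_j-\kappa_0}{\kappa_0}\big| \cdot |||\lambda_{\ell_{\max}}|||$. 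Taking the quotient reproduces precisely the constant $\beta_{\tilde{\mathcal{A}}}$ defined in \eqref{eq:coercive}.

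The main (in fact only) obstacle is the stability of $W_0^{\ell_{\max}}$ under both $\text{DtN}$ and multiplication by $\tfrac{\kappa - \kappa_0}{\kappa_0}$. Both closures hinge on the geometric fact that $\Omega^-$ is a disjoint union of balls and on the choice of approximation space being generated by spherical harmonics on each sphere; neither would hold for a general Galerkin discretisation. This is precisely what allows the discrete inf-sup constant to equal its continuous counterpart, and hence to remain $N$-independent.
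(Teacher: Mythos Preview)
Your proposal is correct and follows essentially the same approach as the paper: the paper's proof also observes that $\text{DtN}\colon W_0^{\ell_{\max}}\to W_0^{\ell_{\max}}$ is an isomorphism (computing the same eigenvalue $\ell/r_j$ on spherical harmonics), chooses the identical test function $\widehat{\sigma}=\frac{\kappa-\kappa_0}{\kappa_0}\text{DtN}\widehat{\lambda}_+-\frac{\kappa-\kappa_0}{\kappa_0}\text{DtN}\widehat{\lambda}_-$, and then declares the remainder identical to the proof of Lemma~\ref{lem:inf-sup1}. Your explicit remark that multiplication by the piecewise constant $\frac{\kappa-\kappa_0}{\kappa_0}$ preserves $W_0^{\ell_{\max}}$, and your note that dense range is automatic in finite dimensions, are minor elaborations that the paper leaves implicit.
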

			\begin{proof}
				The proof uses the fact that the Dirichlet-to-Neumann operator $\text{DtN} \colon W_0^{\ell_{\max}}  \rightarrow W_0^{\ell_{\max}} $ is an isomorphism. Indeed, consider $\lambda_j \in W_0^{\ell_{\max}}(\partial \Omega_j)$ given by
				\begin{align*}
				\lambda_j(\bold{x})= \sum_{{\ell}=1}^{\ell_{\max}} \sum_{m=-\ell}^{m=+\ell} [\lambda_j]_{\ell}^m \mathcal{Y}_{\ell}^m\left(\frac{\bold{x}-\bold{x}_j}{\vert \bold{x}-\bold{x}_j\vert}\right).
				\end{align*}
				
				Then the function $\text{DtN} \lambda_j  \in W_0^{\ell_{\max}}$ is given by
				\begin{align*}
				\text{DtN}\lambda_j(\bold{x})= \sum_{{\ell}=1}^{\ell_{\max}} \sum_{m=-\ell}^{m=+\ell} \frac{\ell}{r_j}[\lambda_j]_{\ell}^{m} \mathcal{Y}_{\ell}^m\left(\frac{\bold{x}-\bold{x}_j}{\vert \bold{x}-\bold{x}_j\vert}\right).
				\end{align*}

				Consequently given any arbitrary function $\widehat{\lambda} \in W_0^{\ell_{\max}} \subset \breve{H}^{\frac{1}{2}}(\partial \Omega)$, we may pick as the test function $\widehat{\sigma} \in W_0^{\ell_{\max}} \subset \breve{H}^{-\frac{1}{2}}(\partial \Omega)$ given by
				\begin{align*}
				\widehat{\sigma}=\frac{\kappa-\kappa_0}{\kappa_0}\text{DtN}\widehat{\lambda}_{+} - \frac{\kappa-\kappa_0}{\kappa_0}\text{DtN}\widehat{\lambda}_{-},
				\end{align*}
				where we have used the decomposition $\widehat{\lambda}= \widehat{\lambda}_{+} + \widehat{\lambda}_{-}$ introduced in the proof of Lemma \ref{lem:inf-sup1}. The remainder of the proof is now identical to the proof of Lemma \ref{lem:inf-sup1} and yields the discrete inf-sup constant $\beta_{\tilde{\mathcal{A}}}$ defined through Equation \eqref{eq:coercive}.
			\end{proof}
			
			Lemma \ref{lem:inf-sup2} now has several important consequences: \vspace{3mm}
			
			\begin{enumerate}
				\item Both the modified Galerkin discretisation \eqref{eq:Galerkin2a}-\eqref{eq:Galerkin2b} and the Galerkin discretisation \eqref{eq:Galerkin} are well-posed. \vspace{0.2mm}
				
				\item For every choice of the approximation parameter $\ell_{\max} \in \mathbb{N}$, the finite-dimensional solution to the Galerkin discretisation \eqref{eq:Galerkin} satisfies a standard quasi-optimality result. \vspace{2mm}
				
				\item Since the discrete inf-sup constant $\beta_{\tilde{\mathcal{A}}}$ is independent of the approximation space, we obtain stability and convergence to the exact solution of the approximate solutions as the approximation parameter $\ell_{\max} \to \infty$. \vspace{2mm}
			\end{enumerate}
			
			All of the above results can be proven using text-book functional analysis techniques. We state one particular quasi-optimality result concerning solutions to the finite-dimensional equation \eqref{eq:Galerkin2b} which will be of use in the next subsection. 
			
			\begin{lemma}[Partial Quasi-Optimality]\label{lem:quasi1}
				Let $C_{\tilde{\mathcal{A}}}> 0$ be the continuity constant defined through Equation \eqref{eq:contin} in Lemma \ref{lem:contin2}, let $\beta_{\tilde{\mathcal{A}}}>0$ be the inf-sup constant defined through Equation \eqref{eq:coercive} in Lemma \ref{lem:inf-sup1}, let $\sigma_f \in {H}^{-\frac{1}{2}}(\partial \Omega)$, let $\ell_{\max} \in \mathbb{N}$, let $\lambda_{\ell_{\max}} \in W_0^{\ell_{\max}}$ be the unique solution to the finite-dimensional Equation \eqref{eq:Galerkin2b} with right hand side given by $\sigma_f$, and let $\tilde{\lambda} \in \breve{H}^{\frac{1}{2}}(\partial \Omega)$ be the unique solution to infinite-dimensional Equation \eqref{eq:weak2b} with right hand side given by $\sigma_f$. Then it holds that
				\begin{align}\label{eq:quasi1}
				|||\tilde{\lambda}-\lambda_{\ell_{\max}}||| \leq \left(1 + \frac{C_{\tilde{\mathcal{A}}}}{\beta_{\tilde{\mathcal{A}}}}\right) \inf_{\psi \in W_0^{\ell_{\max}}} ||| \tilde{\lambda}-\psi|||.
				\end{align}
			\end{lemma}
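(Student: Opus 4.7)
The plan is to execute the standard Céa-style argument for non-symmetric bilinear forms, exploiting Galerkin orthogonality together with the infinite-dimensional continuity of $\tilde a$ from Lemma \ref{lem:contin2} and the discrete inf-sup stability from Lemma \ref{lem:inf-sup2}. First, I would verify Galerkin orthogonality: since $W_0^{\ell_{\max}} \subset \breve H^{-\frac{1}{2}}(\partial\Omega)$ consists of smooth functions on each sphere, the $L^2(\partial \Omega)$ inner product used in \eqref{eq:Galerkin2b} agrees with the duality pairing $\langle \cdot, \cdot\rangle_{\partial\Omega}$ used in \eqref{eq:weak2b}. Subtracting the two equations, both tested against an arbitrary $\sigma_{\ell_{\max}} \in W_0^{\ell_{\max}}$, gives
\begin{equation*}
\tilde{a}\bigl(\tilde{\lambda} - \lambda_{\ell_{\max}},\, \sigma_{\ell_{\max}}\bigr) = 0 \qquad \forall\, \sigma_{\ell_{\max}} \in W_0^{\ell_{\max}}.
\end{equation*}

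Next, for an arbitrary $\psi \in W_0^{\ell_{\max}}$, I split
$\tilde{\lambda} - \lambda_{\ell_{\max}} = (\tilde{\lambda} - \psi) + (\psi - \lambda_{\ell_{\max}})$
and estimate each summand separately. The first summand is handled by the triangle inequality. For the second, which lives in the discrete space, I would apply the discrete inf-sup condition of Lemma \ref{lem:inf-sup2}:
\begin{equation*}
\beta_{\tilde{\mathcal{A}}}\, ||| \psi - \lambda_{\ell_{\max}}||| \,\leq\, \sup_{0 \neq \sigma_{\ell_{\max}} \in W_0^{\ell_{\max}}} \frac{\bigl|\tilde{a}(\psi - \lambda_{\ell_{\max}},\, \sigma_{\ell_{\max}})\bigr|}{|||\sigma_{\ell_{\max}}|||^*}.
\end{equation*}
Using Galerkin orthogonality to rewrite $\tilde{a}(\psi - \lambda_{\ell_{\max}}, \sigma_{\ell_{\max}}) = \tilde{a}(\psi - \tilde{\lambda},\, \sigma_{\ell_{\max}})$, and then invoking the continuity bound of Lemma \ref{lem:contin2}, I obtain
\begin{equation*}
|||\psi - \lambda_{\ell_{\max}}||| \,\leq\, \frac{C_{\tilde{\mathcal{A}}}}{\beta_{\tilde{\mathcal{A}}}}\, |||\tilde{\lambda} - \psi|||.
\end{equation*}

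Combining with the triangle inequality yields
\begin{equation*}
|||\tilde{\lambda} - \lambda_{\ell_{\max}}||| \,\leq\, \Bigl(1 + \tfrac{C_{\tilde{\mathcal{A}}}}{\beta_{\tilde{\mathcal{A}}}}\Bigr) |||\tilde{\lambda} - \psi|||,
\end{equation*}
and taking the infimum over $\psi \in W_0^{\ell_{\max}}$ completes the proof. There is essentially no real obstacle here; this is the classical Xu--Zikatanov argument. The only subtlety worth flagging is the coincidence of the $L^2(\partial\Omega)$ inner product and the $\langle \cdot,\cdot\rangle_{\partial\Omega}$ duality pairing on the discrete space, which is what makes Galerkin orthogonality available in the form needed. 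All the genuinely non-trivial work, namely proving the $N$-independent continuity constant $C_{\tilde{\mathcal{A}}}$ and the $N$-independent discrete inf-sup constant $\beta_{\tilde{\mathcal{A}}}$, has already been carried out in Lemmas \ref{lem:contin2} and \ref{lem:inf-sup2}.
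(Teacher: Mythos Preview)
Your proposal is correct and matches what the paper intends: the paper's own proof consists solely of the remark ``The proof is also text-book functional analysis,'' and your C\'ea-type argument via Galerkin orthogonality, discrete inf-sup (Lemma~\ref{lem:inf-sup2}), and continuity (Lemma~\ref{lem:contin2}) is precisely that textbook argument spelled out.
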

			\begin{proof}
				The proof is also text-book functional analysis.
			\end{proof}
			
			Notice that thus far we have only proved well-posedness of the infinite-dimensional weak formulation \eqref{eq:weak1} and the Galerkin discretisation \eqref{eq:Galerkin} involving the \emph{surface electrostatic potential}. However, the main results in Section \ref{sec:2} have been formulated for the \emph{induced surface charge}. Therefore, the next step in our analysis will be to transfer our existing results to the infinite-dimensional weak formulation \eqref{eq:weak1a} and the Galerkin discretisation \eqref{eq:Galerkina} involving the exact and approximate induced surface charge.
			
			\subsection{Well-Posedness Analysis for the Induced Surface Charge}~
			As the astute reader may already have realised, the well-posedness analysis for the infinite-dimensional weak formulation \eqref{eq:weak1a} and the Galerkin discretisation \eqref{eq:Galerkina} is exceedingly simple because the underlying boundary integral operator is simply $\mathcal{A}^*$, i.e., the adjoint of the boundary integral operator $\mathcal{A}$, which has already been completely analysed in both the infinite-dimensional and finite dimensional setting. To facilitate the subsequent exposition, we introduce some additional notation.\vspace{3mm}
			
			{	\begin{definition}[Finite-Dimensional Projection Operators]\label{def:fin_proj}~
					Let $\ell_{\max} \in \mathbb{N}$. We define the projection operator $\mathbb{P}_{\ell_{\max}}\colon H^{\frac{1}{2}}(\partial \Omega) \rightarrow W^{\ell_{\max}}$ as the mapping with the property that for any $\psi \in H^{\frac{1}{2}}(\partial \Omega)$,  $\mathbb{P}_{\ell_{\max}}\psi $ is the unique element of $W^{\ell_{\max}}$ satisfying
					\begin{align*}
					\left(\phi_{\ell_{\max}}, \mathbb{P}_{\ell_{\max}}\psi\right)_{L^2(\partial \Omega)}=\left\langle \phi_{\ell_{\max}}, \psi\right\rangle_{\partial \Omega} \qquad \forall \phi_{\ell_{\max}} \in W^{\ell_{\max}},
					\end{align*}
					
					Moreover, we define the projection operator $\mathbb{Q}_{\ell_{\max}}\colon H^{-\frac{1}{2}}(\partial \Omega) \rightarrow W^{\ell_{\max}}$ as the mapping with the property that for any $\sigma \in H^{-\frac{1}{2}}(\partial \Omega)$,  $\mathbb{Q}_{\ell_{\max}}\sigma $ is the unique element of $W^{\ell_{\max}}$ satisfying
					\begin{align*}
					\left(\mathbb{Q}_{\ell_{\max}}\sigma, \phi_{\ell_{\max}}\right)_{L^2(\partial \Omega)}&=\left\langle \sigma, \phi_{\ell_{\max}}\right\rangle_{\partial \Omega} \qquad \forall \phi_{\ell_{\max}} \in W^{\ell_{\max}}.
					\end{align*}
			\end{definition}}
			
			{	\begin{remark}
					Consider the setting of Definition \ref{def:fin_proj}. It is possible to show that the projection operators $\mathbb{P}_{\ell_{\max}}$ and $\mathbb{Q}_{\ell_{\max}}$ are stable, i.e., for all $\psi \in H^{\frac{1}{2}}(\partial \Omega)$ and all $\sigma \in H^{-\frac{1}{2}}(\partial \Omega)$ it holds that
					\begin{align*}
					||| \mathbb{P}_{\ell_{\max}} \psi ||| \leq ||| \psi ||| \quad \text{and} \quad ||| \mathbb{Q}_{\ell_{\max}}\sigma|||^* \leq ||| \sigma |||^*.
					\end{align*}
				\end{remark}
			}
			
			We now have the following simple result.
			
			\begin{theorem}[Infinite-Dimensional Well-Posedness]
				The infinite-dimensional weak formulation \eqref{eq:weak1a} of the boundary integral equation \eqref{eq:3.3a} is well-posed.
			\end{theorem}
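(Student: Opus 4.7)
The plan is to piggyback on the well-posedness analysis already established for the weak formulation \eqref{eq:weak1} of the surface electrostatic potential and transfer it to \eqref{eq:weak1a} by a duality argument. First, I would observe that Lemmas \ref{lem:contin}, \ref{lem:contin2}, and \ref{lem:inf-sup1}, combined with the equivalence between the weak formulation \eqref{eq:weak1} and the modified weak formulation \eqref{eq:weak2a}--\eqref{eq:weak2b}, imply that the operator $\mathcal{A} \colon H^{\frac{1}{2}}(\partial \Omega) \to H^{\frac{1}{2}}(\partial \Omega)$ is continuous, injective, and has dense range with closed image. In other words, $\mathcal{A}$ is a continuous linear bijection from $H^{\frac{1}{2}}(\partial \Omega)$ onto itself, and Banach's isomorphism theorem produces a bounded inverse $\mathcal{A}^{-1}$.

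Next, invoking the standard characterisation of adjoints of Banach-space isomorphisms (see, e.g., \cite[Chapter 2]{Brezis}), the adjoint operator $\mathcal{A}^* \colon H^{-\frac{1}{2}}(\partial \Omega) \to H^{-\frac{1}{2}}(\partial \Omega)$ is itself a continuous bijection whose inverse is precisely $(\mathcal{A}^{-1})^*$, and in particular $(\mathcal{A}^*)^{-1}$ is bounded. This immediately yields that for every datum $\sigma_f \in H^{-\frac{1}{2}}(\partial \Omega)$, the operator equation
\[
\mathcal{A}^* \nu = \tfrac{4\pi}{\kappa_0}\, \sigma_f
\]
admits a unique solution $\nu \in H^{-\frac{1}{2}}(\partial \Omega)$ depending continuously on $\sigma_f$. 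Since, by the definition of $\mathcal{A}^*$, this operator equation is exactly the weak formulation \eqref{eq:weak1a}, well-posedness follows.

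There is really no substantial obstacle at this stage: the heavy lifting was done earlier, namely the construction of the splitting via Lemma \ref{lem:decomp}, the continuity estimate of the reduced operator in Lemma \ref{lem:contin2} (which rested on the $N$-independent bound for $c_{\mathcal{V}}$ from Lemmas \ref{lem:Single} and \ref{lem:Hassan2}), and the inf-sup plus dense-range verification in Lemma \ref{lem:inf-sup1}. The only mild care needed here is bookkeeping with the norms $||| \cdot |||$ and $||| \cdot |||^*$, ensuring that the stability constant for $\mathcal{A}^*$ inherited from $\mathcal{A}$ is again explicitly $N$-independent; this is automatic since $\Vert \mathcal{A}^*\Vert_{\mathrm{OP}} = \Vert \mathcal{A}\Vert_{\mathrm{OP}}$ and $\Vert (\mathcal{A}^*)^{-1}\Vert_{\mathrm{OP}} = \Vert \mathcal{A}^{-1}\Vert_{\mathrm{OP}}$, and the latter has already been controlled by $\beta_{\tilde{\mathcal{A}}}^{-1}$ and $C_{\tilde{\mathcal{A}}}$ in Section \ref{sec:4.1}. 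Hence the proof reduces to a short duality statement rather than a new analysis.
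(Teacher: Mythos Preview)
Your proof is correct and follows exactly the paper's approach: deduce from the well-posedness of \eqref{eq:weak1} that $\mathcal{A}$ is a continuous bijection on $H^{\frac{1}{2}}(\partial\Omega)$, then pass to the adjoint to conclude that $\mathcal{A}^*$ is a continuous bijection on $H^{-\frac{1}{2}}(\partial\Omega)$. One minor caveat on your closing aside: Section~\ref{sec:4.1} only furnishes $N$-independent bounds for the \emph{reduced} operator $\tilde{\mathcal{A}}$, not for the full $\mathcal{A}$ (indeed Remark~\ref{rem:Degrade} warns that $C_{\mathcal{A}}$ may grow with $N$), so the claim that $\Vert\mathcal{A}^{-1}\Vert_{\mathrm{OP}}$ is controlled by $\beta_{\tilde{\mathcal{A}}}^{-1}$ and $C_{\tilde{\mathcal{A}}}$ is not quite what was shown---though this does not affect the well-posedness argument itself.
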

			\begin{proof}
				The well-posedness of the infinite-dimensional weak formulation \eqref{eq:weak1} implies that the boundary integral operator $\mathcal{A} \colon H^{\frac{1}{2}}(\partial \Omega) \rightarrow H^{\frac{1}{2}}(\partial \Omega)$ defined through Definition \ref{def:A} is a continuous bijection. Consequently the adjoint operator $\mathcal{A}^* \colon H^{-\frac{1}{2}}(\partial \Omega) \rightarrow H^{-\frac{1}{2}}(\partial \Omega)$ is also a continuous bijection. 
			\end{proof}
			
			A similar result holds for the Galerkin discretisation of the integral equation \eqref{eq:3.3a} for the induced surface~charge.
			
			\begin{theorem}[Finite-Dimensional Well-Posedness]
				The finite-dimensional Galerkin discretisation \eqref{eq:Galerkina} of the weak formulation \eqref{eq:weak1a} is well-posed.
			\end{theorem}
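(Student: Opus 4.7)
The plan is to exploit the finite-dimensional nature of $W^{\ell_{\max}}$ and transfer the well-posedness already proved for the Galerkin discretisation \eqref{eq:Galerkin} of the forward operator $\mathcal{A}$ to its adjoint. The essential observation is that $W^{\ell_{\max}}$ consists of smooth functions, so it embeds continuously in both $H^{\frac{1}{2}}(\partial \Omega)$ and $H^{-\frac{1}{2}}(\partial \Omega)$, and the duality pairing $\langle \cdot,\cdot\rangle_{\partial\Omega}$ coincides with the $L^{2}(\partial\Omega)$ inner product on $W^{\ell_{\max}}\times W^{\ell_{\max}}$. This reduces the whole question to a transparent matrix-transpose argument.

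First, I would fix any basis $\{\phi_i\}_{i=1}^{M}$ of $W^{\ell_{\max}}$ (for instance the translated real spherical harmonics from Definition \ref{def:6.6}) and assemble the two stiffness matrices $[A]_{ij}:=(\phi_i,\mathcal{A}\phi_j)_{L^2(\partial\Omega)}$ and $[B]_{ij}:=(\mathcal{A}^*\phi_j,\phi_i)_{L^2(\partial\Omega)}$ associated respectively with the discretisations \eqref{eq:Galerkin} and \eqref{eq:Galerkina}. Using the defining relation of the adjoint together with the identification of the duality pairing and the $L^{2}$ inner product on $W^{\ell_{\max}}$, I obtain
$[B]_{ij}=(\mathcal{A}^*\phi_j,\phi_i)_{L^2(\partial\Omega)}=\langle\mathcal{A}^*\phi_j,\phi_i\rangle_{\partial\Omega}=\langle\phi_j,\mathcal{A}\phi_i\rangle_{\partial\Omega}=(\phi_j,\mathcal{A}\phi_i)_{L^2(\partial\Omega)}=[A]_{ji}$,
i.e., $B=A^{T}$.

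Second, I would invoke the well-posedness of \eqref{eq:Galerkin}, which is a direct consequence of Lemma \ref{lem:inf-sup2} combined with the upper-triangular structure of the modified Galerkin discretisation \eqref{eq:Galerkin2a}-\eqref{eq:Galerkin2b}. This yields uniqueness and existence of solutions to the square linear system $Ay=b'$ for every right-hand side, and hence the invertibility of $A$. Transposition preserves invertibility in finite dimensions, so $B=A^{T}$ is invertible, and the Galerkin discretisation \eqref{eq:Galerkina} admits a unique solution $\nu_{\ell_{\max}}\in W^{\ell_{\max}}$ for every $\sigma_f\in H^{-\frac{1}{2}}(\partial\Omega)$.

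There is essentially no obstacle in this step: it is a textbook finite-dimensional linear-algebra transfer, entirely analogous to the adjoint argument used just above to deduce infinite-dimensional well-posedness for $\mathcal{A}^*$ from that for $\mathcal{A}$. All the substantive work—namely establishing the $N$-independent discrete inf-sup constant $\beta_{\tilde{\mathcal{A}}}$ on the reduced space $W_0^{\ell_{\max}}$ and handling the constant-function block via \eqref{eq:Galerkin2a}—has already been carried out in Lemma \ref{lem:inf-sup2}.
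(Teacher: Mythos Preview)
Your argument is correct and is essentially the same as the paper's own proof: the paper phrases it in operator language, noting that the well-posedness of \eqref{eq:Galerkin} means $\mathbb{P}_{\ell_{\max}}\mathcal{A}\mathbb{P}_{\ell_{\max}}\colon W^{\ell_{\max}}\to W^{\ell_{\max}}$ is a continuous bijection and hence so is its adjoint $\mathbb{Q}_{\ell_{\max}}\mathcal{A}^*\mathbb{Q}_{\ell_{\max}}$, while you spell out the identical idea via stiffness matrices and transposition. Both rely on the same two inputs, namely the already-established well-posedness of \eqref{eq:Galerkin} and the identification of the duality pairing with the $L^2$ inner product on $W^{\ell_{\max}}$.
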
{
				\begin{proof}
					Let $\mathbb{P}_{{\ell_{\max}}} \colon H^{\frac{1}{2}}(\partial \Omega) \rightarrow W^{\ell_{\max}}$ and $\mathbb{Q}_{{\ell_{\max}}} \colon H^{-\frac{1}{2}}(\partial \Omega) \rightarrow W^{\ell_{\max}}$ denote the projection operators defined through Definition \ref{def:fin_proj}. The well-posedness of the finite-dimensional Galerkin discretisation \eqref{eq:Galerkin} implies that the boundary integral operator $\mathbb{P}_{{\ell_{\max}}} \mathcal{A} \mathbb{P}_{{\ell_{\max}}} \colon W^{\ell_{\max}} \rightarrow W^{\ell_{\max}}$ is a continuous bijection. Consequently, the adjoint operator $\mathbb{Q}_{{\ell_{\max}}} \mathcal{A}^* \mathbb{Q}_{{\ell_{\max}}} \colon W^{\ell_{\max}} \rightarrow W^{\ell_{\max}}$ is also a continuous bijection. 
				\end{proof}
			}
			We conclude this subsection by stating a first approximation result for the  solution $\nu_{\ell_{\max}} \in W^{\ell_{\max}}$ to the Galerkin discretisation \eqref{eq:Galerkina}. 
			{
				\begin{theorem}[First Approximability Result]\label{lem:quasi2}~
					\noindent Let $\ell_{\max} \in \mathbb{N}$, let $\mathbb{Q}_{\ell_{\max}} \colon H^{-\frac{1}{2}}(\partial \Omega) \rightarrow W^{\ell_{\max}}$ denote the projection operator defined through Definition \ref{def:fin_proj}, let $\mathbb{Q}_{\ell_{\max}}^{\perp}:= I -\mathbb{Q}_{\ell_{\max}}$ where $I$ is the identity map on $H^{-\frac{1}{2}}(\partial \Omega)$, let $C_{\tilde{\mathcal{A}}}> 0$ be the continuity constant defined through Equation \eqref{eq:contin} in Lemma \ref{lem:contin2}, let $\beta_{\tilde{\mathcal{A}}}>0$ be the inf-sup constant defined through Equation \eqref{eq:coercive} in Lemma \ref{lem:inf-sup1}, let $\sigma_f \in H^{-\frac{1}{2}}(\partial \Omega)$, let $\nu \in {H}^{-\frac{1}{2}}(\partial \Omega)$ be the unique solution to infinite-dimensional weak formulation \eqref{eq:weak1a} with right hand side given by $\sigma_f$ and let $\nu_{\ell_{\max}} \in W^{\ell_{\max}}$ be the unique solution to the finite-dimensional Galerkin discretisation \eqref{eq:Galerkina} with right hand side given by $\sigma_f$. Then it holds~that
					\begin{align}\label{eq:quasi2}
					||| \nu - \nu_{\ell_{\max}}|||^* &\leq \frac{\max \Big\vert \frac{\kappa_0-\kappa}{\kappa_0}\Big\vert}{\min\Big\vert \frac{\kappa-\kappa_0}{\kappa_0}\Big \vert} \Big(1+\frac{C_{\tilde{\mathcal{A}}}}{\beta_{\tilde{\mathcal{A}}}}\Big) \left( \Big|\Big|\Big|\mathbb{Q}_{\ell_{\max}}^{\perp}\nu \Big|\Big|\Big|^* + \frac{8\pi}{\kappa_0} \Big|\Big|\Big|\mathbb{Q}_{\ell_{\max}}^{\perp}\sigma_f\Big|\Big|\Big|^*\right).
					\end{align}
				\end{theorem}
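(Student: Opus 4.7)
The plan is to combine the discrete inf-sup of Lemma~\ref{lem:inf-sup2} for the reduced bilinear form $\tilde{a}$ with a carefully constructed intermediate approximation $\widehat{\nu}\in W^{\ell_{\max}}$ built from an auxiliary element $\widehat{\lambda}\in W_0^{\ell_{\max}}$ via the Dirichlet-to-Neumann map.

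First I would establish that $\nu - \nu_{\ell_{\max}}$ actually lies in $\breve{H}^{-\frac{1}{2}}(\partial\Omega)$. This follows by testing both the weak formulation~\eqref{eq:weak1a} and the Galerkin equation~\eqref{eq:Galerkina} against $v\in\mathcal{C}(\partial\Omega)\subset W^{\ell_{\max}}$ and using that $\tfrac{\kappa_0-\kappa}{\kappa_0}\text{DtN}\mathcal{V}\mu\in\breve{H}^{-\frac{1}{2}}(\partial\Omega)$ for any $\mu\in H^{-\frac{1}{2}}(\partial\Omega)$: the image of $\text{DtN}$ lies in $\breve{H}^{-\frac{1}{2}}$ by the divergence theorem on each ball, and multiplication by the piecewise-constant factor preserves this space. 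One concludes $\mathbb{Q}_0\nu=\mathbb{Q}_0\nu_{\ell_{\max}}=\tfrac{4\pi}{\kappa_0}\mathbb{Q}_0\sigma_f$, and the standard Galerkin orthogonality $\langle\nu-\nu_{\ell_{\max}},\mathcal{A}\psi_{\ell_{\max}}\rangle_{\partial\Omega}=0$ for $\psi_{\ell_{\max}}\in W^{\ell_{\max}}$ reduces, on the subspace $W_0^{\ell_{\max}}\subset\breve{H}^{\frac{1}{2}}$, to $\tilde{a}(\psi_{\ell_{\max}},\nu-\nu_{\ell_{\max}})=0$.

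For any $\widehat{\nu}\in W^{\ell_{\max}}$ with $\mathbb{Q}_0\widehat{\nu}=\tfrac{4\pi}{\kappa_0}\mathbb{Q}_0\sigma_f$, the difference $\widehat{\nu}-\nu_{\ell_{\max}}$ lies in $W_0^{\ell_{\max}}$. Applying the discrete inf-sup of Lemma~\ref{lem:inf-sup2} in the test direction (a free consequence of the stated inf-sup in finite dimensions, since the trial and test spaces coincide) together with the Galerkin orthogonality above, the continuity bound of Lemma~\ref{lem:contin2}, and a triangle inequality, I obtain the abstract estimate
\begin{align*}
|||\nu-\nu_{\ell_{\max}}|||^* \leq \Big(1+\frac{C_{\tilde{\mathcal{A}}}}{\beta_{\tilde{\mathcal{A}}}}\Big)\,|||\nu-\widehat{\nu}|||^*.
\end{align*}

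The decisive step is the concrete choice of $\widehat{\nu}$. Because $\Omega^-$ is a disjoint union of open balls, the map $\text{DtN}$ acts \emph{locally} on each sphere and is diagonal in the spherical-harmonic basis with eigenvalues $\ell/r_j$; consequently both $\text{DtN}$ and its inverse $\text{NtD}$ preserve $W^{\ell_{\max}}$ and $W_0^{\ell_{\max}}$. Setting $\tilde{\lambda}:=\mathbb{P}_0^{\perp}\mathcal{V}\nu$, the BIE \eqref{eq:3.3a} yields $\nu=\tfrac{4\pi}{\kappa_0}\sigma_f+\tfrac{\kappa_0-\kappa}{\kappa_0}\text{DtN}\tilde{\lambda}$, which motivates the choices
\begin{align*}
\widehat{\lambda} &:= \text{NtD}\!\left(\tfrac{\kappa_0}{\kappa_0-\kappa}\!\left(\mathbb{Q}_{\ell_{\max}}\nu-\tfrac{4\pi}{\kappa_0}\mathbb{Q}_{\ell_{\max}}\sigma_f\right)\right)\in W_0^{\ell_{\max}},\\
\widehat{\nu} &:= \tfrac{4\pi}{\kappa_0}\mathbb{Q}_{\ell_{\max}}\sigma_f+\tfrac{\kappa_0-\kappa}{\kappa_0}\text{DtN}\widehat{\lambda}\in W^{\ell_{\max}}.
\end{align*}
A direct computation then gives $\nu-\widehat{\nu}=\tfrac{\kappa_0-\kappa}{\kappa_0}\text{DtN}(\tilde{\lambda}-\widehat{\lambda})+\tfrac{4\pi}{\kappa_0}\mathbb{Q}_{\ell_{\max}}^{\perp}\sigma_f$, with $\tilde{\lambda}-\widehat{\lambda}=\text{NtD}\!\left(\tfrac{\kappa_0}{\kappa_0-\kappa}(\mathbb{Q}_{\ell_{\max}}^{\perp}\nu-\tfrac{4\pi}{\kappa_0}\mathbb{Q}_{\ell_{\max}}^{\perp}\sigma_f)\right)$, and the stated bound follows by invoking the isometry $|||\text{DtN}\mu|||^*=|||\mu|||$ on $\breve{H}^{\frac{1}{2}}$ (Remark~\ref{rem:isometry}) together with the elementary estimate $|||f\sigma|||^*\leq\max|f|\,|||\sigma|||^*$ for piecewise-constant $f$. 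The main obstacle is precisely this construction: one must identify an intermediate approximation that is simultaneously close to $\nu$ and lies in $W^{\ell_{\max}}$, which is possible only because $\text{DtN}$ preserves the approximation space --- an invariance that $\mathcal{V}$ itself does not share.
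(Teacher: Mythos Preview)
Your argument is correct and genuinely different from the paper's. The paper works on the \emph{potential} side: it introduces the auxiliary function $\psi_{\ell_{\max}}:=\mathcal{V}_{\ell_{\max}}\nu_{\ell_{\max}}$, proves the commutation identity $\mathcal{V}_{\ell_{\max}}\mathcal{A}_{\ell_{\max}}^{*}=\mathcal{A}_{\ell_{\max}}\mathcal{V}_{\ell_{\max}}$ so that $\psi_{\ell_{\max}}$ solves a Galerkin problem for $\mathcal{A}$, then splits $\tilde{\lambda}-\mathbb{P}_0^{\perp}\psi_{\ell_{\max}}$ through the discrete potential $\lambda_{\ell_{\max}}$ and applies the quasi-optimality Lemma~\ref{lem:quasi1} to one piece and the discrete inf-sup to the other. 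You instead stay on the \emph{charge} side throughout: you observe that $\nu-\nu_{\ell_{\max}}\in\breve{H}^{-\frac{1}{2}}(\partial\Omega)$, that Galerkin orthogonality for $\mathcal{A}^{*}$ reduces to $\tilde{a}(\psi_{\ell_{\max}},\nu-\nu_{\ell_{\max}})=0$ on $W_0^{\ell_{\max}}$, and then run a C\'ea-type argument directly in the dual norm using the inf-sup of Lemma~\ref{lem:inf-sup2} in the test direction (which, as you note, coincides with the stated trial-direction constant in finite dimensions). Your route is shorter and avoids both the discrete single-layer operator and the intermediary Lemma~\ref{lem:quasi1}; the paper's route, by contrast, makes the link to the analysis of the potential formulation in Section~\ref{sec:4.1} more transparent. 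Both hinge on the same structural facts---the $\text{DtN}$ isometry on $\breve{H}^{\frac{1}{2}}$ and the invariance of $W_0^{\ell_{\max}}$ under $\text{DtN}$---and produce the same constant.
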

				\begin{proof}
					
					Let $\tilde{\lambda} \in \breve{H}^{\frac{1}{2}}(\partial \Omega)$ be the solution of Equation \eqref{eq:weak2b} in the modified weak formulation.  It is straightforward to show that
					\begin{align}\label{eq:Ben1}
					\nu = \frac{\kappa_0 -\kappa}{\kappa_0} \text{DtN} \tilde{\lambda} + \frac{4 \pi}{\kappa_0} \sigma_f.
					\end{align}
					
					Next, let $\mathcal{A}$ be the integral operator defined through Definition \ref{def:A}, let $\widehat{\lambda}_{\ell_{\max}} \in W^{\ell_{\max}}$ be the solution to the Galerkin discretisation \eqref{eq:Galerkin}, and let $\mathbb{P}_{{\ell_{\max}}} \colon H^{\frac{1}{2}}(\partial \Omega) \rightarrow W^{\ell_{\max}}$ denote the projection operator defined through Definition \ref{def:fin_proj}. We then define the mappings
					\begin{align*}
					\mathcal{V}_{\ell_{\max}}&:=\mathbb{P}_{{\ell_{\max}}} \mathcal{V}\mathbb{Q}_{{\ell_{\max}}}, \qquad
					\mathcal{A}_{\ell_{\max}}:=\mathbb{P}_{{\ell_{\max}}} \mathcal{A}\mathbb{P}_{{\ell_{\max}}}, \quad \text{ and } \quad \mathcal{A}^*_{\ell_{\max}}:= \mathbb{Q}_{{\ell_{\max}}} \mathcal{A}^*\mathbb{Q}_{{\ell_{\max}}},
					\end{align*}
					
					and we define the function $\psi_{\ell_{\max}}:= \mathcal{V}_{\ell_{\max}}\nu_{\ell_{\max}} \in W^{\ell_{\max}}$. We first claim that $\psi_{\ell_{\max}}$ satisfies the equation
					\begin{align}\label{eq:star}
					\mathcal{A}_{\ell_{\max}} \psi_{\ell_{\max}}=\frac{4\pi}{\kappa_0} \mathcal{V}_{\ell_{\max}}\sigma_f.
					\end{align}

					Indeed, since $\nu_{\ell_{\max}}$ satisfies the Galerkin discretisation \eqref{eq:Galerkina} we obviously have
					\begin{align*}
					\mathcal{A}^*_{\ell_{\max}}\nu_{\ell_{\max}}= \frac{4\pi}{\kappa_0}\mathbb{Q}_{\ell_{\max}}\sigma_f \qquad \text{which implies that} \qquad 
					\mathcal{V}_{\ell_{\max}}\mathcal{A}_{\ell_{\max}}^* \nu_{\ell_{\max}}= \frac{4\pi}{\kappa_0} \mathcal{V}_{\ell_{\max}}\sigma_f.
					\end{align*}
					
					Using the fact that $\mathcal{V}_{\ell_{\max}}\mathcal{A}_{\ell_{\max}}^*= \mathcal{A}_{\ell_{\max}} \mathcal{V}_{\ell_{\max}}$ yields 
					\begin{align*}
					\mathcal{A}_{\ell_{\max}} \psi_{\ell_{\max}}=\mathcal{A}_{\ell_{\max}}\mathcal{V}_{\ell_{\max}} \nu_{\ell_{\max}}=\mathcal{V}_{\ell_{\max}}\mathcal{A}_{\ell_{\max}}^* \nu_{\ell_{\max}}=\frac{4\pi}{\kappa_0} \mathcal{V}_{\ell_{\max}}\sigma_f,
					\end{align*}
					which gives the intermediary result.
					
					We now consider again the Galerkin discretisation \eqref{eq:Galerkina}. Using the definition of $\psi_{\ell_{\max}}$ and the fact that $\mathbb{Q}_{\ell_{\max}}\text{DtN}=\text{DtN}\mathbb{P}_{\ell_{\max}}$ we obtain that
					\begin{align}\label{eq:Ben2}
					\nu_{\ell_{\max}}= \frac{\kappa_0 -\kappa}{\kappa_0} \mathbb{Q}_{\ell_{\max}}\text{DtN} \mathcal{V}\nu_{\ell_{\max}} + \frac{4 \pi}{\kappa_0}  \mathbb{Q}_{\ell_{\max}}\sigma_f=  \frac{\kappa_0 -\kappa}{\kappa_0} \text{DtN}\psi_{\ell_{\max}}+ \frac{4 \pi}{\kappa_0}  \mathbb{Q}_{\ell_{\max}}\sigma_f.
					\end{align}

					Let $\mathbb{P}_0^{\perp} \colon H^{\frac{1}{2}}(\partial \Omega) \rightarrow \breve{H}^{\frac{1}{2}}(\partial \Omega)$ be the projection operator defined through Lemma \ref{lem:decomp}. Subtracting Equation \eqref{eq:Ben2} from Equation \eqref{eq:Ben1} then gives
					\begin{align}\nonumber
					||| \nu - \nu_{\ell_{\max}}|||^* &= \Big|\Big|\Big| \frac{\kappa_0-\kappa}{\kappa_0} \text{DtN} \tilde{\lambda}- \frac{\kappa_0-\kappa}{\kappa_0} \text{DtN} {\psi}_{\ell_{\max}} + \frac{4\pi}{\kappa_0} (\sigma_f- \mathbb{Q}_{\ell_{\max}}\sigma_f) \Big|\Big|\Big|^*\\ \nonumber
					&\leq \Big|\Big|\Big| \frac{\kappa_0-\kappa}{\kappa_0} \text{DtN} \tilde{\lambda}- \frac{\kappa_0-\kappa}{\kappa_0} \text{DtN} {\psi}_{\ell_{\max}}\Big|\Big|\Big|^* + \frac{4\pi}{\kappa_0}\big|\big|\big|\mathbb{Q}_{\ell_{\max}}^{\perp}\sigma_f \big|\big|\big|^*\\ \nonumber
					&= \Big|\Big|\Big|\frac{\kappa_0-\kappa}{\kappa_0} \Big(\tilde{\lambda} - \mathbb{P}_0^{\perp}{\psi}_{\ell_{\max}}\Big) \Big|\Big|\Big|+\frac{4\pi}{\kappa_0}\big|\big|\big|\mathbb{Q}_{\ell_{\max}}^{\perp}\sigma_f \big|\big|\big|^*\\
					&\leq \max \Big\vert \frac{\kappa_0-\kappa}{\kappa_0}\Big\vert ||| \tilde{\lambda} - \mathbb{P}_0^{\perp}{\psi}_{\ell_{\max}}||| + \frac{4\pi}{\kappa_0} \big|\big|\big|\mathbb{Q}_{\ell_{\max}}^{\perp}\sigma_f \big|\big|\big|^*. \label{eq:NewNew}
					\end{align}
					
					Let $\lambda_{\ell_{\max}} \in W_0^{\ell_{\max}}$ denote the solution to Equation \eqref{eq:Galerkin2b} of the modified Galerkin discretisation. The first term in the bound \eqref{eq:NewNew} can then be written as
					\begin{align}\label{eq:NewNew2}
					||| \tilde{\lambda} - \mathbb{P}_0^{\perp}{\psi}_{\ell_{\max}}||| \leq ||| \lambda_{\ell_{\max}} -\mathbb{P}_0^{\perp}\psi_{\ell_{\max}}|||+ ||| \tilde{\lambda} -\lambda_{\ell_{\max}} |||.
					\end{align}
					
					The first term in Inequality \eqref{eq:NewNew2} can be simplified as follows: We first define the mapping $\widetilde{\mathcal{A}}_{\ell_{\max}} \colon W_0^{\ell_{\max}} \rightarrow W_0^{\ell_{\max}}$ as $\widetilde{\mathcal{A}}_{\ell_{\max}} := \mathbb{P}_0^{\perp} \mathcal{A}_{\ell_{\max}}\mathbb{P}_0^{\perp}$. Thus, $\widetilde{\mathcal{A}}_{\ell_{\max}}$ is the operator associated with the Galerkin discretisation of the ``reduced'' bilinear form defined through Definition \ref{def:Atilde}. We therefore obtain from Lemma \ref{lem:inf-sup2} that 
					\begin{align*}
					||| \lambda_{\ell_{\max}} -\mathbb{P}_0^{\perp}\psi_{\ell_{\max}}||| &= \big|\big|\big| \widetilde{\mathcal{A}}_{\ell_{\max}}^{-1} \widetilde{\mathcal{A}}_{\ell_{\max}}\big(\lambda_{\ell_{\max}} -\mathbb{P}_0^{\perp}\psi_{\ell_{\max}}\big)\big|\big|\big| \leq \frac{1}{\beta_{\tilde{\mathcal{A}}}}\big|\big|\big| \widetilde{\mathcal{A}}_{\ell_{\max}}\big(\lambda_{\ell_{\max}} -\mathbb{P}_0^{\perp}\psi_{\ell_{\max}}\big)\big|\big|\big|.
					\end{align*}
					
					In order to simplify this last bound, we first use Equation \eqref{eq:star}, the definitions of the operators $\widetilde{\mathcal{A}}_{\ell_{\max}}$ and ${\mathcal{A}}_{\ell_{\max}}$ together with a simple calculation to deduce that
					\begin{align*}
					\widetilde{\mathcal{A}}_{\ell_{\max}}\mathbb{P}_0^{\perp}\psi_{\ell_{\max}} = \mathbb{P}_0^{\perp} \mathcal{A}_{\ell_{\max}}\mathbb{P}_0^{\perp}\psi_{\ell_{\max}} = \frac{4\pi}{\kappa_0}\mathbb{P}_0^{\perp} \mathcal{V}_{\ell_{\max}}\sigma_f=\frac{4\pi}{\kappa_0}\mathbb{P}_0^{\perp} \mathbb{P}_{\ell_{\max}} \mathcal{V}\mathbb{Q}_{\ell_{\max}}\sigma_f.
					\end{align*}
					 \indent A similar calculation using the definition of $\lambda_{\ell_{\max}}$ (see Equation \eqref{eq:Galerkin2b}) yields
					\begin{align*}
					\widetilde{\mathcal{A}}_{\ell_{\max}}\lambda_{\ell_{\max}} = \frac{4\pi}{\kappa_0}\mathbb{P}_0^{\perp} \mathbb{P}_{\ell_{\max}} \mathcal{V}\sigma_f.
					\end{align*}
					
					We can therefore deduce that
					\begin{align*}
					||| \lambda_{\ell_{\max}} -\mathbb{P}_0^{\perp}\psi_{\ell_{\max}}||| \leq \frac{1}{\beta_{\tilde{\mathcal{A}}}}\big|\big|\big| \widetilde{\mathcal{A}}\big(\lambda_{\ell_{\max}} -\mathbb{P}_0^{\perp}\psi_{\ell_{\max}}\big)\big|\big|\big|\leq \frac{4\pi}{\kappa_0\beta_{\tilde{\mathcal{A}}}}\Big|\Big|\Big|\mathbb{P}_0^{\perp} \mathbb{P}_{\ell_{\max}}\mathcal{V}\mathbb{Q}_{\ell_{\max}}^{\perp}\sigma_f\big)\Big|\Big|\Big|.
					\end{align*}
					
					Since $\text{DtN} \colon \breve{H}^{\frac{1}{2}}(\partial \Omega)\rightarrow \breve{H}^{-\frac{1}{2}}(\partial \Omega)$ is an isomorphism and thus invertible, we can define $\Phi_{\ell_{\max}}:=\text{DtN}^{-1}\mathbb{Q}_{\ell_{\max}}^{\perp}\sigma_f$. We then obtain
					\begin{align*}
					\frac{4\pi}{\kappa_0\beta_{\tilde{\mathcal{A}}}}\Big|\Big|\Big|\mathbb{P}_0^{\perp} \mathbb{P}_{\ell_{\max}}\mathcal{V}\mathbb{Q}_{\ell_{\max}}^{\perp}\sigma_f\big)\Big|\Big|\Big| &=\frac{4\pi}{\kappa_0\beta_{\tilde{\mathcal{A}}}}\Big|\Big|\Big| \mathbb{P}_{\ell_{\max}}\mathbb{P}_0^{\perp}\mathcal{V}\mathbb{Q}_{\ell_{\max}}^{\perp}\sigma_f\big)\Big|\Big|\Big| \leq \frac{4\pi}{\kappa_0\beta_{\tilde{\mathcal{A}}}}\Big|\Big|\Big|\mathbb{P}_0^{\perp} \mathcal{V}\text{DtN}\Phi_{\ell_{\max}}\Big|\Big|\Big|\\
					&\leq \frac{4\pi}{\kappa_0\beta_{\tilde{\mathcal{A}}}} \frac{c^{\frac{3}{2}}_{\mathcal{K}} c_{\rm equiv}}{\sqrt{c_{\mathcal{V}}}}||| \Phi_{\ell_{\max}}||| = \frac{4\pi}{\kappa_0\beta_{\tilde{\mathcal{A}}}} \frac{c^{\frac{3}{2}}_{\mathcal{K}} c_{\rm equiv}}{\sqrt{c_{\mathcal{V}}}}||| \mathbb{Q}_{\ell_{\max}}^{\perp}\sigma_f|||^*,
					\end{align*}
					where the first step in the second line follows from the arguments used in the proof of Lemma \ref{lem:contin}.
					
					In order to simplify the second term in the Inequality \eqref{eq:NewNew2} we use the quasi-optimality result Lemma \ref{lem:quasi1}:
					\begin{align*}
					||| \tilde{\lambda} -\lambda_{\ell_{\max}} ||| &\leq  \Big(1+\frac{C_{\tilde{\mathcal{A}}}}{\beta_{\tilde{\mathcal{A}}}}\Big) \inf_{\psi \in W_0^{\ell_{\max}}}||| \tilde{\lambda} - \psi|||.
					\end{align*}
					
					Using again the fact that $\text{DtN} \colon \breve{H}^{\frac{1}{2}}(\partial \Omega)\rightarrow \breve{H}^{-\frac{1}{2}}(\partial \Omega)$ is invertible, we deduce from Equation \eqref{eq:Ben1} that
					\begin{align*}
					\tilde{\lambda}= \frac{\kappa_0}{\kappa_0-\kappa}\text{DtN}^{-1} \mathbb{Q}_0^{\perp}\nu -\frac{\kappa_0}{\kappa_0-\kappa}\frac{4\pi}{\kappa_0}\text{DtN}^{-1}\mathbb{Q}_0^{\perp}\sigma_f.
					\end{align*}
					
					Since the Dirichlet-to-Neumann mapping is bijective on $W^{\ell_{\max}}_0$, we can therefore write
					\begin{align*}
					\inf_{\psi \in W_0^{\ell_{\max}}}||| \tilde{\lambda} - \psi||| &= \inf_{ \text{DtN}^{-1}\sigma_0 \in W_0^{\ell_{\max}}}\Big|\Big|\Big| \frac{\kappa_0}{\kappa_0-\kappa}\text{DtN}^{-1} \mathbb{Q}_0^{\perp}\Big(\nu-\frac{4\pi}{\kappa_0}\sigma_f\Big) - \text{DtN}^{-1}\sigma_0\Big|\Big|\Big|\\
					&=\inf_{\sigma_0 \in W_0^{\ell_{\max}}}\Big|\Big|\Big| \frac{\kappa_0}{\kappa_0-\kappa}\text{DtN}^{-1}\Big(\mathbb{Q}_0^{\perp}\Big(\nu-\frac{4\pi}{\kappa_0}\sigma_f\Big) - \sigma_0\Big)\Big|\Big|\Big|\\
					&\leq \frac{1}{\min \Big\vert \frac{\kappa-\kappa_0}{\kappa_0}\Big \vert}\inf_{\sigma_0 \in W_0^{\ell_{\max}}} \Big|\Big|\Big|\mathbb{Q}_0^{\perp}\Big(\nu -\frac{4\pi}{\kappa_0}\sigma_f\Big) -\sigma_0 \Big|\Big|\Big|^*.
					\end{align*}
					
					In the above infimum, we may pick $\sigma_0=\mathbb{Q}_0^{\perp}\mathbb{Q}_{\ell_{\max}}\Big(\nu -\frac{4\pi}{\kappa_0}\sigma_f\Big)$ and use the triangle inequality to obtain
					\begin{align*}
					\inf_{\psi \in W_0^{\ell_{\max}}}||| \tilde{\lambda} - \psi||| \leq \frac{1}{\min\Big\vert \frac{\kappa-\kappa_0}{\kappa_0}\Big \vert} \Big|\Big|\Big|\mathbb{Q}_{\ell_{\max}}^{\perp}\nu \Big|\Big|\Big|^* + \frac{1}{\min\Big\vert \frac{\kappa-\kappa_0}{\kappa_0}\Big \vert}\frac{4\pi}{\kappa_0} \Big|\Big|\Big|\mathbb{Q}_{\ell_{\max}}^{\perp}\sigma_f \Big|\Big|\Big|^*.
					\end{align*}
					
					Using the above calculations, we can finally bound the original Inequality \eqref{eq:NewNew} as 
					\begin{align*}
					||| \nu - \nu_{\ell_{\max}}|||^* &\leq \frac{\max \Big\vert \frac{\kappa_0-\kappa}{\kappa_0}\Big\vert}{\min\Big\vert \frac{\kappa-\kappa_0}{\kappa_0}\Big \vert} \Big(1+\frac{C_{\tilde{\mathcal{A}}}}{\beta_{\tilde{\mathcal{A}}}}\Big) \left( \Big|\Big|\Big|\mathbb{Q}_{\ell_{\max}}^{\perp}\nu \Big|\Big|\Big|^* + \frac{4\pi}{\kappa_0} \Big|\Big|\Big|\mathbb{Q}_{\ell_{\max}}^{\perp}\sigma_f\Big|\Big|\Big|^*\right)\\
					&+ \frac{4\pi}{\kappa_0} \frac{1}{\beta_{\tilde{\mathcal{A}}}} \max \Big\vert \frac{\kappa_0-\kappa}{\kappa_0}\Big\vert \frac{c^{\frac{3}{2}}_{\mathcal{K}} c_{\rm equiv}}{\sqrt{c_{\mathcal{V}}}}  \big|\big|\big|\mathbb{Q}_{\ell_{\max}}^{\perp}\sigma_f \big|\big|\big|^*+ \frac{4\pi}{\kappa_0} \big|\big|\big|\mathbb{Q}_{\ell_{\max}}^{\perp}\sigma_f \big|\big|\big|^*.
					\end{align*}
					
					Using the fact that $\max \Big\vert \frac{\kappa_0-\kappa}{\kappa_0}\Big\vert \frac{c^{\frac{3}{2}}_{\mathcal{K}} c_{\rm equiv}}{\sqrt{c_{\mathcal{V}}}}  \leq C_{\widetilde{\mathcal{A}}}$, we therefore obtain
					\begin{align*}
					||| \nu - \nu_{\ell_{\max}}|||^* &\leq \frac{\max \Big\vert \frac{\kappa_0-\kappa}{\kappa_0}\Big\vert}{\min\Big\vert \frac{\kappa-\kappa_0}{\kappa_0}\Big \vert} \Big(1+\frac{C_{\tilde{\mathcal{A}}}}{\beta_{\tilde{\mathcal{A}}}}\Big) \left( \Big|\Big|\Big|\mathbb{Q}_{\ell_{\max}}^{\perp}\nu \Big|\Big|\Big|^* + \frac{4\pi}{\kappa_0} \Big|\Big|\Big|\mathbb{Q}_{\ell_{\max}}^{\perp}\sigma_f\Big|\Big|\Big|^*\right)+\Big(1+\frac{C_{\widetilde{\mathcal{A}}}}{\beta_{\tilde{\mathcal{A}}}}\Big)\frac{4\pi}{\kappa_0} \big|\big|\big|\mathbb{Q}_{\ell_{\max}}^{\perp}\sigma_f \big|\big|\big|^*\\
					&\leq \frac{\max \Big\vert \frac{\kappa_0-\kappa}{\kappa_0}\Big\vert}{\min\Big\vert \frac{\kappa-\kappa_0}{\kappa_0}\Big \vert} \Big(1+\frac{C_{\tilde{\mathcal{A}}}}{\beta_{\tilde{\mathcal{A}}}}\Big) \left( \Big|\Big|\Big|\mathbb{Q}_{\ell_{\max}}^{\perp}\nu \Big|\Big|\Big|^* + \frac{8\pi}{\kappa_0} \Big|\Big|\Big|\mathbb{Q}_{\ell_{\max}}^{\perp}\sigma_f\Big|\Big|\Big|^*\right),
					\end{align*}
					as claimed.				
				\end{proof}
			} \vspace{2mm}
			
			\subsection{Proofs of the Main Results}~
			
			We begin with the proof of Theorem \ref{thm:1}, which involves a priori error estimates and convergence rates. \vspace{3mm}
			
			{\begin{proof}[\large{\textbf{Proof of Theorem \ref{thm:1}:}}]~
					
					Consider the setting of Theorem \ref{thm:1}. We first observe that for all $s \geq 0$, $\sigma_f \in H^s(\partial \Omega)$ implies that $\nu \in H^s(\partial \Omega)$ (see, e.g., \cite[Section 9.1.4]{2ndkind1}). \vspace{2mm}
					
					Next, let $j \in \{1, \ldots, N\}$ and let $\nu_j,~ \sigma_{f, j} \in {H}^{s}(\partial \Omega_j)$ be defined as $\nu_j:=\nu\vert_{\partial \Omega_j}$ and $\sigma_{f,j}:=\sigma_f\vert_{\partial \Omega_j}$. It follows that there exist coefficients $[\nu_j]_\ell^m, [\sigma_{f,j}]_{\ell}^m$, ~ $\ell \in \mathbb{N}_0, ~-\ell \leq m \leq +\ell$ such that for all $\bold{x} \in \partial \Omega_j$ it holds that
					\begin{align*}
					\nu_j (\bold{x}) = \sum_{\ell=0}^\infty \sum_{m=-\ell}^{m=+\ell} [\nu_j]_\ell^m \mathcal{Y}_\ell^m\left(\frac{\bold{x}-\bold{x}_j}{\vert \bold{x}-\bold{x}_j\vert}\right),\qquad \text{and} \qquad
					\sigma_{f, j} (\bold{x}) = \sum_{\ell=0}^\infty \sum_{m=-\ell}^{m=+\ell} [\sigma_{f, j}]_\ell^m \mathcal{Y}_\ell^m\left(\frac{\bold{x}-\bold{x}_j}{\vert \bold{x}-\bold{x}_j\vert}\right).
					\end{align*}
					
					Using Definition \ref{def:NewNorm} of the $||| \cdot |||^*$ norm and Definition \ref{def:fin_proj} of the projection operator $\mathbb{Q}_{\ell_{\max}}^{\perp}$ we obtain that
					\begin{align*}
					{\Big|\Big|\Big|\mathbb{Q}_{\ell_{\max}}^{\perp}\nu\Big|\Big|\Big|^*}^2 &\leq \sum_{j=1}^N r_j^2\sum_{\ell=\ell_{\max}+1}^\infty \sum_{m=-\ell}^{m=+\ell} \left(\frac{\ell}{r_j}\right)^{-1}\left([\nu_j]_\ell^m\right)^2,\\
					\intertext{and}
					{\big|\big|\big|\mathbb{Q}_{\ell_{\max}}^{\perp}\sigma_f \big|\big|\big|^*}^2 &\leq \sum_{j=1}^N r_j^2\sum_{\ell=\ell_{\max}+1}^\infty \sum_{m=-\ell}^{m=+\ell} \left(\frac{\ell}{r_j}\right)^{-1}\left([\sigma_{f, j}]_\ell^m\right)^2.
					\end{align*}
					
					Using the definition of the $||| \cdot |||_{{H}^s(\partial \Omega)}$ from Equation \eqref{eq:frac20} and standard arguments from the error analysis of spectral methods then yields that
					\begin{align*}
					{||| \nu-\nu_{\ell_{\max}}|||^*} \leq \frac{\max \Big\vert \frac{\kappa-\kappa_0}{\kappa_0}\Big\vert}{\min \Big\vert \frac{\kappa-\kappa_0}{\kappa_0}\Big\vert}\left(1 + \frac{C_{\tilde{\mathcal{A}}}}{\beta_{\tilde{\mathcal{A}}}}\right)\left(\frac{\max r_j}{\ell_{\max}+1}\right)^{s+\frac{1}{2}} \left(\Big|\Big|\Big| \mathbb{Q}_{0}^{\perp}\nu\Big|\Big|\Big|_{{H}^{s}(\partial \Omega)} + \frac{8\pi}{\kappa_0} \Big|\Big|\Big| \mathbb{Q}_{0}^{\perp}\sigma_f\Big|\Big|\Big|_{H^s(\partial \Omega)}\right).
					\end{align*}

					The convergence rates for the total electrostatic energy follow by observing that the Cauchy-Schwarz inequality yields
					\begin{align*}
					\vert \mathcal{E}_{\sigma_f}(\nu) -\mathcal{E}_{\sigma_f}(\nu_{\ell_{\max}}) \vert= \left \langle \nu - \nu_{\ell_{\max}}, \mathcal{V}\sigma_f\right\rangle_{\partial \Omega}
					\leq ||| \nu - \nu_{\ell_{\max}} |||^* |||\mathcal{V}\sigma_f |||.
					\end{align*}
			\end{proof}}

			{\begin{proof}[\large{\textbf{Proof of Theorem \ref{thm:2}:}}]~
					
					Consider the setting of Theorem \ref{thm:2}. We first observe that since $\sigma_f \in C^{\infty}(\partial \Omega)$, the regularity theory for boundary integral equations (see, e.g., \cite[Section 9.1.4]{2ndkind1}) implies that $\nu \in C^{\infty}(\partial \Omega)$. Next, let us focus on obtaining an expression for the norm of the induced surface charge $\nu$. To this end, let $j \in \{1, \ldots, N\}$ and let $\nu_j \in C^{\infty}(\partial \Omega_j)$ be defined as $\nu_j:=\nu\vert_{\partial \Omega_j}$. It follows that there exist coefficients $[\nu_j]_{\ell}^m, ~ \ell \in \mathbb{N}_0, -\ell \leq m \leq \ell$ such that for all $x \in {\partial \Omega_j}$ it holds that
					\begin{align*}
					\nu_j(\bold{x})= \sum_{\ell=0}^{\infty}\sum_{m=-\ell}^{m=+\ell} [\nu_j]_{\ell}^m \mathcal{Y}_{\ell}^m \Big(\frac{\bold{x}-\bold{x}_j}{\vert \bold{x}-\bold{x}_j \vert}\Big).
					\end{align*}
					
					Let $\mathcal{E}_{\mathcal{H}}\nu_j \in C^{\infty}(\overline{\Omega_j})$ be the harmonic extension of $\nu_j$ inside the ball $\Omega_j$. Then for all $x \in \overline{\Omega_j}$ it holds that
					\begin{align}\label{eq:added1}
					\big(\mathcal{E}_{\mathcal{H}}\nu_j\big)(\bold{x})= \sum_{\ell=0}^{\infty}\sum_{m=-\ell}^{m=+\ell} [\nu_j]_{\ell}^m \Big(\frac{\vert \bold{x}-\bold{x}_j\vert}{r_j}\Big)^{\ell}\mathcal{Y}_{\ell}^m \Big(\frac{\bold{x}-\bold{x}_j}{\vert \bold{x}-\bold{x}_j \vert}\Big).
					\end{align}
					
					Using Equation \eqref{eq:added1}, it is straightforward to verify that for all integers $k \in \mathbb{N}_0$ it holds that
					\begin{align} \label{eq:thm21}
					||| \mathbb{Q}_0^{\perp}\nu_j |||^2_{{H}^{k}(\partial \Omega_j)}&= \int_{\partial \Omega_j}  \big(\mathcal{E}_{\mathcal{H}}\nu_j\big)(\bold{x}) \frac{\partial^{2k} \big(\mathcal{E}_{\mathcal{H}}\nu_j\big)(\bold{x})}{\partial \eta^{2k}}\, d\bold{x},
					\end{align}
					where $\mathbb{Q}^{\perp}_0\colon H^{-\frac{1}{2}}(\partial \Omega) \rightarrow \breve{H}^{-\frac{1}{2}}(\partial \Omega)$ is the projection operator defined through Lemma \ref{lem:decomp} and $\eta \colon \partial \Omega_j \rightarrow \mathbb{R}^3$ is the unit outward-pointing normal vector. On the other hand, we have by assumption that $\mathcal{E}_{\mathcal{H}}\nu_j$ is analytic on $\overline{\Omega_j}$. Therefore, there exists some constant $C_{\nu_j} >1$ that depends on the function $\nu_j$ such that for all $k \in \mathbb{N}_0$  and $\bold{x} \in \partial \Omega_j$ it holds that
					\begin{align*}
					\Big \vert \frac{\partial^k \big(\mathcal{E}_{\mathcal{H}}\nu_j\big)(\bold{x})}{\partial \eta^k}\Big \vert \leq C^{k+1}_{\nu_j} k!.
					\end{align*}
					
					Defining the constant $C_{\nu}:= \max_{j} C_{\nu_j}$, we therefore obtain from Equation \eqref{eq:thm21} that
					\begin{align*}
					||| \mathbb{Q}_0^{\perp}\nu_j |||^2_{{H}^{k}(\partial \Omega_j)}\leq 4\pi r^2_j C^{2k+2}_{\nu_j} (2k)!, ~~\text{so that}~~
					\frac{1}{N}||| \mathbb{Q}_0^{\perp}\nu |||^2_{{H}^{k}(\partial \Omega)}\leq 4\pi \max_j r^2_jC^{2k+2}_{\nu} (2k)!,
					\end{align*}
					
					A similar calculation which uses the fact that the harmonic extension of $\sigma_f$ is analytic on $\overline{\Omega^-}$ yields that there exist some constant $C_{\sigma_f}$ depending on $\sigma_f$ such that
					\begin{align*}
					\frac{1}{N}||| \mathbb{Q}_{0}^{\perp}\sigma_f|||^2_{{H}^{k}(\partial \Omega)}\leq 4\pi \max_j r^2_jC^{2k+2}_{\sigma_f} (2k)!.
					\end{align*}
					
					The remainder of the proof is standard. Indeed, we define $C_{\nu, \sigma_f}:= \max\left\{ C_{\nu}, \left(\frac{8\pi}{\kappa_0}\right)^{\frac{1}{2k+2}}C_{\sigma_f}\right\}$ and we use the error estimate from Theorem \ref{thm:1} to obtain 
\begin{align*}
					\frac{1}{N}{||| \nu-\nu_{\ell_{\max}}|||^*}^2 &\leq 8\pi \max_j r^2_j\frac{\max \Big\vert \frac{\kappa-\kappa_0}{\kappa_0}\Big\vert^2}{\min \Big\vert \frac{\kappa-\kappa_0}{\kappa_0}\Big\vert^2}\left(1 + \frac{C_{\tilde{\mathcal{A}}}}{\beta_{\tilde{\mathcal{A}}}}\right)^2 \left(C_{\nu}^{2k+2}(2k)!+ \frac{8\pi}{\kappa_0} C_{\sigma_f}^{2k+2}(2k)!\right) \left(\frac{\max r_j}{\ell_{\max}+1}\right)^{1+2k}\\
						&\leq 8\pi \max_j r^2_j\frac{\max \Big\vert \frac{\kappa-\kappa_0}{\kappa_0}\Big\vert^2}{\min \Big\vert \frac{\kappa-\kappa_0}{\kappa_0}\Big\vert^2}\left(1 + \frac{C_{\tilde{\mathcal{A}}}}{\beta_{\tilde{\mathcal{A}}}}\right)^2 \left(\frac{\max r_j}{\ell_{\max}+1}\right)^{1+2k}C_{\nu, \sigma_f}^{2k+2}(2k)!.
					\end{align*}

					
					Stirling's formula then yields that
					\begin{align*}
					\left(\frac{\max r_j}{\ell_{\max}+1}\right)^{1+2k} C^{2k+2}_{\nu, \sigma_f}(2k)! \leq \left(\frac{\max r_j}{\ell_{\max}+1}\right)^{1+2k} C^{2k+2}_{\nu, \sigma_f}e^{-2k+1}(2k)^{2k+\frac{1}{2}}.
					\end{align*}
					
					In particular, for $\ell_{\max}$ sufficiently large, we can choose $\alpha \in \big[\frac{1}{4C_{\nu, \sigma_f}}, \frac{1}{2C_{\nu, \sigma_f}}\big]$ such that $k= \alpha\frac{\ell_{\max}+1}{\max r_j} \in \mathbb{N}$. We then see that
					
					\begin{align*}
					\left(\frac{\max r_j}{\ell_{\max}+1}\right)^{1+2k} C^{2k+2}_{\nu, \sigma_f}e^{-2k+1}(2k)^{2k+\frac{1}{2}}&=\Big(\frac{\alpha}{k}\Big)^{1+2k} C^{2k+2}_{\nu, \sigma_f}e^{-2k+1}(2k)^{2k+\frac{1}{2}}\\ 
					&=\alpha^{1+2k} C^{2k+2}_{\nu, \sigma_f}e^{-2k+1}2^{2k+\frac{1}{2}}k^{-\frac{1}{2}}\\ 
					&= \frac{\alpha C^2_{\nu, \sigma_f}e\sqrt{2}}{\sqrt{k}} \left({4\alpha^2 C^2_{\nu, \sigma_f}}\frac{1}{e^2}\right)^k\\
					&\leq \frac{\sqrt{\alpha}}{\sqrt{\frac{\ell_{\max}+1}{\max r_j}}}{C^2_{\nu, \sigma_f}\sqrt{2}} e^{-2k+1}\\
					&\leq \sqrt{2\max r_j}C_{\nu, \sigma_f}^2\exp\left(-2\alpha\frac{\ell_{\max}+1}{\max r_j} +1\right)\\
					&\leq \sqrt{2\max r_j}C_{\nu, \sigma_f}^2\exp\left(-\frac{1}{2C_{\nu, \sigma_f}}\frac{\ell_{\max}+1}{\max r_j} +1\right).
					\end{align*}
					
					We conclude that 
					\begin{align*}
					\frac{1}{\sqrt{N}}{||| \nu-\nu_{\ell_{\max}}|||^*} \leq \sqrt{8 \pi \max r^2_j}(2\max r_j)^{\frac{1}{4}}C_{\nu, \sigma_f}\frac{\max \Big\vert \frac{\kappa-\kappa_0}{\kappa_0}\Big\vert}{\min \Big\vert \frac{\kappa-\kappa_0}{\kappa_0}\Big\vert}\left(1 + \frac{C_{\tilde{\mathcal{A}}}}{\beta_{\tilde{\mathcal{A}}}}\right)\exp\left(-\frac{1}{4C_{\nu, \sigma_f}}\frac{\ell_{\max}+1}{\max r_j} +\frac{1}{2}\right).
					\end{align*}
					
					This completes the proof for the exponential convergence of the approximate induced surface charge. The proof for the exponential convergence of the approximate total electrostatic energy is essentially identical.
					
			\end{proof}}

			\section{Conclusion and Future Work}
			
			In this work, we presented a detailed numerical analysis of an integral equation formulation of the second kind for the induced surface charges resulting on a large number of dielectric spheres of varying radii and dielectric constants, embedded in a homogenous dielectric medium and undergoing mutual polarisation. We derived a priori error estimates and convergence rates that \emph{do not have any explicit dependence on the number of dielectric spheres $N$ in the system}. In order to achieve this, we introduced a new analysis of second kind boundary integral equations posed on spherical domains.
			
			In order to complete a scalability analysis of the numerical algorithm under consideration, it is also necessary to analyse computational aspects of the algorithm such as the conditioning of the linear system that arises from the Galerkin discretisation \eqref{eq:Galerkina}. This topic, as well as related computational considerations, is the subject of the contribution \cite{Hassan2}.
			
			From the point of view of further numerical analysis, we emphasise that the differential operator which generated all layer potentials and boundary operators in the current work was the Laplace operator. Future theoretical work could therefore involve the analysis of $N$-body systems involving more complicated differential operators. Such operators arise, for instance, in the study of wave propagation in non-homogenous media or electrostatic interactions between dielectric spheres in an ionic solvent.

			\bibliographystyle{plain}
			\bibliography{refs.bib}
			
			\newpage
			\begin{appendix}

				\section{Justification for the Equivalence of the $||| \cdot |||$ Norm} \label{sec:Appendix_A}
				
				\textbf{Notation:} We write $\breve{\mathbb{H}}:=\{u \in \mathbb{H}(\Omega^-) \colon \gamma^- u \in \breve{H}^{\frac{1}{2}}(\partial \Omega)\}.$\\
				
				Intuitively, $\breve{\mathbb{H}}$ consists of harmonic functions in $H^1(\Omega^-)$ such that the interior Dirichlet trace of these functions is of average zero. Consequently, it holds that $\breve{\mathbb{H}}$ is a Hilbert space with respect to the $H^1$ semi-norm. Henceforth, we will equip the space $\breve{\mathbb{H}}$ with the inner product given by
				\begin{align*}
				(u, v)_{\breve{\mathbb{H}}}:= \sum_{i=1}^N\int_{\Omega_i} \nabla u(\bold{x}) \cdot \nabla v(\bold{x})\, d\bold{x},
				\end{align*}
				and we observe that the associated norm $\Vert \cdot \Vert_{\breve{\mathbb{H}}}$ is equivalent to the $\Vert \cdot \Vert_{H^1(\Omega^-)}$ norm defined in Section \ref{sec:2}.
				
				\begin{lemma}\label{lem:Biject}
					The interior Dirichlet trace mapping $\gamma^- \colon\breve{\mathbb{H}} \rightarrow \breve{H}^{\frac{1}{2}}(\partial \Omega)$ and the interior Neumann trace operator $\gamma_N^- \colon \breve{\mathbb{H}} \rightarrow \breve{H}^{-\frac{1}{2}}(\partial \Omega)$ are both bijective, continuous linear operators.
				\end{lemma}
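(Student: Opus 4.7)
The plan is to establish each of the four assertions (continuity and bijectivity for both $\gamma^-$ and $\gamma_N^-$) separately, exploiting the fact that $\Omega^- = \cup_{i=1}^{N} \Omega_i$ is a disjoint union of open balls so that the interior Dirichlet and Neumann problems decouple sphere-by-sphere.

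First I would handle $\gamma^- \colon \breve{\mathbb{H}} \to \breve{H}^{\frac{1}{2}}(\partial \Omega)$. Continuity is immediate from the classical trace theorem applied on each ball $\Omega_i$, combined with the fact that $\Vert \cdot \Vert_{\breve{\mathbb{H}}}$ is equivalent to $\Vert \cdot \Vert_{H^1(\Omega^-)}$. Injectivity follows from uniqueness of the interior Dirichlet problem for the Laplacian: if $u \in \breve{\mathbb{H}}$ satisfies $\gamma^- u = 0$, then on each ball $\Omega_i$ one has a harmonic function with vanishing Dirichlet data, hence $u \equiv 0$. For surjectivity, given $\lambda \in \breve{H}^{\frac{1}{2}}(\partial \Omega)$ I would invoke the well-posedness of the interior Dirichlet problem on each ball to produce a harmonic lift $u_\lambda \in H^1(\Omega^-)$ with $\gamma^- u_\lambda = \lambda$; since $\lambda$ has zero mean on each sphere, $u_\lambda$ lies in $\breve{\mathbb{H}}$.

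Next I would tackle $\gamma_N^- \colon \breve{\mathbb{H}} \to \breve{H}^{-\frac{1}{2}}(\partial \Omega)$. Continuity is a direct consequence of Green's identity: for any $u \in \breve{\mathbb{H}}$ and $\varphi \in H^{\frac{1}{2}}(\partial \Omega)$, the pairing $\langle \gamma_N^- u, \varphi \rangle_{\partial \Omega}$ equals $\int_{\Omega^-} \nabla u \cdot \nabla (\mathcal{E}_\mathcal{H}^{\mathrm{int}} \varphi)\, d\mathbf{x}$, which is controlled by $\Vert u \Vert_{\breve{\mathbb{H}}} \Vert \varphi \Vert_{H^{\frac{1}{2}}(\partial \Omega)}$ via the continuity of the harmonic extension operator. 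Moreover, the range lies in $\breve{H}^{-\frac{1}{2}}(\partial \Omega)$ since constants are harmonic, so integrating by parts on each ball yields $\langle \gamma_N^- u, 1_{\partial \Omega_i}\rangle_{\partial \Omega_i} = 0$. Injectivity follows by testing $\gamma_N^- u = 0$ against $\gamma^- u$ itself: Green's identity gives $\Vert u \Vert_{\breve{\mathbb{H}}}^2 = 0$, so $u$ is piecewise constant on $\Omega^-$; but then the defining constraint $\gamma^- u \in \breve{H}^{\frac{1}{2}}(\partial \Omega)$ forces each constant to vanish.

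The hardest step will be surjectivity of $\gamma_N^-$. Given $\sigma \in \breve{H}^{-\frac{1}{2}}(\partial \Omega)$, I would solve the interior Neumann problem on each ball $\Omega_i$ with data $\sigma|_{\partial \Omega_i}$. The solvability condition $\langle \sigma, 1_{\partial \Omega_i}\rangle_{\partial \Omega_i} = 0$ is precisely the defining property of $\breve{H}^{-\frac{1}{2}}(\partial \Omega)$, and the resulting harmonic function $u_i$ is unique up to an additive constant in each ball. I would then pin down the additive constants by enforcing $\int_{\partial \Omega_i} \gamma^- u\, d\mathbf{x} = 0$ for every $i$, which ensures that the assembled function $u$ lies in $\breve{\mathbb{H}}$ and satisfies $\gamma_N^- u = \sigma$ by construction. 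Finally, I would observe that both $\gamma^-$ and $\gamma_N^-$ are bounded bijections between Banach spaces, so the bounded inverse theorem provides continuous inverses, closing the proof.
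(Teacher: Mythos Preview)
Your proposal is correct and is precisely the same approach as the paper's: the paper's proof consists of a single sentence invoking the well-posedness of the interior Dirichlet and Neumann problems for the Laplace equation on Lipschitz domains, and your argument simply unpacks that sentence in detail (injectivity from uniqueness, surjectivity from existence, with the zero-mean constraints matching the compatibility conditions for the Neumann problem).
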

				\begin{proof}
					The proof follows from the well-posedness of the interior Dirichlet and Neumann problems for the Laplace equation on Lipschitz domains.
				\end{proof}
				
				\noindent	\textbf{Notation:} We define $\mathcal{E} \colon \breve{H}^{\frac{1}{2}}(\partial \Omega) \rightarrow \breve{\mathbb{H}}$ as the inverse of the interior Dirichlet trace operator $\gamma^- \colon\breve{\mathbb{H}} \rightarrow \breve{H}^{\frac{1}{2}}(\partial \Omega)$.

				\begin{corollary}\label{cor:norm}
					Lemma \ref{lem:Biject} implies in particular that the interior trace operator $\gamma^- \colon\breve{\mathbb{H}} \rightarrow \breve{H}^{\frac{1}{2}}(\partial \Omega)$ is an isomorphism. It follows that we can define a new norm $\Vert \cdot \Vert_{\breve{H}^{\frac{1}{2}}(\partial \Omega)}$ on the space $\breve{H}^{\frac{1}{2}}(\partial \Omega)$ that is equivalent to the Sobolev-Slobodeckij norm defined in Section \ref{sec:2} by setting for all $\lambda \in \breve{H}^{\frac{1}{2}}(\partial \Omega)$
					\begin{align*}
					\Vert \lambda \Vert_{\breve{H}^{\frac{1}{2}}(\partial \Omega)} = \Vert \mathcal{E}\lambda \Vert_{\breve{\mathbb{H}}}.
					\end{align*}
				\end{corollary}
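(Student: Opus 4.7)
The plan is to view the claim as an immediate consequence of Lemma \ref{lem:Biject} combined with the bounded inverse theorem. Since $\breve{\mathbb{H}}$ is a Hilbert space with the inner product $(\cdot,\cdot)_{\breve{\mathbb{H}}}$ (stated already, so I will take it for granted), and $\breve{H}^{\frac{1}{2}}(\partial \Omega)$ equipped with the Sobolev--Slobodeckij norm is a Banach space, Lemma \ref{lem:Biject} tells us that $\gamma^- \colon \breve{\mathbb{H}} \to \breve{H}^{\frac{1}{2}}(\partial \Omega)$ is a continuous linear bijection between Banach spaces. Hence the bounded inverse theorem applies, so $\mathcal{E} = (\gamma^-)^{-1}$ is also a bounded linear operator.

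First I would check that $\Vert \lambda \Vert_{\breve{H}^{\frac{1}{2}}(\partial \Omega)} := \Vert \mathcal{E}\lambda \Vert_{\breve{\mathbb{H}}}$ really defines a norm. Homogeneity and the triangle inequality are inherited from linearity of $\mathcal{E}$ and the fact that $\Vert \cdot \Vert_{\breve{\mathbb{H}}}$ is a norm on $\breve{\mathbb{H}}$; positive definiteness uses the injectivity of $\gamma^-$ (so $\mathcal{E}\lambda=0 \iff \lambda = 0$).

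Next I would extract the two required estimates. From continuity of $\gamma^-$ there is $c_1>0$ with
\begin{align*}
\Vert \gamma^- u \Vert_{H^{\frac{1}{2}}(\partial \Omega)} \leq c_1 \Vert u \Vert_{\breve{\mathbb{H}}} \qquad \forall u \in \breve{\mathbb{H}},
\end{align*}
and substituting $u = \mathcal{E}\lambda$ gives $\Vert \lambda\Vert_{H^{\frac{1}{2}}(\partial \Omega)} \leq c_1 \Vert \mathcal{E}\lambda \Vert_{\breve{\mathbb{H}}}$. From continuity of $\mathcal{E}$ (obtained via the bounded inverse theorem) there is $c_2>0$ such that $\Vert \mathcal{E}\lambda\Vert_{\breve{\mathbb{H}}} \leq c_2 \Vert \lambda\Vert_{H^{\frac{1}{2}}(\partial \Omega)}$. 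Combining these two one-sided bounds yields
\begin{align*}
\tfrac{1}{c_1}\Vert \lambda\Vert_{H^{\frac{1}{2}}(\partial \Omega)} \leq \Vert \lambda \Vert_{\breve{H}^{\frac{1}{2}}(\partial \Omega)} \leq c_2 \Vert \lambda\Vert_{H^{\frac{1}{2}}(\partial \Omega)},
\end{align*}
which is the claimed norm equivalence.

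There is really no hard step here: all the substance of the argument lives in Lemma \ref{lem:Biject} (well-posedness and regularity of the interior Dirichlet problem on $\Omega^-$) and in the background fact that the $H^1$ semi-norm is a genuine norm on $\breve{\mathbb{H}}$ (a Poincaré-type observation based on the zero-average trace condition on each ball). Once those ingredients are in hand, the corollary is a one-line consequence of the open mapping theorem, so my only care in writing it out would be to state the two continuity bounds explicitly in the direction in which they are used.
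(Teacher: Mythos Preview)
Your argument is correct and matches the paper's treatment: the paper states the corollary without a separate proof, treating it as immediate from Lemma~\ref{lem:Biject}, and you have simply made explicit the standard bounded-inverse-theorem step that underlies the word ``isomorphism''. There is nothing to add.
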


				Lemma \ref{lem:Biject} also yields the following corollary.
				
				\begin{corollary}\label{cor:DtN}
					The Dirichlet-to-Neumann map $\text{\emph{DtN}} \colon \breve{H}^{\frac{1}{2}}(\partial \Omega) \rightarrow \breve{H}^{-\frac{1}{2}}(\partial \Omega)$ is a bijective operator.
				\end{corollary}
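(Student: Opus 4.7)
The plan is to realise the Dirichlet-to-Neumann map restricted to $\breve{H}^{\frac{1}{2}}(\partial \Omega)$ as the composition of two operators that have already been shown to be bijective in Lemma \ref{lem:Biject}. More precisely, I would write
\begin{align*}
\text{DtN}\big|_{\breve{H}^{\frac{1}{2}}(\partial \Omega)} = \gamma_N^- \circ \mathcal{E},
\end{align*}
where $\mathcal{E} \colon \breve{H}^{\frac{1}{2}}(\partial \Omega) \to \breve{\mathbb{H}}$ is the harmonic extension operator introduced right before the corollary, i.e., the inverse of $\gamma^- \colon \breve{\mathbb{H}} \to \breve{H}^{\frac{1}{2}}(\partial \Omega)$. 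This identification is immediate from the very definition of $\text{DtN}$ given in Section~\ref{sec:2a}, since for $\lambda \in \breve{H}^{\frac{1}{2}}(\partial \Omega)$ the unique harmonic function $u_\lambda \in \mathbb{H}(\Omega^-)$ with $\gamma^- u_\lambda = \lambda$ is nothing else than $\mathcal{E}\lambda$.

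First I would check that $\text{DtN}$ actually takes values in $\breve{H}^{-\frac{1}{2}}(\partial \Omega)$, i.e., that the map is well-defined between the stated spaces. For this I note that Lemma \ref{lem:Biject} explicitly states that $\gamma_N^- \colon \breve{\mathbb{H}} \to \breve{H}^{-\frac{1}{2}}(\partial \Omega)$ is a bijection; consequently $\gamma_N^- \mathcal{E}\lambda \in \breve{H}^{-\frac{1}{2}}(\partial \Omega)$ for every $\lambda \in \breve{H}^{\frac{1}{2}}(\partial \Omega)$. (Intuitively this is just the divergence theorem applied to the harmonic extension on each ball, which forces the Neumann trace to integrate to zero on each $\partial \Omega_i$.)

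Given that, bijectivity of $\text{DtN}$ now follows from an immediate two-line argument: by Lemma \ref{lem:Biject}, both $\mathcal{E} \colon \breve{H}^{\frac{1}{2}}(\partial \Omega) \to \breve{\mathbb{H}}$ and $\gamma_N^- \colon \breve{\mathbb{H}} \to \breve{H}^{-\frac{1}{2}}(\partial \Omega)$ are bijections, and the composition of two bijections is a bijection. There is no real obstacle in the proof; the entire work has been absorbed into Lemma \ref{lem:Biject}. The only mild subtlety worth stating explicitly is that the restriction to the subspaces with vanishing mean on each sphere is precisely what is needed for the Neumann problem to be well-posed on each ball (otherwise $\gamma_N^-$ would have a one-dimensional kernel per sphere consisting of constants, and its range would be constrained by the compatibility condition), which is why the corollary is stated on $\breve{H}^{\pm \frac{1}{2}}(\partial \Omega)$ rather than on the full $H^{\pm \frac{1}{2}}(\partial \Omega)$.
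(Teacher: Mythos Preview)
Your proposal is correct and is exactly the argument the paper intends: the corollary is stated immediately after Lemma~\ref{lem:Biject} and the definition of $\mathcal{E}$, with no separate proof given, precisely because $\text{DtN}\big|_{\breve{H}^{1/2}(\partial\Omega)} = \gamma_N^- \circ \mathcal{E}$ is a composition of the two bijections furnished by that lemma. Your additional remarks on well-definedness and on why the mean-zero restriction is needed are accurate and helpful elaborations of what the paper leaves implicit.
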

				
				\begin{remark}\label{rem:DtN}
					The Dirichlet-to-Neumann map $\text{DtN} \colon \breve{H}^{\frac{1}{2}}(\partial \Omega) \rightarrow \breve{H}^{-\frac{1}{2}}(\partial \Omega)$ yields an alternative characterisation of the norm $\Vert \cdot \Vert_{\breve{H}^{\frac{1}{2}}(\partial \Omega)}$. Indeed, let $u \in \breve{H}^{\frac{1}{2}}(\partial \Omega)$. Then Green's identity implies that
					\begin{align*}
					\Vert u\Vert^2_{\breve{H}^{\frac{1}{2}}(\partial \Omega)}&= \Vert \mathcal{E}u \Vert^2_{\breve{\mathbb{H}}}= \sum_{i=1}^N\int_{\Omega_i} \nabla \mathcal{E}u(\bold{x})\cdot \nabla \mathcal{E}u(\bold{x})\, d\bold{x} \\
					&= \sum_{i=1}^N\langle \text{DtN}u\vert_{\partial \Omega_i}, u\vert_{\partial \Omega_i} \rangle_{H^{-\frac{1}{2}}(\partial \Omega_i) \times H^{\frac{1}{2}}(\partial \Omega_i) }\\
					&=\langle \text{DtN}u, u \rangle_{H^{-\frac{1}{2}}(\partial \Omega) \times H^{\frac{1}{2}}(\partial \Omega) }.
					\end{align*}
				\end{remark}

				\begin{corollary}
					Combining Corollary \ref{cor:norm} and Remark \ref{rem:DtN}  yields that the norm $||| \cdot ||| \colon H^{\frac{1}{2}}(\partial \Omega) \rightarrow \mathbb{R}$ defined through Definition \ref{def:NewNorm} is indeed equivalent to the $\Vert \cdot \Vert_{H^{\frac{1}{2}}(\partial \Omega)}$ norm introduced in Section \ref{sec:2}.
				\end{corollary}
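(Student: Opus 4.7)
The plan is to exploit the complementary decomposition from Lemma \ref{lem:decomp} to split $|||\cdot|||^2$ cleanly into a piece on $\mathcal{C}(\partial\Omega)$ and a piece on $\breve{H}^{\frac{1}{2}}(\partial\Omega)$, and then apply Remark \ref{rem:DtN} and Corollary \ref{cor:norm} to each piece separately.

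The key observation is that the Dirichlet-to-Neumann map kills piecewise constants: on each ball $\Omega_i$, the harmonic extension of a constant is that constant, whose exterior normal derivative vanishes, so $\text{DtN}(\mathbb{P}_0\lambda)=0$ for every $\lambda\in H^{\frac{1}{2}}(\partial\Omega)$. Moreover, a simple application of the divergence theorem on each $\Omega_i$ to the harmonic extension of $\mathbb{P}_0^{\perp}\lambda$ shows that $\text{DtN}(\mathbb{P}_0^{\perp}\lambda)\in\breve{H}^{-\frac{1}{2}}(\partial\Omega)$ (its integral against any piecewise constant vanishes). Writing $\lambda=\mathbb{P}_0\lambda+\mathbb{P}_0^{\perp}\lambda$ and invoking the orthogonality identities of Remark \ref{rem:Review_3}, these two facts together give
\begin{align*}
\langle\text{DtN}\lambda,\lambda\rangle_{\partial\Omega}
=\langle\text{DtN}\mathbb{P}_0^{\perp}\lambda,\mathbb{P}_0^{\perp}\lambda\rangle_{\partial\Omega},
\end{align*}
so that, by Remark \ref{rem:DtN},
\begin{align*}
|||\lambda|||^2
=\Vert\mathbb{P}_0\lambda\Vert_{L^2(\partial\Omega)}^2+\Vert\mathbb{P}_0^{\perp}\lambda\Vert_{\breve{H}^{\frac{1}{2}}(\partial\Omega)}^2.
\end{align*}

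From here the proof is essentially book-keeping. On the finite-dimensional subspace $\mathcal{C}(\partial\Omega)$ the Slobodeckij double integral vanishes identically, so the Sobolev-Slobodeckij $H^{\frac{1}{2}}(\partial\Omega)$ norm coincides on the nose with the $L^2(\partial\Omega)$ norm, with no $N$-dependent constant. On $\breve{H}^{\frac{1}{2}}(\partial\Omega)$, Corollary \ref{cor:norm} already asserts that $\Vert\cdot\Vert_{\breve{H}^{\frac{1}{2}}(\partial\Omega)}$ and the Sobolev-Slobodeckij norm $\Vert\cdot\Vert_{H^{\frac{1}{2}}(\partial\Omega)}$ are equivalent. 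Finally, the boundedness of the projection operators $\mathbb{P}_0$ and $\mathbb{P}_0^{\perp}$ asserted in Lemma \ref{lem:decomp} allows us to pass from the sum $\Vert\mathbb{P}_0\lambda\Vert_{H^{\frac{1}{2}}(\partial\Omega)}^2+\Vert\mathbb{P}_0^{\perp}\lambda\Vert_{H^{\frac{1}{2}}(\partial\Omega)}^2$ back to the full Sobolev-Slobodeckij norm $\Vert\lambda\Vert_{H^{\frac{1}{2}}(\partial\Omega)}^2$, up to a universal constant. Chaining these equivalences produces the claimed two-sided bound.

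The subtlety that requires a small amount of care, and which is the only potential obstacle, is tracking that none of the equivalence constants depend on the number of balls $N$. This $N$-independence is essential for the rest of the paper and comes from three separate places: (i) on each $\partial\Omega_i$ the local Slobodeckij seminorm vanishes on constants, so the $\mathcal{C}(\partial\Omega)$ piece is handled sphere-by-sphere; (ii) Corollary \ref{cor:norm} is built from the harmonic extension operator on $\Omega^-$, which decouples over the disjoint balls $\{\Omega_i\}_{i=1}^N$, so the equivalence constant there only depends on the $r_i$ via a Poincar\'e-type inequality on each ball (and hence is uniform under assumption \textbf{A1}); (iii) the boundedness of $\mathbb{P}_0$ and $\mathbb{P}_0^{\perp}$ is stated in Lemma \ref{lem:decomp} and will be used as a black box. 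Once these three inputs are in place, the assembly above yields the desired norm equivalence with a constant independent of $N$, which is precisely what is needed for the remark immediately following Definition \ref{def:NewNorm}.
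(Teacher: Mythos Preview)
Your proof is correct and follows essentially the same approach as the paper: the paper's corollary is stated without proof as an immediate consequence of Corollary~\ref{cor:norm} and Remark~\ref{rem:DtN}, and your argument simply spells out the details of that combination via the decomposition $\lambda=\mathbb{P}_0\lambda+\mathbb{P}_0^{\perp}\lambda$ together with the observation that $\text{DtN}$ annihilates piecewise constants. Your additional discussion of why the equivalence constant is $N$-independent goes beyond what the appendix proves explicitly but correctly anticipates what is needed for Remark~\ref{rem:New}.
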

				
				\newpage
				\section{Proof of Lemma \ref{lem:equivalence}}\label{sec:Appendix_C}
				\begin{proof}
					Let $\bold{\Phi}:= (\Phi^-, \Phi^+) \in \mathbb{H}(\Omega^-) \times \mathbb{H}(\Omega^+)$ be a solution to the transmission problem \eqref{eq:3.2}. It follows from Green's representation theorem (see, e.g., \cite[Theorem 3.1.6]{Schwab}) that for each $s \in \{+, -\}$ it holds that
					\begin{align*}
					\Phi^s = \mathcal{S} \left(\gamma_N^- \Phi^--\gamma_N^+ \Phi^+ \right) \vert_{\Omega^s}.
					\end{align*}
					
					It follows from the hypothesis of the transmission problem \eqref{eq:3.2} that
					\begin{align*}
					-\gamma_N^+ \Phi^+=\frac{4\pi}{\kappa_0}\sigma_f-\frac{\kappa}{\kappa_0}\gamma_N^- \Phi^-,
					\end{align*}
					so that
					\begin{align*}
					\gamma^-\Phi^- &= \gamma^-\mathcal{S} \left(\gamma_N^- \Phi^-+\frac{4\pi}{\kappa_0}\sigma_f-\frac{\kappa}{\kappa_0}\gamma_N^- \Phi^-\right)\\
					&=\mathcal{V} \left(\frac{1}{\kappa_0}\left(\kappa_0\gamma_N^- \Phi^- - \kappa\gamma_N^- \Phi^-\right) + \frac{4\pi}{\kappa_0}\sigma_f\right) \\
					&=\mathcal{V} \left(\frac{\kappa_0 -\kappa}{\kappa_0}\gamma_N^- \Phi^-\right) + \frac{4\pi}{\kappa_0}\mathcal{V}\sigma_f\\
					&=\mathcal{V} \left(\frac{\kappa_0 -\kappa}{\kappa_0}\text{DtN} \gamma^-\Phi^-\right) + \frac{4\pi}{\kappa_0}\mathcal{V}\sigma_f.
					\end{align*}
					
					Define $\nu:=\mathcal{V}^{-1}\gamma^- \Phi^-$ and use the fact that $\mathcal{V}^{-1} \colon H^{\frac{1}{2}}(\partial \Omega) \rightarrow H^{-\frac{1}{2}}(\partial \Omega)$ is a bijection to obtain that
					\begin{align*}
					\nu = \Big(\frac{\kappa_0-\kappa}{\kappa_0}\text{DtN}\mathcal{V}\nu\Big)+ \frac{4\pi}{\kappa_0}\sigma_f.
					\end{align*}
					This completes the first part of the proof. 
					
					For the converse, let $\nu \in H^{-\frac{1}{2}}(\partial \Omega)$ be a solution to the BIE \eqref{eq:3.3a}. It follows from the jump properties of the single layer potential (see, e.g., \cite[Theorem 3.3.1]{Schwab}) that
					\begin{align*}
					\nu=\gamma_N^- \mathcal{S}\nu -\gamma_N^+ \mathcal{S}\nu.
					\end{align*}
					
					Define $(\Phi^-, \Phi^+) = \big(\mathcal{S}\nu\vert_{\Omega^-}, \mathcal{S}\nu\vert_{\Omega^+}\big)$. The definition of the single layer potential implies that we need only check the jump condition for the normal derivative. We observe that
					\begin{align*}
					\kappa\gamma_N^-\Phi^- - \kappa_0 \gamma_N^+ \Phi^+&= \kappa\text{DtN}\gamma^-\Phi^- + \kappa_0 \nu - \kappa_0 \text{DtN}\gamma^-\Phi^-\\
					&=(\kappa-\kappa_0)\text{DtN}\gamma^-\Phi^- + \kappa_0 \nu.
					\end{align*}
					
					It follows from the hypothesis of BIE \eqref{eq:3.3a} that
					\begin{align*}
					\kappa\gamma_N^-\Phi^- - \kappa_0 \gamma_N^+ \Phi^+=(\kappa-\kappa_0)\text{DtN}\gamma^-\Phi^- + \kappa_0 \nu={4\pi}\sigma_f.
					\end{align*}
					
					This completes the proof.
				\end{proof}
			\end{appendix}
			
		\end{document}